\let\oldnl\nl
\newcommand{\nonl}{\renewcommand{\nl}{\let\nl\oldnl}}
\newcommand{\R}{\mathbb{R}}
\newcommand{\f}{\mathbb}
\newcommand{\ol}{\overline}
\newcommand{\cu}{\subseteq}
\DeclareMathOperator{\vect}{vec} 
\DeclareMathOperator{\mat}{mat} 
\DeclareMathOperator{\conv}{conv}
\DeclareMathOperator{\cone}{cone}
\DeclareMathOperator{\logdet}{logdet}
\newtheorem{theorem}{Theorem}[section]
\newtheorem{corollary}{Corollary}[theorem]
\newtheorem{lemma}[theorem]{Lemma}
\newtheorem{property}[theorem]{Property}
\newtheorem{definition}[theorem]{Definition}
\newtheorem{remark}[theorem]{Remark}
\newtheorem{assumptions}[theorem]{Assumption} 
\newtheorem{openquestion}[theorem]{Question}
\title{Identifiability of Nonnegative Tucker Decompositions \\
-- \\
Part I: Theory} 
\author{Subhayan Saha, Giovanni Barbarino, Nicolas Gillis\thanks{Email: \{subhayan.saha, giovanni.barbarino, nicolas.gillis\}@umons.ac.be. The authors acknowledge the support
 by the European Union (ERC consolidator, eLinoR, no 101085607).  GB is member of the Research Group GNCS (Gruppo Nazionale per il Calcolo Scientifico) of INdAM (Istituto Nazionale di Alta Matematica). 
 } \\ 
University of Mons, Rue de Houdain 9, 7000 Mons, Belgium   
	}
\date{}
\definecolor{brightpink}{rgb}{1.0, 0.0, 0.5}
\newcommand{\ngc}[1]{{\color{brightpink} (\textbf{NG:} #1)}}
\newcommand{\gbc}[1]{{\color{blue} (\textbf{GB:} #1)}}
\definecolor{blues}{rgb}{0.0, 0.7, 0.7}
\begin{document}

\maketitle

\begin{abstract}
Tensor decompositions have become a central tool in data science, with applications in areas such as data analysis, signal processing, and machine learning. A key property of many tensor decompositions, such as the canonical polyadic decomposition, is identifiability: the factors are unique, up to trivial scaling and permutation ambiguities. This allows one to recover the groundtruth sources that generated the data. The Tucker decomposition (TD) is a central and widely used tensor decomposition model. However, it is in general not identifiable. In this paper, we study the identifiability of the nonnegative TD (nTD). By adapting and extending identifiability results of nonnegative matrix factorization (NMF), we provide uniqueness results for nTD. Our results require the nonnegative matrix factors to have some degree of sparsity (namely, satisfy the separability condition, or the sufficiently scattered condition), while the core tensor only needs to have some slices (or linear combinations of them) or unfoldings with full column rank (but does not need to be nonnegative). Under such conditions, we derive several procedures, using either unfoldings or slices of the input tensor, to obtain identifiable nTDs by minimizing the volume of unfoldings or slices of the core tensor. 
\end{abstract}













  













  

\section{Introduction}\label{sec:introduction}

Tensors are important algebraic objects that appear in different branches of science such as mathematics, physics, computer science and chemistry. For our purpose, tensors can be viewed as multi-dimensional arrays with entries from the underlying field of real numbers~$\R$. Following this, order-$1$ tensors are vectors and order-$2$ tensors are matrices. For a tensor $\mathcal T \in \R^{n_1 \times n_2 \times \dots \times n_d}$,  
its order is the number of dimensions, $d$, also referred to as \textit{modes}, while the dimension of the $k$th mode is $n_k$ for $k \in [d] := \{1,2,\dots,d\}$.  
\par
Tensor decompositions have generated significant interest in recent years due to their applications in different fields such as signal processing, computer vision, chemometrics, neuroscience and others; see, e.g.,  \cite{Kolda2009TensorDA, sidiropoulos2017tensor} for comprehensive surveys, and \cite{ballard2025tensor} for a very recent book.  
One of the central motivations for studying such decompositions and applying them in real-world problems comes from the fact that some models of tensor decompositions are identifiable under very mild assumptions. 
One such model is the canonical polyadic decomposition (CPD) that decomposes a given tensor as the  \textit{minimal} sum of \textit{rank-one} tensors. Such a decomposition is essentially unique, meaning that is is unique up to permutations and scalings of the rank-one factors, 
if the input tensor satisfies some \textit{mild} conditions \cite{KRUSKAL77,domanov2016,koiran2024overcomplete,kothari2024overcompletete}. As a consequence, there is a lot of research on developing algorithms for the CPD model~\cite{bader2015matlab, vervliet2016tensorlab, huang2016flexible, kossaifi2019tensorly, fu2020computing,koiransaha2025,koiran2024undercomplete}.

In this paper, we focus on another widely-used model of tensor decomposition called the Tucker decomposition (TD) which is a generalization of the CPD. 
It is a form of \textit{higher-order PCA} which decomposes a tensor into a core tensor and multiple matrices such that the given tensor can be obtained by transforming the core tensor by a matrix along each mode. 
\par
Let us focus on order-$3$ tensors for simplicity. 
Given an order-$3$ tensor $\mathcal{G} \in \R^{r_1 \times r_2 \times r_3}$ and matrices $U_i \in \R^{n_i \times r_i}$ for all $i \in [3]$, the following multilinear transformation
\begin{equation} \label{eq:Tuckerdef}
\mathcal T = (U_1,U_2,U_3).\mathcal{G} \quad \in \quad \R^{n_1 \times n_2 \times n_3}, 
\end{equation}
is defined as 
\begin{equation}\label{eq:TuckerDecdefn}
    \mathcal \mathcal T_{j_1,j_2,j_3} = \sum_{i_t \in [r_t] \text{ for all } t \in [3]} (U_1)_{j_1,i_1}(U_2)_{j_2,i_2}(U_3)_{j_3,i_3} \mathcal{G}_{i_1,i_2,i_3}.
\end{equation}
The decomposition $\mathcal T = (U_1,U_2,U_3) . \mathcal{G}$ is 
called an $(r_1,r_2,r_3)$-TD of $\mathcal T$.
Note that for order-$2$ tensors, that is, for matrices, an $(r_1,r_2)$-TD corresponds to a matrix tri-factorization of the form 
\begin{equation}\label{eq:triNMF}
   X = U_1 G U_2^\top,  
\end{equation}
where $U_1 \in \R^{n_1 \times r_1}$, $U_2 \in \R^{n_2 \times r_2}$, and $G \in \R^{r_1 \times r_2}$ is the core matrix. 

CPD is a special case of TD when $r_i = r$ for all $i$ while $\mathcal{G}$ is a diagonal tensor, that is, $\mathcal{G}_{i_1,i_2,\dots,i_d} \neq 0$ only when $i_1=i_2=\dots=i_d$. 
In contrast to CPD, TD of an arbitrary tensor is not essentially unique as it suffers from rotation ambiguities. 
In fact, for an $(r_1,r_2,r_3)$-TD of $\mathcal T = (U_1,U_2,U_3).\mathcal{G}$,   and for any invertible matrices 
$R_i \in \R^{r_i \times r_i}$  
for $i \in [3]$, $(U_1R_1,U_2R_2,U_3R_3) . \mathcal{G}'$ is another $(r_1,r_2,r_3)$-TD for $\mathcal T$ where $\mathcal{G}' = (R_1^{-1},R_2^{-1},R_3^{-1}).\mathcal{G}$.
Hence, one needs to impose additional constraints or regularization on the TD in order to get uniqueness.

\paragraph{Outline and contribution of the paper} 

In this paper, we focus on nonnegativity constraints on the $U_i$'s. 
We refer to such decompositions as nonnegative Tucker decompositions (nTDs). 
We will use NTD, as done in the literature, only when the core tensor, $\mathcal G$, is also imposed to be nonnegative. However, in most of our results, we will not need this assumption. 
If a given tensor has an nTD of a certain dimension, we are interested in the following two questions: 
\begin{itemize}
    \item Under what conditions is such a decomposition \textit{unique}? 
    
    \item How can we (approximately) \textit{compute} such a decomposition? 
    
\end{itemize} 
Part I of the paper, ``Identifiability of nTD: Theory'', focuses on the first question, and is divided in 7 sections: 
\begin{itemize}

\item Section~\ref{sec:notation} presents the notation, definitions and the preliminary results necessary to present our contributions; in particular, it presents identifiability results of NMF that are the starting point of our work.  

\item Section~\ref{sec:previousworks} summarizes previous works on the identifiability of nTDs. At the end of this section, we summarize our main identifiability results; see Table~\ref{tab:summary3}. 
(Skipping directly to Table~\ref{tab:summary3}, page~\pageref{tab:summary3}, allows the  interested reader familiar with the tensor notation to have a quick overview of our results.)

    \item Section~\ref{sec:NMF} deals with order-2 nTDs, that is, nonnegative matrix tri-factorizations; see~\eqref{eq:triNMF}. We generalize identifiability results from the NMF literature to order-2 nTDs. Identifiability is obtained via sparsity conditions on the factors. Our first result, Theorem~\ref{th:serptrisymNMF}, relies on the separability condition (Definition~\ref{def:separablematricesfirst}) which requires the factors to contain a diagonal matrix as a submatrix. 
    Our second result, Theorem~\ref{thm:nmfmain}, relies on the sufficiently scattered condition (the SSC, Definition~\ref{def:SSCalternate}), a relaxation of separability, which requires the factors to contain rows sufficiently spread in the nonnegative orthant, and allows one to retrieve the groundtruth factors using volume minimization of the core matrix.

    \item Section~\ref{sec:idenorder3tensors} deals with order-$3$ nTDs. 
    We propose 5 procedures to compute order-$3$ nTDs, with identifiability guarantees 
    (Theorems~\ref{thm:order2unfold}, \ref{thm:idea1order3}, \ref{thm:idea2order3}, \ref{thm:idea3order3}, \ref{thm:idea4order3}). 
    They rely on the SSC for the factors, but use different conditions on the core tensor, $\mathcal G$. 
    The proofs rely on the identifiability of NMF (Section~\ref{sec:NMFprelimin}), and of order-2 nTDs (Section~\ref{sec:NMF}). 
    The first procedure relies on unfoldings of the input tensor, while the other four on its slices. 

    \item 
    Section~\ref{app:orderd} generalizes the results for order-$3$ from Section~\ref{sec:idenorder3tensors} to any order. 

\item In Sections~\ref{sec:idenorder3tensors} and~\ref{app:orderd}, we sometimes need that the Kronecker product of two or more factors  satisfies the SSC. 
In Section~\ref{subsec:KronSSCconj}, we explore sufficient conditions for this property to hold. 
    
\end{itemize} 

In this paper, we only consider identifiability result for exact decompositions. Robustness analysis in the presence of noise is a study of further research.  
Part II of this paper~\cite{sahaPartII2025} will focus on the second question: it will provide efficient algorithms to compute identifiable nTDs, and illustrate their effectiveness on synthetic and real data sets.

\section{Notation, definitions and preliminaries} \label{sec:notation}

Let us introduce the notation, definitions, and results from the literature used in this paper.

\subsection{Tensors}

We first fix the notion of uniqueness that we consider in this paper. 
\begin{definition}[Essential Uniqueness of TD]
Let $\mathcal T\in \R^{n_1 \times n_2 \times \dots \times n_d}$ be an order-$d$ tensor which has an $(r_1,\dots,r_d)$-TD of the form $\mathcal T = (U^{\#}_1,\dots,U^{\#}_d).\mathcal{G}^{\#}$. 
Let $f$ be an objective function and $\mathcal{S}$ be a feasible set for 
$(U_1,\dots,U_d,\mathcal{G})$, and define the optimization problem \begin{equation}\label{eq:metaopt}
    \min_{U_1,\dots,U_d,\mathcal{G}} f(U_1,\dots,U_d,\mathcal{G}) 
     \quad \text{such that } \quad 
     (U_1,\dots,U_d,\mathcal{G}) \in \mathcal{S}. 
\end{equation} 
Note that $f$ and $\mathcal{S}$ will depend on $\mathcal T$. 
Then the decomposition $\mathcal T = (U^{\#}_1,\dots,U^{\#}_d).\mathcal{G}^{\#}$ with $(U^{\#}_1,\dots,U^{\#}_d,\mathcal{G}^{\#}) \in \mathcal S$ is an essentially unique TD with respect to $(f,\mathcal{S})$ if for any optimal solution of~\eqref{eq:metaopt}, $(U^*_1,\dots,U^*_d,\mathcal{G}^*)$, there exist permutation matrices $\Pi_i$ and diagonal matrices $D_i$ such that $U^{\#}_i = U_i^*\Pi_i D_i$ for all $i \in [d]$ and $\mathcal{G}^{\#} =(D_1^{-1} \Pi^\top_1,\dots,D_d^{-1}  \Pi_d^\top).\mathcal{G}^*$. For simplicity, we will also say that $\mathcal T = (U^{\#}_1,\dots,U^{\#}_d).\mathcal{G}^{\#}$ is essentially unique w.r.t.\ \eqref{eq:metaopt}. 
\end{definition}

Since we consider nTDs in this paper, we will always have 
\[
\mathcal{S} \; \subseteq \; \{ (U_1,\dots,U_d,\mathcal{G}) \ | \ U_i \geq 0 \text{ for } i \in [d] \}. 
\]

Let us now define slices and unfoldings of a tensor.  
\begin{definition}[Slices of an order-$3$ tensor]\label{def:slices}
Let $\mathcal T\in \mathbb{R}^{n_1 \times n_2 \times n_3}$ be an order-$3$ tensor. Then the $n_3$ slices along the third mode, denoted by $\mathcal T^{(3)}_k \in \mathbb{R}^{n_1 \times n_2}$ for $k \in [n_3]$, are matrices given by
\begin{equation}
    (\mathcal T^{(3)}_k)_{i,j} = \mathcal T_{i,j,k} \quad \text{ for all } i,j. 
\end{equation}
One can similarly define the slices along the first and the second modes and denote 
them by $\mathcal T^{(1)}_i \in \mathbb{R}^{n_2 \times n_3}$ 
and $\mathcal T^{(2)}_j \in \mathbb{R}^{n_1 \times n_3}$, respectively. 
\end{definition}    
\begin{definition}[Unfoldings of an order-$3$ tensor]
    Let $\mathcal T\in \mathbb{R}^{n_1 \times n_2 \times n_3}$ be an order-$3$ tensor. Then the unfolding of $\mathcal T$ along the third mode is given by the  matrix $\mathcal T_{(3)} \in \mathbb{R}^{n_1n_2 \times n_3}$ where 
    \[
    (\mathcal T_{(3)})_{(i,j),k} = \mathcal T_{i,j,k}, \quad 
     \text{ for all } i,j,k, 
    \] 
    where  we use the notation $(i,j)= i + (j-1) n_1$. 
One can similarly define the unfoldings along the first and the second modes  and denote them by $\mathcal T_{(1)} \in \mathbb{R}^{n_2 n_3 \times n_1}$ and $\mathcal T_{(2)} \in \mathbb{R}^{n_1 n_3 \times n_2}$, respectively. 
\end{definition} 

These notions can be extended to higher-order tensor; see Section~\ref{app:orderd}.  


Let us define the Kronecker products between two matrices. 
\begin{definition}[Kronecker products]\label{def:KronPdt}
Let $A \in \R^{m \times M}$ and $B \in \R^{n \times N}$ be matrices with columns $a_1,\dots,a_M$ and $b_1,\dots,b_N$, respectively. Then the Kronecker product between $A$ and $B$, denoted by $A \otimes B$, is the $mn \times MN$ matrix with column at position $p = j + (i-1)N$ given by   
\[
(A \otimes B)_{:,p} = 
a_i \otimes b_j = \vect(a_i b_j^\top) =
[a_i b_j(1); a_i b_j(2); \dots; a_i b_j(n)]  \in \mathbb{R}^{mn}, 
\] 
where $\vect$ vectorizes a matrix column wise. 
\end{definition}


\subsection{NMF: concepts and identifiability} \label{sec:NMFprelimin}

To prove our identifiability results for nTD, we will heavily rely on tools and results from the NMF literature. Recall that, in the exact case, NMF is the following problem: 
given a nonnegative matrix $X \in \mathbb{R}^{m \times n}_+$ and a factorization rank $r$, find $W \in \mathbb{R}^{m \times r}_+$ and $H \in \mathbb{R}^{n \times r}_+$ such that $X = WH^\top$. 
Note that NMF decompositions are typically not unique unless additional constraints or regularization are used; we refer the interested reader to the survey~\cite{xiao2019uniq} and~the book~\cite[Chapter~4]{Gil20} for more details.

\paragraph{Cones} For any matrix $A \in  \R^{m \times n}$, the convex cone generated by the columns of $A$, denoted $\cone(A)$, is given by the set of \textit{conic} combinations, that is, linear combinations with nonnegative weights, of the columns of $A$: 
$$
\cone(A) = \{x\ |\ x = Ay \text{ for } y \in \R^n, y \geq 0\}.
$$ 
The dual cone of $A$, denoted by $\cone^*(A)$, is given by
\begin{align}
    \cone^*(A) 
    & = 
    \left\{ y \ |\ x^{\top}y \geq 0 \text{ for all } x \in \cone(A)   \right\} \nonumber \\ 
    & = 
    \left\{ y \ |\ z^{\top} A^\top y \geq 0 \text{ for all } z \geq 0   \right\} 
    = 
    \left\{y \ |\ A^{\top}y \geq 0 \right\}.\label{eq:dualcone} 
\end{align}

\paragraph{Separability} The notion of separability dates back to the hyperspectral community under the pure-pixel assumption~\cite{boardman1995mapping}. 
The terminology was introduced by Donoho and Stodden~\cite{donoho2004does}, and it was later used by Arora et al.~\cite{AGKM11} to obtain unique and polynomial-time solvable NMF problems; see~\cite[Chapter~7]{Gil20} for a survey on separable NMF. 
\begin{definition}\label{def:separablematricesfirst}
A matrix $H \in \R^{n \times r}_+$ is called separable if there exists an index set $\mathcal{K} \subseteq [n]$  where $|\mathcal{K}| = r$ such that $H(\mathcal{K},:)$ is a diagonal matrix with positive diagonal elements. 
\end{definition}
Equivalently, a matrix $H$ is \textit{separable} if the convex cone generated by its rows spans the entire nonnegative orthant, that is, $\cone(H^\top) = \mathbb{R}^r_+$; see Figure~\ref{fig:sepSSC} for an illustration  when $r=3$.  

 We say that $X$ admits a separable NMF $(W,H)$ of size $r$ if there exists a decomposition of $X$ of the form $X = WH^\top$ of size $r$ such that $H \in \mathbb{R}^{n \times r}$ is a separable matrix. 
 By the scaling degree of freedom, this implies that there exists a decomposition $X = W'H'^\top$ where $W' = X(:,\mathcal{K})$, that is, the columns of $W$ are a subset of the columns of $X$. 

\begin{theorem}\label{thm:idenseparableNMF}\cite{AGKM11}
    Let $X = WH^\top \in \R^{m \times n}$ be a separable NMF of size\footnote{This can be relaxed to $\cone(X)$ having  $r$ rays.  
    We use the rank here for simplicity. Note also that $W$ does not need to be nonnegative; see Remark~\ref{rem:nonneg}. \label{footnote:rays}} 
    $r = \rank(X)$. 
    Then for any other separable NMF $(W_*,H_*)$ of size $r$ of $X$,  there exists a permutation matrix $\Pi$ and diagonal matrix $D$ such that $W_* = W D \Pi$ and $H_* = H D^{-1} \Pi$.
\end{theorem}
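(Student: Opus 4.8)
The plan is to reduce the whole statement to a fact about the extreme rays of the cone $\cone(X)$ generated by the columns of $X$. The guiding observation is that separability forces the columns of the first factor to appear, up to positive scaling, among the columns of $X$ itself; consequently $\cone(X)$ is an intrinsically determined simplicial cone, and \emph{any} separable NMF must expose its generators, which then pins the factors down up to permutation and scaling. Before anything else I would do the rank bookkeeping: from $X = WH^\top$ with $W \in \R^{m \times r}$ and $H \in \R^{n \times r}$ one gets $\rank(X) \leq \min(\rank(W),\rank(H)) \leq r$, so the hypothesis $r = \rank(X)$ forces both $W$ and $H$ to have full column rank $r$, and likewise for $(W_*,H_*)$. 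This full-rank property is what I will lean on repeatedly.

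Next I would establish the cone identity $\cone(X) = \cone(W)$. The inclusion $\cone(X) \subseteq \cone(W)$ is immediate: since $H \geq 0$, each column $X(:,j) = \sum_k H_{jk}\, W(:,k)$ is a conic combination of the columns of $W$. For the reverse inclusion I would invoke the separability of $H$ (Definition~\ref{def:separablematricesfirst}): there is an index set $\mathcal{K} = \{k_1,\dots,k_r\}$ with $H(\mathcal{K},:)$ diagonal and strictly positive, so that $X(:,k_i) = W\, H(k_i,:)^\top = d_i\, W(:,i)$ with $d_i > 0$. Hence every column of $W$ is a positive rescaling of a column of $X$, giving $\cone(W) \subseteq \cone(X)$. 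Running the identical argument with $H_*$ yields $\cone(X) = \cone(W_*)$, and therefore $\cone(W) = \cone(W_*)$.

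The heart of the proof is then a simplicial-cone uniqueness statement: a cone generated by $r$ linearly independent vectors is pointed and simplicial, its extreme rays are exactly the $r$ rays through those generators, and any minimal generating set consists of one representative per extreme ray (unique up to positive scaling). Applying this to $\cone(W) = \cone(W_*)$, both generated by full-column-rank matrices with exactly $r$ columns, the columns of $W_*$ must coincide with the columns of $W$ up to positive scaling and a permutation, i.e.\ $W_* = W D \Pi$ for a positive diagonal $D$ and permutation $\Pi$. Finally I would recover $H_*$ by substituting into $W H^\top = W_* H_*^\top = W D \Pi H_*^\top$ and cancelling $W$ on the left using its left inverse $(W^\top W)^{-1} W^\top$; this gives $H^\top = D \Pi H_*^\top$, hence $H_* = H D^{-1}\Pi$, matching the claimed form.

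The step I expect to require the most care is the extreme-ray argument, precisely because the footnote allows $W$ (and hence $W_*$) to have negative entries, so the usual NMF intuition of a cone sitting inside the nonnegative orthant is unavailable. The remedy is that linear independence of the columns alone already guarantees the cone is pointed and simplicial (any line in the cone would force a nontrivial vanishing conic combination, contradicting independence), so the uniqueness of the minimal generating set still holds with mixed-sign generators. Verifying this robustness, and being careful that separability is used only to place the generators of $\cone(W)$ among the columns of $X$ rather than to assume nonnegativity of $W$, is where the argument must be stated cleanly.
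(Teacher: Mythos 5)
Your proposal is correct and follows exactly the route the paper indicates: the paper's own justification is the one-line remark that the extreme rays of $\cone(X)$ coincide with those of $\cone(W)$ under separability, and your argument ($\cone(X)=\cone(W)=\cone(W_*)$ via the separability index sets, followed by the fact that a simplicial cone's extreme rays are precisely its $r$ linearly independent generators, and finally cancellation of $W$ by its left inverse to recover $H_*$) is a complete and careful expansion of that same idea. Your attention to the footnote issue --- that pointedness and the extreme-ray characterization need only linear independence, not nonnegativity of $W$ --- is a correct and worthwhile clarification consistent with Remark~\ref{rem:nonneg}.
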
 
The proof of Theorem~\ref{thm:idenseparableNMF} directly follows from the fact that the extreme rays of $\cone(X)$ coincide with that of $\cone(W)$ under the separability assumption. 
Given a matrix $X$ and rank~$r$, there exist fast and robust polynomial-time algorithms to compute a separable NMF: they identify the extreme rays of $\cone(X)$~\cite[Chapter~7]{Gil20}; 
we will generalize such algorithms to order-2 nTDs in Part II of this paper~\cite{sahaPartII2025}. 


\paragraph{The sufficiently scattered condition (SSC)} The separability assumption is relatively strong. To relax it, a crucial notion is the sufficiently scattered condition (SSC), which was introduced in~\cite{huang2013non}. 

\begin{definition}\label{def:SSCalternate}[Sufficiently scattered condition (SSC)\footnote{Slight variants of the SSC exist in the literature. Refer to Section 4.2.3.1 in \cite{Gil20} for a more detailed description and the relation between these variants.}]
A matrix $H \in \R_+^{n \times r}$ with $r\ge 2$ satisfies the SSC if the following two conditions hold:  
\begin{enumerate}
    \item SSC1: $\mathcal{C} = \{x \in \R_+^r \ | \ e^{\top}x \geq \sqrt{r-1} \|x\|_2 \} \subseteq \cone(H^\top)$, where $e$ is the vector of all ones of appropriate dimension. 
    
    \item SSC2: $\cone^*(H^\top) \cap \textbf{bd}(\mathcal{C}^*) =  \{\lambda e_k  \ | \  \lambda \geq 0  \text{ and } k \in [r]\}$, where $\textbf{bd}$ denotes the border of a cone,  
    $\mathcal{C}^* = \{x \in \R^r  \ | \ e^{\top}x \geq \|x\|_2 \}$, and $e_k$ is the $k$th unit of appropriate dimension. 
\end{enumerate}
\end{definition}
SSC1 requires that $\cone(H^\top)$ contains the ice-cream cone that is tangent to every facet of the nonnegative orthant; see Figure~\ref{fig:sepSSC} for an illustration. Equivalently, by using the definition of dual cone \eqref{eq:dualcone}, one can rewrite the condition in SSC1 as 
    $
    \cone^*(H^\top)\cu \mathcal C^*
    $.
SSC2 is typically satisfied if SSC1 is, and allows one to avoid pathological cases; see~\cite[Chapter 4.2.3]{Gil20} for more details. 
\begin{figure}[ht!]
	\begin{center}
		\includegraphics[width=0.8\textwidth]{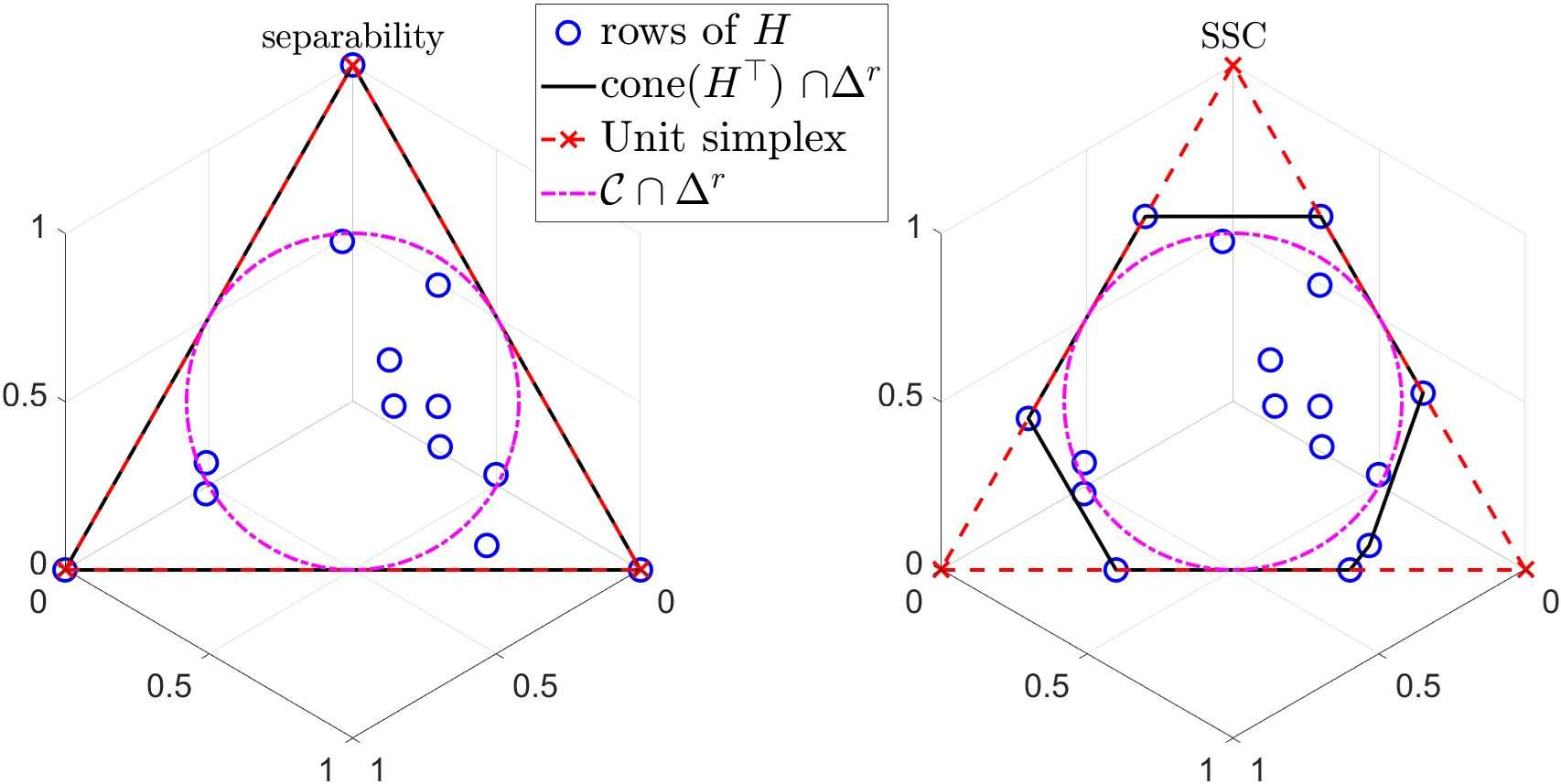}   
		\caption{Comparison of separability (left) and the SSC (right) for the matrix $H$ whose columns lie on the probability simplex, $\Delta^r = \{ x \ | \ x \geq 0, e^\top x = 1 \}$, in the case $r = 3$. 
			On the left, separability requires the columns of $H$ to contain the unit vectors, that is, $H(:,\mathcal{K}) = I_r$ for some $\mathcal{K}$.
			On the right, the SSC requires $\mathcal{C} \subseteq  \cone(H)$. 
			 Figure adapted from~\cite{abdolali2021simplex}. \label{fig:sepSSC}
   }  
	\end{center}
\end{figure} 

 Note that separability implies the SSC, that separability and the SSC coincide when $r \leq 2$, and that the SSC implies that a matrix is full rank.


To leverage the SSC, we need to notion of volume. 
Given a matrix $W \in \mathbb{R}^{m \times r}$ with $\rank(W) = r$, the quantity $\det(W^\top W)$ is a measure of the volume of the columns of $W$; more precisely, it is the volume of the convex hull of the columns of $W$ and the origin in the linear subspace
spanned by the columns of $W$. 
Combining volume minimization of $W$ under the SSC of $H$ leads to unique NMFs.  
In fact, let us consider the following optimization problem, referred to a minimum-volume (min-vol) NMF:  
\begin{equation}\label{eq:optminvol} 
    \min_{W \in \mathbb{R}^{m \times r}, H \in \mathbb{R}^{n \times r}} \; \det(W^\top W) 
    \quad \text{such that } 
    \quad X = W H^\top, 
    H^\top e = e,  
     \text{ and } H \geq 0. 
\end{equation}
We have the following identifiability result. 
\begin{theorem}[Identifiability of min-vol NMF]\cite{fu2018identifiability}\label{thm:idenminvol}
Let $X \in \mathbb R^{m \times n}$ admit the decomposition  $X = W_{\#} H_{\#}^{\top}$ where $H_{\#} \in \R^{r \times n}_+$ satisfies the \textit{SSC} and $r = \rank(X)$. 
Then $X = W_{\#} H_{\#}^{\top}$ is essentially unique w.r.t.~\eqref{eq:optminvol}.  
\end{theorem}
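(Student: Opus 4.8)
The plan is to reduce~\eqref{eq:optminvol} to a determinant inequality for a single $r\times r$ change-of-basis matrix and then to read off the groundtruth as the essentially unique maximizer. Throughout I assume, without loss of generality, that the groundtruth is normalized so that $H_\#^\top e = e$; this is harmless, since rescaling the rows of $H_\#$ by positive constants leaves $\cone(H_\#^\top)$ (hence the SSC) unchanged and can be absorbed into $W_\#$, and it makes the groundtruth feasible for~\eqref{eq:optminvol}.

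First I would fix the degrees of freedom. Let $(W,H)$ be any feasible point of~\eqref{eq:optminvol}. Since $r=\rank(X)$ and $X=WH^\top$ with $W\in\R^{m\times r}$, both $W$ and $H$ must have full column rank $r$, so the column space of $W$ equals that of $X$, which equals that of $W_\#$. Hence there is a unique invertible $A\in\R^{r\times r}$ with $W=W_\# A^{-1}$, and substituting back into $X=W_\# H_\#^\top = WH^\top$ together with the full column rank of $W_\#$ forces $H=H_\# A^\top$. The objective then factorizes as $\det(W^\top W)=(\det A)^{-2}\,\det(W_\#^\top W_\#)$, so minimizing the volume is equivalent to maximizing $(\det A)^2$; meanwhile the two feasibility constraints become $H_\# A^\top\ge 0$ and, using $H_\#^\top e=e$, the normalization $Ae=e$.

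The core of the argument is a determinant bound driven by SSC1. Write $a_k$ for the $k$-th row of $A$, viewed as a column vector, so that the $k$-th column of $H_\# A^\top$ is $H_\# a_k$; thus $H_\# A^\top\ge 0$ is equivalent to $a_k\in\cone^*(H_\#^\top)$ for every $k$, by the description of the dual cone in~\eqref{eq:dualcone}. Since SSC1 is equivalent to $\cone^*(H_\#^\top)\subseteq\mathcal{C}^*$, this gives $a_k\in\mathcal{C}^*$, hence $e^\top a_k\ge\|a_k\|_2$; combined with the normalization $Ae=e$, which reads $e^\top a_k=1$, I obtain $\|a_k\|_2\le 1$ for every $k$. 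Hadamard's inequality then yields $(\det A)^2=\det(AA^\top)\le\prod_k\|a_k\|_2^2\le 1$. As $A=I$ is feasible and attains $(\det A)^2=1$, the groundtruth is optimal and the minimum of~\eqref{eq:optminvol} equals $\det(W_\#^\top W_\#)$.

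It remains to analyse the equality case, which is where SSC2 enters and which I expect to be the main obstacle. The value $(\det A)^2=1$ forces equality throughout Hadamard's bound, so $A$ is orthogonal with $\|a_k\|_2=1$ and $e^\top a_k=\|a_k\|_2$ for all $k$; the last equality places each $a_k$ on $\textbf{bd}(\mathcal{C}^*)$. Combined with $a_k\in\cone^*(H_\#^\top)$ from above, SSC2 forces $a_k\in\{\lambda e_j\mid\lambda\ge 0,\ j\in[r]\}$, and since $\|a_k\|_2=1$ we get $a_k=e_{\sigma(k)}$ for some index $\sigma(k)$; orthonormality of the rows makes $\sigma$ a permutation, so $A$ is a permutation matrix $\Pi$. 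Unwinding $W=W_\# A^{-1}$ and $H=H_\# A^\top$ gives $W=W_\#\Pi^\top$ and $H=H_\#\Pi^\top$, which is exactly essential uniqueness, the diagonal scaling ambiguity being eliminated by the normalization $H^\top e=e$. The parts needing the most care are the two cone translations — reading $H_\# A^\top\ge 0$ as membership in $\cone^*(H_\#^\top)$, and invoking the dual form of SSC1 to land in $\mathcal{C}^*$ — together with a clean extraction of the Hadamard equality conditions so that precisely the right boundary vectors are handed to SSC2.
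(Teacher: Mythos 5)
Your core argument is correct, and it is essentially the standard route for this result: the paper itself does not prove Theorem~\ref{thm:idenminvol} (it cites \cite{fu2018identifiability}), but its proof of Theorem~\ref{thm:nmfmain}, the order-2 nTD generalization, is exactly your argument --- parametrize every feasible solution by an invertible change of basis, translate the constraints into the rows/columns of that matrix lying in $\cone^*(H_{\#}^\top)\subseteq\mathcal{C}^*$, bound the determinant by a Hadamard-type product of norms equal to one, and invoke SSC2 in the equality case to force a permutation matrix. Your steps 1--9 carry this out correctly.

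The flaw is the opening ``w.l.o.g.'', and it touches a subtlety this literature is explicitly careful about. Rescaling the \emph{rows} of $H_{\#}$ does leave $\cone(H_{\#}^\top)$ (hence the SSC) unchanged, but it cannot be absorbed into $W_{\#}$: replacing $H_{\#}$ by $D'H_{\#}$ with $D'\in\R^{n\times n}$ diagonal positive turns $W_{\#}H_{\#}^\top$ into $W_{\#}H_{\#}^\top D' = XD'\neq X$, i.e., it rescales the columns of the data, and it does not produce $H^\top e = e$ anyway. The operation that does produce $H_{\#}^\top e = e$ while preserving $X$ is \emph{column} rescaling, $H_{\#}\mapsto H_{\#}D^{-1}$, $W_{\#}\mapsto W_{\#}D$ with $D=\diag(H_{\#}^\top e)$; but this maps $\cone(H_{\#}^\top)$ to $D^{-1}\cone(H_{\#}^\top)$, and the SSC need not survive such a diagonal map: $D\,\mathcal{C}\neq\mathcal{C}$ in general, so $\mathcal{C}\subseteq\cone(H_{\#}^\top)$ does not imply $\mathcal{C}\subseteq D^{-1}\cone(H_{\#}^\top)$. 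So neither horn of your justification works, and if the theorem were read with an unnormalized groundtruth, your determinant bound would break (the constraint becomes $Bs=e$ with $s=H_{\#}^\top e\neq e$, and $e^\top a_k=1$ is lost). The correct reading --- consistent with the paper's definition of essential uniqueness w.r.t.\ \eqref{eq:optminvol}, which requires the groundtruth to lie in the feasible set, and with Theorem~\ref{thm:nmfmain}, which lists $(U^{\#}_i)^\top e = e$ explicitly among its hypotheses rather than claiming it w.l.o.g.\ --- is that $H_{\#}^\top e = e$ is part of the assumptions. Delete the w.l.o.g.\ paragraph, state that hypothesis, and the rest of your proof is complete as written.
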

In simple terms, the SSC of $H$ in an NMF $X = WH^\top$ with $H\top e = e$ implies that there exist no other factorization where the first factor has a smaller volume. This intuition was introduced in the hyperspectral unmixing literature~\cite{craig1994minimum}. Note that the first variant of identifiability for min-vol NMF under the SSC was proved in~\cite{FMHS15, lin2015identifiability} under the constraint $H e = e$, which is not w.l.o.g.\ since normalizing the rows of $H$ makes the column of $X$ belong to the convex hull of the columns of $W$. 
Later it was relaxed to the normalization $H^\top e=e$ in~\cite{fu2018identifiability}, and to $W^\top e = e$ in~\cite{leplat2020blind}.

\begin{remark}[Nonnegativity of $X$ and $W$] \label{rem:nonneg} The nonnegativity of $X$ and $W$ is not necessary for Theorems~\ref{thm:idenseparableNMF} and~\ref{thm:idenminvol} to hold. Hence, there is a slight abuse of language when referring to separable NMF and min-vol NMF since, in such decompositions, $W$ could potentially have negative entries. The reason is that these models appeared in the NMF literature, hence authors kept the name NMF, although using the term semi-NMF would have been more appropriate. We refer the interested reader to the discussions in~\cite[Chapter~4]{Gil20} for more details.      
\end{remark}

\section{Previous Works on the identifiability of nTD} \label{sec:previousworks}


As for NMF, the nonnegativity constraints in nTD allows for a meaningful interpretation of the factors which may have a probabilistic or a physical meaning. 
In fact, nTD has been used in many applications, including 
clustering \cite{clustering2024}, 
hyperspectral image denoising and compression \cite{denoising18,compression2016}, 
audio pattern 
extraction~\cite{marmoret2021uncover, smith2019unmixer}, 
image fusion~\cite{ZHKS21}, 
community detection in multi-layer networks~\cite{wang2019multiway, agterberg2024estimating}, topic modeling~\cite{AHJK13}, 
and EEG signal analysis \cite{Rostakova2020126138,yin2022EEG}; see also~\cite{cichocki2009nonnegative} and the references therein. 
We will present several applications in Part II of this paper~\cite{sahaPartII2025}, as well as standard algorithms to compute nTDs.  
Let us now briefly describe previous results on the identifiability of nTD. 
 
\paragraph{Connections between NTD and NMF}

Zhou et al.~\cite{Zhou_2015} have explored the connections between the identifiability of NTD and NMF. They stated that if there exists a unique NMF of the unfoldings along all modes of the tensor, then the NTD of the given tensor is essentially unique. Moreover, they also look at the so-called population NTD model where one factor matrix is the identity matrix. In that case, they showed that uniqueness of the NMF of the  unfolding  along the mode corresponding to the factor matrix which is the identity translates to uniqueness of the NTD as well. 

As we will see in the rest of this section, further research in this direction has focused on imposing stricter conditions on the factors and the core tensor on the underlying NTD of the given tensor in order to achieve identifiability results.

\paragraph{Separability} 

Agterberg and Zhang~\cite{agterberg2024estimating} studied the identifiability of order-$3$ nTD where $U_i \in \mathbb R_+^{n_i \times r_i}$ are separable for $i \in [3]$. Their motivation comes from the interpretability of this model as a tensor version of the 
mixed-membership stochastic blockmodel~\cite{airoldi2008mixed}.  
Each factor, $U_i$, provides the membership of $r_i$ communities; more precisely, 
$U_i(:,j)$ is the membership vector of the $j$th community along the $i$th mode, with the magnitude of the entry representing the intensity of membership within that community. The core tensor, $\mathcal G$,   provides the relationship between the communities;  $\mathcal G(i,j,k)$ indicates the magnitude of the interactions between three communities: the $i$th in mode 1, the $j$th in mode 2 and the $k$th in mode 3. 
Separability of the $U_i$'s in this context requires that, for each community, there exists a pure node that only belongs to that community.  
Agterberg and Zhang~\cite{agterberg2024estimating} proved the following result. 
\begin{theorem}\cite[Proposition 1]{agterberg2024estimating} \label{thm:agterberg2024}
Let $\mathcal T = (U_1,U_2,U_3).\mathcal{G} \in \R^{n_1 \times n_2 \times n_3}$ be an order-$3$ tensor where $U_i \in \mathbb{R}^{n_i \times r_i}_+$ are separable for $i \in [3]$. 
If, for all $k \in [3]$, the unfolding of the tensor $\mathcal T$ along the $k$th mode, 
that is, $\mathcal T_{(k)}$, 
has rank $r_k$, then for any other decomposition of the form $\mathcal T = (U'_1,U'_2,U'_3).\mathcal{G}'$ where  $U'_i $ are separable for $i \in [3]$, there exist 
there exist permutation matrices $\Pi_1,\Pi_2,\Pi_3$ and diagonal matrices $D_1,D_2,D_3$ such that 
$U_{k} = {U'_{k}} D_k \Pi_k$ for all $k \in [3]$  and  $\mathcal{G} = (\Pi_1^\top D_1^{-1},\Pi_2^\top D_2^{-1},\Pi_3^\top D_3^{-1}).\mathcal{G}'$. 
\end{theorem}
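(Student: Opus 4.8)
The plan is to reduce the three-mode uniqueness to three independent applications of the separable-NMF identifiability result, Theorem~\ref{thm:idenseparableNMF}, one per factor, and then to recover the core from the recovered factors.

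First I would write each mode-$k$ unfolding of the Tucker decomposition as a matrix factorization in which the separable factor $U_k$ plays the role of the right factor. Concretely, for the given decomposition $\mathcal{T} = (U_1,U_2,U_3).\mathcal{G}$, the mode-$k$ unfolding factors as $\mathcal{T}_{(k)} = A_k U_k^\top$, where $A_k$ is the Kronecker product of the two remaining factors multiplied by the mode-$k$ unfolding of the core; for instance $A_3 = (U_1 \otimes U_2)\mathcal{G}_{(3)}$ (up to the column ordering fixed by Definition~\ref{def:KronPdt}). I would verify this identity directly from \eqref{eq:TuckerDecdefn}. Since $U_k$ is separable it has full column rank $r_k$, and the hypothesis $\operatorname{rank}(\mathcal{T}_{(k)}) = r_k$ then forces $A_k$ to have full column rank $r_k$ as well; in particular $\operatorname{rank}(\mathcal{T}_{(k)}) = r_k$ equals the size of the factorization, matching the hypothesis of Theorem~\ref{thm:idenseparableNMF}.

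Next, I would observe that $(A_k, U_k)$ is a separable NMF of $\mathcal{T}_{(k)}$ of size $r_k = \operatorname{rank}(\mathcal{T}_{(k)})$: the factor $U_k$ is separable, and although $A_k$ (which involves the core $\mathcal{G}$, not assumed nonnegative) may have negative entries, this is permitted by Remark~\ref{rem:nonneg}. Given a second decomposition $\mathcal{T} = (U_1',U_2',U_3').\mathcal{G}'$ with separable factors, the same identity yields $\mathcal{T}_{(k)} = A_k' (U_k')^\top$, a second separable NMF of the same size. Applying Theorem~\ref{thm:idenseparableNMF} to these two separable NMFs of $\mathcal{T}_{(k)}$ gives a permutation matrix $\Pi_k$ and an invertible diagonal matrix $D_k$ with $U_k = U_k' D_k \Pi_k$ (after absorbing the permutation/scaling into the form stated in the theorem). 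Doing this for $k=1,2,3$ recovers all three factor relations. Finally, to obtain the core relation I would substitute $U_k' = U_k \Pi_k^\top D_k^{-1}$ into $\mathcal{T} = (U_1',U_2',U_3').\mathcal{G}'$ and use multilinearity of the transformation $(\cdot,\cdot,\cdot).\mathcal{G}$ to pull the matrices $\Pi_k^\top D_k^{-1}$ onto the core, giving $\mathcal{T} = (U_1,U_2,U_3).\big[(\Pi_1^\top D_1^{-1},\Pi_2^\top D_2^{-1},\Pi_3^\top D_3^{-1}).\mathcal{G}'\big]$. Comparing with $\mathcal{T} = (U_1,U_2,U_3).\mathcal{G}$ and using that each $U_k$ has full column rank (so the core is uniquely determined, e.g.\ by left-multiplying each mode by a left inverse of $U_k$) yields $\mathcal{G} = (\Pi_1^\top D_1^{-1},\Pi_2^\top D_2^{-1},\Pi_3^\top D_3^{-1}).\mathcal{G}'$, as claimed.

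I expect the main obstacle to be the bookkeeping rather than any conceptual difficulty: getting the Kronecker-product ordering and the mode-$k$ unfolding identity exactly right under the paper's conventions (Definition~\ref{def:KronPdt} and the unfolding conventions), and carefully checking that the rank-$r_k$ hypothesis makes each $(A_k,U_k)$ a separable NMF of size $\operatorname{rank}(\mathcal{T}_{(k)})$ so that Theorem~\ref{thm:idenseparableNMF} applies verbatim. The one genuinely non-routine point to flag is the use of Remark~\ref{rem:nonneg}: since $\mathcal{G}$ is not assumed nonnegative, the left factor $A_k$ of each separable NMF can have negative entries, and the argument relies on the fact that Theorem~\ref{thm:idenseparableNMF} does not require nonnegativity of that factor.
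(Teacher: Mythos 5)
Your proposal is correct and follows essentially the same route as the paper's own argument: unfold along each mode, write $\mathcal{T}_{(k)} = \big[(\otimes_{j\neq k} U_j)\mathcal{G}_{(k)}\big] U_k^\top$, apply the separable-NMF identifiability result (Theorem~\ref{thm:idenseparableNMF}, with Remark~\ref{rem:nonneg} covering the possibly non-nonnegative left factor) to each unfolding to recover the $U_k$ up to diagonal scaling and permutation, and then recover the core using the full column rank of the factors (the paper does this via Moore--Penrose pseudoinverses, which is the same left-inverse step you describe). No gaps.
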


 The proof essentially follows the identifiability of separable NMF (Theorem~\ref{thm:idenseparableNMF}), and can actually be generalized to any order; see Theorem~\ref{thm:agterberg2024gen} in Appendix~\ref{app:separab}. 
Moreover, in Section~\ref{sec:seporderd}, we show that one can replace the rank assumption on the unfoldings along all modes with a rank assumption on the unfolding along only one mode (although this requires some restrictions on the dimensions of the core tensor).


\paragraph{Sufficiently Scattered Condition (SSC)}  

Sun and Huang~\cite{SH23} studied the identifiability of  nTD where $U_i \in \R_+^{n_i \times r_i}$ satisfy the SSC for $i \in [d]$.  
They considered the following optimization problem: 
\begin{equation}\label{eq:optSH23}
    \max_{\mathcal{G}, U_1,\dots,U_d} \sum_{i=1}^d \logdet(U_i^\top U_i) \; \text{ such that } \; 
    U_i \geq 0, 
    U_i^\top e = e \text{ for } i \in [d], 
    \mathcal T = (U_1,\dots,U_d).\mathcal{G} , 
\end{equation}
where they maximize the volume of the factors of the nTD. They claimed that if there exists a decomposition of the core tensor $\mathcal T$ such that all $U_i$'s satisfy the SSC, then the nTD of $\mathcal T$ obtained from an optimal solution to (\ref{eq:optSH23}) recovers the $U_i$'s, up to permutation~\cite[Theorem~1]{SH23}.  
Unfortunately, in this conference version of the paper~\cite{SH23}, the authors do not discuss the crucial assumptions regarding the core tensor (nor provide a proof).  
Hence, we provide here a complete result, whose proof follows directly from max-vol NMF identifiability on each unfolding~\cite[Theorem 7.1]{olivierthesis2024}.  
\begin{theorem} \label{thm:maxvolNTD}
Let $\mathcal T = (U_{1}^{\#},\dots,U_{d}^{\#}).\mathcal{G}^{\#}$ where $U_{i}^{\#} \in \R^{n_i \times r_i}$ 
satisfy the SSC, 
and  
 the unfoldings of $\mathcal{G}^{\#}$ along all modes $k \in [d]$, denoted by $G_{(k)}^{\#}$, have rank $r_k$. 
 Then $\mathcal T = (U_{1}^{\#},\dots,U_{d}^{\#}).\mathcal{G}^{\#}$ is essentially unique w.r.t.~\eqref{eq:optSH23}.  
\end{theorem}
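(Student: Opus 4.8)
The plan is to reduce the max-vol nTD problem~\eqref{eq:optSH23} to $d$ independent max-vol NMF problems, one per unfolding, and then to glue the per-mode conclusions back together. The starting point is the classical unfolding identity for the Tucker model: for each mode $k \in [d]$,
\[
\mathcal T_{(k)} = U_k^{\#}\, G_{(k)}^{\#} \Big( \bigotimes_{i \neq k} U_i^{\#} \Big)^{\!\top} = U_k^{\#} (M_k^{\#})^\top, \qquad M_k^{\#} := \Big(\textstyle\bigotimes_{i\neq k} U_i^{\#}\Big) (G_{(k)}^{\#})^\top,
\]
with the Kronecker factors ordered as dictated by the unfolding convention. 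This exhibits $\mathcal T_{(k)} = U_k^{\#}(M_k^{\#})^\top$ as an NMF of $\mathcal T_{(k)}$ whose left factor $U_k^{\#}$ is nonnegative, normalized ($U_k^{\#\top}e = e$, after absorbing a positive column scaling into $\mathcal{G}^{\#}$, which is harmless since it is accounted for by the diagonal ambiguity $D_k$), and satisfies the SSC. First I would check the rank hypothesis needed to invoke NMF identifiability: since the SSC implies full column rank, each $U_i^{\#}$ has rank $r_i$, so $\bigotimes_{i\ne k}U_i^{\#}$ has full column rank $\prod_{i\ne k} r_i$; combined with $\rank(G_{(k)}^{\#}) = r_k$, this yields $\rank(\mathcal T_{(k)}) = r_k = \rank(U_k^{\#})$, exactly the rank condition required by the max-vol NMF identifiability result~\cite[Theorem~7.1]{olivierthesis2024}.

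The second, and key, step is a decoupling argument linking the single coupled objective $\sum_{i=1}^d \logdet(U_i^\top U_i)$ to the separate unfoldings. For any feasible $(U_1,\dots,U_d,\mathcal{G})$ of~\eqref{eq:optSH23}, the same unfolding identity shows that $(U_k, M_k)$ with $M_k = (\bigotimes_{i\ne k}U_i)G_{(k)}^\top$ is feasible for the $k$th max-vol NMF problem on $\mathcal T_{(k)}$; hence $\logdet(U_k^\top U_k) \le v_k^{*}$, where $v_k^{*}$ denotes the optimal value of that NMF problem. Summing over $k$ bounds the nTD objective by $\sum_k v_k^{*}$. On the other hand, the groundtruth is feasible and, by~\cite[Theorem~7.1]{olivierthesis2024}, each $U_k^{\#}$ is an essentially unique maximizer of the $k$th NMF problem, so it attains $v_k^{*}$ in every mode simultaneously and therefore realizes the upper bound $\sum_k v_k^{*}$. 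Consequently the groundtruth is globally optimal, and any optimal solution $(U_1^{*},\dots,U_d^{*},\mathcal{G}^{*})$ must meet the bound with equality in each term, i.e.\ each $U_k^{*}$ is itself an optimal max-vol NMF factor for $\mathcal T_{(k)}$.

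Applying max-vol NMF identifiability mode by mode then yields permutations $\Pi_k$ and diagonal matrices $D_k$ with $U_k^{\#} = U_k^{*}\Pi_k D_k$ for all $k$ (the normalization $U_k^\top e = e$ forcing $D_k$ to be trivial, though I keep it to match the statement). It remains to recover the core. Substituting $U_k^{*} = U_k^{\#} D_k^{-1}\Pi_k^\top$ into $\mathcal T = (U_1^{*},\dots,U_d^{*}).\mathcal{G}^{*}$ and using multilinearity gives $\mathcal T = (U_1^{\#},\dots,U_d^{\#}).\big[(D_1^{-1}\Pi_1^\top,\dots,D_d^{-1}\Pi_d^\top).\mathcal{G}^{*}\big]$; comparing with $\mathcal T = (U_1^{\#},\dots,U_d^{\#}).\mathcal{G}^{\#}$ and using that each $U_k^{\#}$ has full column rank (so the multilinear map $\mathcal A \mapsto (U_1^{\#},\dots,U_d^{\#}).\mathcal A$ is injective, via mode-wise left inverses) forces $\mathcal{G}^{\#} = (D_1^{-1}\Pi_1^\top,\dots,D_d^{-1}\Pi_d^\top).\mathcal{G}^{*}$, which is precisely essential uniqueness. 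I expect the main obstacle to be the decoupling argument: one must argue carefully that maximizing the summed objective is equivalent to maximizing each unfolding's volume separately, and in particular that optimality of the coupled problem \emph{forces} per-mode optimality (the inequality-plus-attainment argument above), rather than merely bounding it. A secondary, bookkeeping difficulty is the scaling/normalization housekeeping needed so that the groundtruth is a bona fide feasible point of~\eqref{eq:optSH23} without disturbing the SSC.
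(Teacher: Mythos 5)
Your proposal is correct and takes essentially the same approach as the paper: the paper's entire proof is the remark that the result ``follows directly from max-vol NMF identifiability on each unfolding''~\cite[Theorem 7.1]{olivierthesis2024}, and your unfolding identity, rank verification, decoupling of the separable objective $\sum_k \logdet(U_k^\top U_k)$ into per-mode max-vol NMF problems with the attainment argument, and core recovery via full column rank are precisely the details behind that sentence. The two caveats you flag (the normalization of the groundtruth so that it is feasible for~\eqref{eq:optSH23}, and forcing per-mode optimality rather than merely bounding) are likewise left implicit in the paper, so nothing in your argument diverges from its route.
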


\paragraph{The $d$-persistent topic models}  

Anandkumar et al.~\cite{AHJK13} discussed the notion of identifiability of the $d$-persistent topic models. For $d$-persistent topic, the $2d$-th order moment of the model, denoted $M_d$, is an unfolding of an order-$2d$ tensor which has a decomposition of the form 
$$
M_{2d} \; = \; (A ^{\odot d}) \, G \, (A ^{\odot d})^\top, 
$$ 
where $A^{\odot d}$ is the Khatri-Rao product of the matrix $A$ taken $d$ times and $G$ is the covariance matrix of the \textit{hidden variables}. 
This is a special case of a TD, and a generalization of the CPD. 
They showed that the columns of $A$ are identifiable under some non-trivial conditions. Using this tri-NMF structure, one could further enquire the identifiability of the non-negative version of the $2d$-persistent topic model using the framework introduced in this paper.
This is left for a future work.


\paragraph{How our work goes beyond the state of the art}   

In this paper, we will exploit the structure of the slices of the tensor and of its unfoldings, and will \textit{minimize the volume of the core tensor}. 
Our modeling approach has several advantages: 

\begin{itemize}
    \item When using unfoldings, instead of enforcing conditions on the unfoldings of the tensor along all the modes, as in previous works (in particular  Theorems~\ref{thm:maxvolNTD} and~\ref{thm:agterberg2024}, and  in~\cite{Zhou_2015}), 
    we restrict our attention to the unfolding along just a single mode, leading to stronger identifiability results;     
    see Section~\ref{sec:unfoldings} for order-$3$ tensors, and Section~\ref{app:unfoldorderd} for higher orders. 
    

    \item When using slices, 
    the dimensions of the matrix factorization subproblems that we will solve will be significantly smaller compared to previous works --we will only need to consider the factorization of $d-1$ slices. 
    We will show that, under certain restrictions on the factors and the core tensor, the slices have an essentially unique factorization which will imply the identifiability of the underlying nTD; 
    see Section~\ref{sec:slices} for order-$3$ tensors, 
    and Section~\ref{app:orderdslices} for higher orders.

\end{itemize}

Table~\ref{tab:summary3} summarizes the main cases in which we can prove identifiability for order-$3$ nTDs. These results generalize to any order; see Section~\ref{app:orderd}. 
\renewcommand{\arraystretch}{1.2} 
\begin{center}
\begin{table}[h!]
\begin{center}
\caption{Main conditions on the $U_i$'s, $\mathcal G$ and $\mathcal T$ to have an identifiability procedure for order-$3$ nTD. 
The matrix $\mathcal G_{(3)}$ is the third mode unfolding of $\mathcal G$. The matrix $\mathcal T^{(k)}_{i}$ is the $i$th slice along the $k$th mode. By symmetry of the problem, the roles of the modes are interchangeable.  
\label{tab:summary3}
}
\label{numreal}  
\begin{tabular}{c|c|c|c}
 $\mathcal T = (U_1,U_2,U_3).\mathcal G$ & $\mathcal G$  & $U_i$'s & $\mathcal T$ \\ 
\hline 
Procedure~0
& $\rank(\mathcal G_{(3)}) = r_3 = r_1 r_2$ 
& $U_1 \otimes U_2$ is SSC  
& 
\\ 
(Theorem~\ref{thm:order2unfold}) 
&
& $U_3$ is SSC   
& \\ 
\hline
Procedure~1
& $r_3 \leq r_1 = r_2$  
& $U_i$'s are SSC 
&  
$\exists i_2$ s.t. $\rank(\mathcal T^{(2)}_{i_2})  = r_3$ 
\\ 
(Theorem~\ref{thm:idea1order3})
& 
& 
&  
$\exists i_3$ s.t. $\rank(\mathcal T^{(3)}_{i_3}) = r_1$ 
\\ 
\hline 
Procedure~3 
&  $\sqrt{r_3} \leq r_1 = r_2$  
& $U_i$'s are SSC  
& $\exists i$ s.t. $\rank(\mathcal T^{(3)}_{i}) = r_1$
\\ 
(Theorem~\ref{thm:idea3order3}) 
&  $\rank(\mathcal G_{(3)}) = r_3$ 
& 
& 
\\ 
\end{tabular} 
\end{center}
\end{table}
\end{center}
Note that Procedure~2 (resp.\ 4) will be a randomized version of Procedure~1 (resp.\ 3) with weaker assumptions: instead of requiring two (resp.\ one) slices to have maximum rank, we will just need the span of all the slices to have maximum rank.

Finally, our approaches also have algorithmic advantages over previous works. 
We defer the details to Part II of this paper~\cite{sahaPartII2025}.


\section{Order-2 nTD a.k.a.\ Nonnegative Matrix Tri-Factorization} \label{sec:NMF}

Given a data matrix $X \in \R^{m \times n}_+$, order-2 nTD decomposes $X$ into a product of three factor matrices $U_1 \in \R^{m \times r}_+$, $G \in \R^{r \times r}_+$ and $U_2 \in \R^{n \times r}_+$ such that 
    $X = U_1 G U_2^\top$, which is also known as nonnegative matrix tri-factorization~\cite{ding2006orthogonal}. 
When $G$ is the identity matrix, order-2 nTD boils down to NMF. Otherwise, order-2 nTD can never be identifiable; in fact, we have 
\[
U_1 \, G \, U_2^\top 
\; = \; U_1 \, I_r \,  (U_2 G^\top)^\top 
\; = \; (U_1 G) \, I_r \, U_2^\top  . 
\]  
Therefore, we must use additional constraints and/or regularization to make order-2 nTD identifiable. In practice, people used sparsity constraints; see, e.g., \cite{ding2006orthogonal, wang2011fast, zhang2012overlapping, kolomvakis2025boolean}. 

Surprisingly, although order-2 nTDs have been extensively used in practice, there exists, to the best of our knowledge, no identifiability results in this case. This section aims to fill in this gap. 
We discuss the case of separabiliy in Section~\ref{sec:idenseparable}, and the case of the SSC in Section~\ref{sec:minvolTriMF}.

\subsection{Order-2 nTD under separability}\label{sec:idenseparable}

The notion of separable NMF can be extended from two to three factors and such decompositions have been extensively studied in the matrix factorization literature in the symmetric case. Given a symmetric matrix $A \in \R^{m \times m}$, one can consider its decomposition $A = W S W^\top$, sometimes referred to as tri-symmetric NMF. 
This decomposition occurs in community detection in graphs, and is closely related to the stochastic block models: the matrix $W$ contains the community information, and the matrix $S$ the interactions between the communities; see, e.g., \cite{xiao2019uniq} and the references therein. 
Arora et al.~\cite{AGHMMSWZ13} gave a provably-robust polynomial-time algorithm for tri-symmetric NMF with identifiability guarantees. 
They used it in the context of topic modeling where $A$ is the word co-occurence matrix, the matrix $W$ is the word-by-topic matrix such that $W(i, k)$ is the probability for word $i$ to be picked under the topic $k$, and the matrix $S$ is a topic-by-topic matrix that accounts for the interactions between the topics. 
In topic modeling, the following separability assumption makes sense: for each topic, there exists an anchor word, that is, a word that is only used by that topic. 
Following Definition~\ref{def:separablematricesfirst}, this is equivalent to requiring
that the matrix $W^\top$ is separable, that is, there exists $\mathcal{K}$ such that $W(\mathcal{K}, :)$ is a diagonal matrix. Moreover, to eliminate the scaling degree of freedom, one can assume  w.l.o.g.\ that $W^\top e = e$. 

In this section, we present a generalization of the symmetric model to the non-symmetric case. In the noiseless setting, we assume that there exist matrices $U_1 \in \R^{n_1 \times r_1}_+$, $G \in \R^{r_1 \times r_2}$, $U_2 \in \R^{n_2 \times r_2}_+$ such that $X = U_1 G U_2^\top$. 
Moreover, we assume that $U_1$ and $U_2$ are separable. 
This model can be interpreted in the following way from the point of community detection: the matrices $U_1$ and $U_2$ are the membership indicator matrices where $U(j_1,k_1)$ is the membership indicator of item $j_1$ for community $k_1$ and $U_2(j_2,k_2)$ is the membership indicator of item $j_2$ for community $k_2$. The entry $G(k_1,k_2)$ is the strength between the communities $k_1$ and $k_2$. Since $U_1 \neq U_2$ in general, $U_1$ and $U_2$ correspond to different communities. For example, if $X$ is a movie-by-user matrix (e.g., $X(i,j) = 1$ if user $j$ has watched movie $i$, and $X(i,j) = 0$ otherwise), $U_1$ will correspond to communities of movies while $U_2$ to communities of users.   Separability of $U_1$ (and similarly for $U_2$) requires that, for each community (that is, for each column of $U_1$), 
there exist a node that is only associated to that community. Such nodes are referred to as pure nodes or anchor nodes.

In the following theorem, we show that when $r_1 = r_2 = r = \rank(X)$ and under the separability assumption, such a decomposition is \textit{essentially unique}. 
\begin{theorem} \label{th:serptrisymNMF}
 Let $X = U_1 G U_2^\top$ where $U_1 \in \R^{n_1 \times r}_+$ and $U_2 \in \R^{n_2 \times r}_+$ are separable matrices, and $r = \rank(X)$. 
 For any other decomposition of $X = U^*_1 G^* {U^*_2}^\top$ of size $r$ where $U^*_1$ and $U^*_2$  are separable, there exist permutation matrices $\Pi_1,\Pi_2$ and diagonal matrices $D_1,D_2$ such that 
 $U^*_i = U_i D_i \Pi_i$ for $i \in [2]$, and 
 $G^* = \Pi_1^\top D_1^{-1}G D_2^{-1}\Pi_2$.
\end{theorem}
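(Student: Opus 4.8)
The plan is to reduce the statement to two applications of the identifiability of separable NMF (Theorem~\ref{thm:idenseparableNMF}), one per mode, and then to recover $G^*$ by a rank argument. The single idea behind the proof is to absorb the core $G$ into the \emph{non-separable} side of the product, so that each mode becomes an honest two-factor separable NMF.

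First I would group the factors. Writing $X = (U_1 G)\, U_2^\top$, this is a rank-$r$ factorization $X = W H^\top$ with $W = U_1 G$ and separable right factor $H = U_2$; the factor $W = U_1 G$ need not be nonnegative, but this is exactly the situation permitted by Remark~\ref{rem:nonneg} and the footnote of Theorem~\ref{thm:idenseparableNMF}. The competing decomposition $X = (U_1^* G^*)(U_2^*)^\top$ is likewise a separable NMF of size $r = \rank(X)$ with separable right factor $U_2^*$. Applying Theorem~\ref{thm:idenseparableNMF} therefore yields a permutation matrix $\Pi_2$ and a diagonal matrix $D_2$ with $U_2^* = U_2 D_2 \Pi_2$ (the theorem's output $U_2^* = U_2 D^{-1}\Pi$ is put in this form by renaming the diagonal factor $D_2 := D^{-1}$, which is again diagonal). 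Transposing and repeating the argument on $X^\top = (U_2 G^\top)\, U_1^\top$, where now $U_1$ is the separable factor, gives $\Pi_1$ and $D_1$ with $U_1^* = U_1 D_1 \Pi_1$. Note the two modes are handled independently, which is why $\Pi_1$ and $\Pi_2$ are allowed to differ.

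The second step recovers $G^*$. Substituting the two relations into $X = U_1^* G^* (U_2^*)^\top$ gives $X = U_1\,(D_1 \Pi_1 G^* \Pi_2^\top D_2)\, U_2^\top$, whereas by hypothesis $X = U_1 G U_2^\top$. Since $r = \rank(X)$, the factorization forces $U_1$ and $U_2$ to have full column rank $r$ (this also follows directly from separability, and it makes $G$ invertible); hence $U_1$ admits a left inverse and $U_2^\top$ a right inverse, and cancelling them yields $D_1 \Pi_1 G^* \Pi_2^\top D_2 = G$. Solving gives $G^* = \Pi_1^\top D_1^{-1} G D_2^{-1} \Pi_2$, as claimed.

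I do not expect a genuine obstacle here: the proof is essentially bookkeeping of the scaling/permutation ambiguities so that the theorem's single pair $(D,\Pi)$ lands in the stated orientation $U_i D_i \Pi_i$, together with the legitimate cancellation of the full-column-rank factors. The only point that requires care is confirming that the grouped factors $U_1 G$ and $U_2 G^\top$ are admissible inputs to Theorem~\ref{thm:idenseparableNMF} despite possibly having negative entries, which is precisely what Remark~\ref{rem:nonneg} guarantees.
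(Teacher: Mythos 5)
Your proposal is correct and follows essentially the same route as the paper's proof: absorb $G$ into the non-separable factor to form $W = U_1 G$, apply Theorem~\ref{thm:idenseparableNMF} to $X$ and then to $X^\top$ to recover $U_2^*$ and $U_1^*$ up to permutation and scaling, and finally recover $G^*$ by cancelling the full-column-rank factors. Your explicit appeal to Remark~\ref{rem:nonneg} to justify that $U_1 G$ may have negative entries is a point the paper leaves implicit, but the argument is the same.
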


\begin{proof}
Let $X = W H^\top$ with $W= U_1G$ and $H = U_2$ be an NMF of $X$ of size $r$, and $X = W_* H_*^\top$ with $W_*= U_1^* G^*$ and $H = U_2^*$ be an another one. 
Since, $U_2$ and $U_2^*$ are separable matrices, 
using Theorem~\ref{thm:idenseparableNMF}, 
there exist a permutation matrix $\Pi_2$ and a diagonal matrix $D_2$ such that $U_2^* = U_2 D_2 \Pi_2$. 
Using the same argument on $X^\top 
= U_2 G^\top U_1^\top = U_2^* {G^*}^\top {U_1^*}^\top$, we obtain that $U_1^* =  U_1 D_1  \Pi_1$ for some permutation matrix $\Pi_1$ and a diagonal matrix $D_1$. 
Finally, 
$X = U_1 G U_2^\top = U_1^* G^* {U_2^*}^\top$  
 implies
 that $G^* = \Pi_1^\top D_1^{-1} G  D_2^{-1}\Pi_2$ since all factor matrices $(U_1,U_2,U_2^*,U_2^*)$ are separable, hence have rank $r$.  
\end{proof}

Note that this result is not applicable  
when $r_1 \neq r_2$. This is because $G$ is rank deficient, since $\rank(G) \leq \min(r_1,r_2)$. However, it can be easily extended if $\cone(X)$ has $r_1$ rays and $\cone(X^\top)$ has $r_2$ rays; similarly as for separable NMF (see Footnote~\ref{footnote:rays}). However, to keep it simple, we restrict our analysis to the full-rank case. 

In part II of this paper~\cite{sahaPartII2025}, we will extend the ideas from \cite{AGHMMSWZ13} to give a polynomial-time robust algorithm for computing separable order-2 nTDs.

\subsection{Order-2 nTD under the SSC} \label{sec:minvolTriMF}

Separability of both $U_1$ and $U_2$ is rather strong. For example, in terms of community detection, it requires all communities to have a pure node. 
In order to provide a relaxation to the notion of separability, 
we now generalize min-vol NMF under the SSC (Theorem~\ref{thm:idenminvol}) for order-2 nTD.  



We propose the following formulation: Given the matrix $X \in \R^{n_1 \times n_2}$ and the factorization rank $r = \rank(X)$, 
\begin{equation}\label{eq:optcriterion}
    \min_{U_i \in \R^{n_i \times r} \, i \in [2], 
    G \in \R^{r \times r}} 
    |\det(G)| 
    \quad 
     \text{ such that } 
     \quad  
     X = U_1 G U_2^\top, 
     U_i^{\top} e =  e, 
     U_i \geq 0 \text{ for } i \in [2]. 
\end{equation}
We refer to this problem as min-vol order-2 nTD. The constraints $U_i^{\top} e =  e$ for $i \in [2]$ remove the scaling ambiguity. 
Let us provide an identifiability result for~\eqref{eq:optcriterion} under the SSC. This is a generalization of the symmetric case studied in~\cite{huang2016anchor, FHS19}, where $X$ is assumed to be a symmetric matrix and $U_1 = U_2$. 
\begin{theorem}\label{thm:nmfmain}
Let $X = U_1^{\#} G^{\#} {U_2^{\# \top}}$ be a matrix of rank $r$ where $U^{\#}_i \in R^{n_i\times r}$ satisfy the SSC and ${U^{\# \top}_i} e = e$ for $i \in [2]$. 
Then $X = U_1^{\#} G^{\#} {U_2^{\# \top}}$ is essentially unique w.r.t.~\eqref{eq:optcriterion}. 
\end{theorem}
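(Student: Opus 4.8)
The plan is to mirror the strategy used for the separable case (Theorem~\ref{th:serptrisymNMF}), reducing the tri-factorization to the corresponding NMF identifiability result applied once to each of the two factors; here the relevant tool is min-vol NMF identifiability (Theorem~\ref{thm:idenminvol}) rather than separable NMF. First I would observe that, since $r = \rank(X)$ and $X = U_1^\# G^\# (U_2^\#)^\top$, all of $U_1^\#$, $U_2^\#$, $G^\#$ have full rank $r$; the same holds for any feasible solution of \eqref{eq:optcriterion}, whose first factor must span the column space of $X$ and whose last factor must span the row space of $X$. Hence any feasible $(U_1, G, U_2)$ of size $r$ can be written as
\begin{equation*}
U_1 = U_1^\# A, \quad U_2 = U_2^\# B, \quad G = A^{-1} G^\# B^{-\top},
\end{equation*}
for some invertible $A, B \in \R^{r \times r}$, the expression for $G$ being forced by $X = U_1 G U_2^\top$ and the full column rank of $U_1^\#, U_2^\#$.

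Next I would exploit the multiplicativity of the determinant: the objective becomes
\begin{equation*}
|\det G| = \frac{|\det G^\#|}{|\det A|\,|\det B|},
\end{equation*}
so that minimizing $|\det G|$ is equivalent to maximizing $|\det A|\,|\det B|$. The key structural observation is that the feasible set decouples: the constraints $U_1^\# A \geq 0$ and $(U_1^\# A)^\top e = e$ involve only $A$, the constraints $U_2^\# B \geq 0$ and $(U_2^\# B)^\top e = e$ involve only $B$, and there is no nonnegativity constraint coupling them through $G$. Since the feasible region is a product $\mathcal A \times \mathcal B$ and the objective factorizes as a product of strictly positive terms, maximizing $|\det A|\,|\det B|$ reduces to maximizing $|\det A|$ over $\mathcal A$ and $|\det B|$ over $\mathcal B$ independently, and an optimal solution of \eqref{eq:optcriterion} must be optimal in each factor separately.

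It then remains to show that $\max |\det B| = 1$, attained uniquely (up to permutation) at $B = \Pi_2$, and symmetrically for $A$. For the $B$-factor I would recast the subproblem as a min-vol NMF of $X$: writing $W = U_1 G$, we have $X = W U_2^\top$, and for fixed $U_2$ (full rank) the factor $W = X U_2 (U_2^\top U_2)^{-1}$ is determined, with $W = U_1^\# G^\# B^{-\top}$; a direct computation gives $\det(W^\top W) = \det\big((U_1^\# G^\#)^\top (U_1^\# G^\#)\big) / |\det B|^2$, so minimizing $\det(W^\top W)$ over feasible $U_2$ is exactly maximizing $|\det B|$. Since $U_2^\#$ satisfies the SSC and $(U_2^\#)^\top e = e$, Theorem~\ref{thm:idenminvol} applies and yields that this minimum is attained uniquely, up to column permutation, at $U_2 = U_2^\#$, i.e.\ $B = \Pi_2$ and $\max|\det B| = 1$. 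Applying the same argument to $X^\top = (U_2 G^\top)\,U_1^\top$, with $\tilde W = U_2 G^\top = U_2^\# (G^\#)^\top A^{-\top}$ depending only on $A$, and using the SSC of $U_1^\#$, gives $A = \Pi_1$. Substituting back yields $U_1^* = U_1^\# \Pi_1$, $U_2^* = U_2^\# \Pi_2$ and $G^* = \Pi_1^\top G^\# \Pi_2$ (the column-sum normalizations $U_i^\top e = e$ pin down the diagonal scalings), which is the claimed essential uniqueness.

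The main obstacle I anticipate is the passage in the last step from the order-2 nTD objective $|\det G|$ to the min-vol NMF objective $\det(W^\top W)$ of Theorem~\ref{thm:idenminvol}: the two problems are \emph{a priori} different, since in min-vol NMF the first factor $W$ is free whereas in \eqref{eq:optcriterion} it is constrained to factor as $U_1 G$ with $U_1$ nonnegative and normalized. The reduction hinges on two facts that must be checked carefully, namely that $W$ is in fact uniquely determined by $U_2$ (so the two problems share the same feasible set of last factors and proportional objectives), and that the joint minimization genuinely decouples so that one-sided optimality may be invoked; verifying that the feasible sets coincide (in particular that every admissible $U_2$ extends to an admissible triple, which holds by taking $A = I$) and tracking the normalization constraints to recover the correct scaling ambiguity are the delicate points.
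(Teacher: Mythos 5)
Your proof is correct, but it follows a genuinely different route from the paper's. The paper gives a direct, self-contained argument: after the same parametrization $U_1^* = U_1^\# A$, $U_2^* = U_2^\# B$, $G^* = A^{-1}G^\# B^{-\top}$, it derives $|\det(A)||\det(B)| \geq 1$ from optimality, uses SSC1 together with the constraints to place the columns of $A$ and $B$ in $\mathcal{C}^*$, applies Hadamard's inequality and the column-sum normalization to get $|\det(A)|, |\det(B)| \leq 1$, and finally uses the equality case plus SSC2 to force $A$ and $B$ to be permutations --- in effect re-running the min-vol NMF identifiability argument of~\cite{fu2018identifiability} inside the tri-factorization. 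You instead treat Theorem~\ref{thm:idenminvol} as a black box and apply it twice (once to $X$, once to $X^\top$), mirroring the paper's own proof of the separable case (Theorem~\ref{th:serptrisymNMF}); the enabling observations, which you verify correctly, are (i) that in the $(A,B)$ parametrization the feasible set of~\eqref{eq:optcriterion} is a product $\mathcal{A} \times \mathcal{B}$ and the objective factorizes, so joint optimality implies one-sided optimality in each block, and (ii) that $W = U_1 G = U_1^\# G^\# B^{-\top}$ depends only on $B$, with $\det(W^\top W) \propto |\det(B)|^{-2}$, so the $B$-block subproblem \emph{is} min-vol NMF~\eqref{eq:optminvol} on $X$ (the feasible-set identification goes through because any feasible $H$ must have column space equal to the row space of $X$, hence $H = U_2^\# B$ with $B \in \mathcal{B}$, and conversely every such $U_2$ extends to a feasible triple via $A = I$). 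What each approach buys: yours is more modular and conceptually shorter, and any strengthening of min-vol NMF identifiability would propagate automatically; the paper's direct argument is self-contained and yields Corollary~\ref{corr:suboptimalsoltn} (identifiability already from any \emph{feasible} solution whose objective does not exceed the groundtruth's) as an immediate byproduct, whereas your reduction recovers that corollary only after an extra step showing that the maximum of $|\det(B)|$ over $\mathcal{B}$ equals $1$ (e.g.\ via attainment, since SSC1 makes $\mathcal{B}$ compact), because a merely suboptimal joint solution does not directly certify one-sided optimality.
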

\begin{proof} 
Since $\rank(X) = r$ and $X = U_1^{\#}G^{\#}U_2^{\# \top} = U_1^* G^* U_2^{* \top}$, we have $\rank(U_1^{\#}) = \rank(U_1^*) = \rank(U_2^*) = \rank(U_2^{\#}) = r$. 
Hence there exist invertible matrices $A, B \in \R^{r \times r}$ such that $U_1^* = U_1^{\#} A$, 
$U_2^* = U_2^{\#} B$, 
and $G^* = A^{-1} G^{\#} B^{-\top}$. 
Since $(U_1^*,G^*,U_2^*)$ is an optimal solution to (\ref{eq:optcriterion}), it follows that
\begin{equation}\label{eq:detmultiplicativity}
    |\det(G^*)| = \left|\det\big(A^{-1}G^{\#}B^{-1}\big)\right| \leq \left| \det\big( G^{\#} \big) \right|.
\end{equation}
This gives us that 
\begin{equation}\label{eq:detAdetBlb}
    |\det(A)| \ |\det(B)| \geq 1.
\end{equation} 
Moreover, since $(U_1^{\#},G^{\#},U_2^{\#})$ and $(U_1^*,G^*,U_2^*)$ are feasible solutions to (\ref{eq:optcriterion}), 
\begin{equation}\label{eq:l1normofrowsandcols}
    e = U_1^{* \top} e = A^{\top} U_1^{\# \top}e = A^{\top} e, 
    \;  \text{ and } \; 
    e = U_2^{* \top} e = B^\top U_2^{\# \top} e = B^{\top} e. 
\end{equation}
Moreover, $U_1^* = U_1^{\#}A \geq 0$ and $U_2^{*} = U_2^{\#}B \geq 0$. Following the definition of dual cones in (\ref{eq:dualcone}), this gives us that for all $i,j \in [r]$,
\begin{equation}\label{eq:rowscolscontainedindualcone}
    A[:,j] \in \cone^*\big(U_1^{\# \top}\big) \text{ and } B[:,i] \in \cone^*\big(U_2^{\# \top}\big), 
\end{equation}
where $A[:,j]$ denotes the $j$th the columns of $A$. 
Since $U_1^{\#}$ and $U_2^{\#}$ satisfy the SSC, we have for all $i,j \in [r]$ 
\begin{equation}\label{eq:indualcone}
A[:,j], B[:,i] \in \mathcal{C}^* = \left\{x \in \R^r_+ \ | \ e^{\top}x \geq \|x\|_2 \right\}.
\end{equation}
Then, using (\ref{eq:indualcone}) and (\ref{eq:l1normofrowsandcols}), 
we get 
\begin{equation}\label{eq:detAdetBub}
    |\det(A)| \leq \prod_{j=1}^r \|A(:,j)\|_2 \leq  \prod_{j=1}^r e^{\top}A(:,j) = 1,   
\end{equation}
and similarly for $B$. 
Using (\ref{eq:detAdetBlb}) and (\ref{eq:detAdetBub}), we obtain $|\det(A)| = |\det(B)| = 1$.
Since the equality holds in (\ref{eq:detAdetBub}), $A(:,j), B(i,:) \in \textbf{bd}(\mathcal{C}^*)$. This, along with (\ref{eq:rowscolscontainedindualcone}) and the definition of SSC, gives us that for all $i,j \in [r]$,
\begin{align*}
    A[:,j],B[i,:] \in  \{\lambda e_k | \lambda \geq 0  \text{ and } k \in [r]\}. 
\end{align*}
Since $A^{\top} e = e$, $B^{\top} e = e$ and $|\det(A)| = |\det(B)| = 1$, 
$A$ and $B$ are permutation matrices. 
\end{proof}

The proof of the previous theorem implies that identifiability is actually achieved by any feasible solution of~\eqref{eq:optcriterion} whose objective function value is smaller than that of the ground truth. 
Let us state this result explicitly as it will be useful later on. 
\begin{corollary}\label{corr:suboptimalsoltn}
  Let $X = U_1^{\#} G^{\#} {U_2^{\# \top}}$ be a matrix of rank $r$ where $U^{\#}_1$ and $U^{\#}_2$ satisfy the SSC 
  and 
${U^{\# \top}_i} e = e$ for $i \in [2]$.  
Let $U_1^* \in \R_+^{m \times r}, U_2^* \in \R_+^{n \times r}$ and $G^* \in \R^{r\times r}$ be matrices such that $X$ has a decomposition of the form $X = U_1^{*} G^{*} {U_2^{* \top}}$ where ${U^{* \top}_i} e = e$ for $i \in [2]$ and $|\det(G^*)| \leq \big|\det\big(G^{\#}\big)\big|$. Then there exist permutation matrices  $\Pi_1,\Pi_2$ such that 
$U_1^* = U_1^{\#} \Pi_1$, 
$U_2^* = U_2^{\#} \Pi_2$, 
and 
$G^* = \Pi_1^{\top} G^{\#} \Pi_2$.    
\end{corollary}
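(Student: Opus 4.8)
The plan is to observe that the proof of Theorem~\ref{thm:nmfmain} never actually invokes the global optimality of the competing solution $(U_1^*,G^*,U_2^*)$: the only consequence of optimality that it uses is the single scalar inequality $|\det(G^*)| \le |\det(G^{\#})|$ appearing in~\eqref{eq:detmultiplicativity}. Since this very inequality is now placed among the hypotheses, I would re-run the same argument essentially verbatim. First I would note that $\rank(X) = r$ forces all four factor matrices to have full column rank $r$, so that there exist invertible $A,B \in \R^{r\times r}$ with $U_1^* = U_1^{\#}A$, $U_2^* = U_2^{\#}B$, and $G^* = A^{-1}G^{\#}B^{-\top}$.

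Next I would translate the three structural hypotheses into constraints on $A$ and $B$, exactly as in the proof of Theorem~\ref{thm:nmfmain}. The determinant hypothesis $|\det(G^*)| \le |\det(G^{\#})|$, together with $G^* = A^{-1}G^{\#}B^{-\top}$, yields $|\det(A)|\,|\det(B)| \ge 1$ as in~\eqref{eq:detAdetBlb}. The normalization constraints ${U_i^{* \top}} e = e = {U_i^{\# \top}} e$ give $A^{\top}e = e$ and $B^{\top}e = e$ as in~\eqref{eq:l1normofrowsandcols}. Finally, nonnegativity $U_1^{\#}A \ge 0$ and $U_2^{\#}B \ge 0$ places every column of $A$ (resp.\ $B$) in $\cone^*(U_1^{\# \top})$ (resp.\ $\cone^*(U_2^{\# \top})$), and the SSC of $U_1^{\#},U_2^{\#}$ then confines these columns to the ice-cream cone $\mathcal{C}^*$, as in~\eqref{eq:rowscolscontainedindualcone}--\eqref{eq:indualcone}.

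From here the conclusion is forced. Membership in $\mathcal{C}^*$ combined with $A^{\top}e = e$ gives $|\det(A)| \le \prod_j \|A(:,j)\|_2 \le \prod_j e^{\top}A(:,j) = 1$ as in~\eqref{eq:detAdetBub}, and likewise $|\det(B)| \le 1$. Combined with $|\det(A)|\,|\det(B)| \ge 1$ this forces $|\det(A)| = |\det(B)| = 1$, so both inequalities of~\eqref{eq:detAdetBub} become equalities, placing each column of $A$ and $B$ on $\textbf{bd}(\mathcal{C}^*)$. Invoking SSC2 together with~\eqref{eq:rowscolscontainedindualcone} then pins each such column to a nonnegative multiple of a unit vector, and the normalization $A^{\top}e = B^{\top}e = e$ fixes the scaling, so that $A$ and $B$ are permutation matrices $\Pi_1,\Pi_2$. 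Substituting back gives $U_1^* = U_1^{\#}\Pi_1$, $U_2^* = U_2^{\#}\Pi_2$, and $G^* = \Pi_1^{\top}G^{\#}\Pi_2$.

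I do not anticipate a genuine obstacle, since the statement is engineered precisely to isolate the one place where the proof of Theorem~\ref{thm:nmfmain} used optimality. The only point requiring care is to verify that no later step silently re-uses optimality beyond the scalar inequality~\eqref{eq:detmultiplicativity}; a careful reading confirms that every subsequent deduction rests only on feasibility (the normalization and nonnegativity constraints) and on the already-established bounds on $\det(A)$ and $\det(B)$. Hence the weaker suboptimality hypothesis suffices, and the corollary follows.
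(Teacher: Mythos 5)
Your proof is correct and takes exactly the paper's route: the paper justifies this corollary by observing that the proof of Theorem~\ref{thm:nmfmain} uses optimality of $(U_1^*,G^*,U_2^*)$ only through the single inequality $|\det(G^*)| \leq \big|\det\big(G^{\#}\big)\big|$, which is now assumed as a hypothesis, so the whole argument (full-rank change of basis $A,B$, the constraints $A^\top e = B^\top e = e$, dual-cone membership via nonnegativity and the SSC, Hadamard's inequality forcing $|\det(A)| = |\det(B)| = 1$, and SSC2 forcing $A,B$ to be permutations) carries over verbatim. Your added check that no later step silently re-uses optimality is precisely the content of the paper's one-line justification.
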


 Can the above result be extended to the case when $r_1 \neq r_2$? As discussed in the previous section, this corresponds to a \textit{rank-deficient} case. The characterization from (\ref{eq:triNMF}) does not work because $\det(G) = 0$. 
 For $r_1 \leq r_2$, one could also try to minimize $\det\big(G^\top G\big)$ instead, and the proof of Theorem~\ref{thm:nmfmain} works for that function when $r_1 = r_2$, since $\det\big(G^\top G\big) = \det(G)^2$ for a square matrix $G$. 
 The proof does not extend directly for the rank-deficient case: in fact, 
there exist matrices $A,B$ such that 
$$ 
\det(B^{-\top}G^{\top}A^{-\top}A^{-1}GB^{-1}) < \det(G^{\top}G)
$$ 
while $\det(A) \det(B) < 1$. Take for example $B = 1, A = \begin{bmatrix}
    1 & \frac{1}{2} \\
    0 & \frac{1}{2}
\end{bmatrix}$, $G = \begin{bmatrix}
    1 \\
    0
\end{bmatrix}$. We have $C := A^{-1}G = \begin{bmatrix}
    1 \\
    0
\end{bmatrix}$, $\det(C^{\top}C) = 1 = \det(G^{\top}G)$ but $\det(A)^2 = \frac{1}{4} < 1$. 
Note that here $A$ satisfies $A^\top e = e$. 
 

\section{Order-$3$ nTDs}\label{sec:idenorder3tensors}

In this section, we provide 5 different approaches to obtain identifiable order-$3$ nTDs. 
Each approach relies on different assumptions and optimization models. We first consider unfoldings for which we provide a natural approach using directly order-2 nTD identifiability under the SSC (Theorem~\ref{thm:nmfmain}); see Section~\ref{sec:unfoldings}. 
Then we propose 4 approaches relying on slices; see Section~\ref{sec:slices}. 

These 5 approaches can be generalized to higher-order tensors, which we explain in Section~\ref{app:orderd}. We start with order-$3$ tensors to make the presentation simpler and help the reader follow the line of thoughts more easily.

From now on, we focus on the SSC using minimum-volume minimization. However, all identifiability results presented in this section directly specialize to the separable case, since separability implies the SSC. 
To avoid redundancy and keep the paper more concise, 
we do not explicitly state these identifiability results in the separable case. 
The reason we specified the identifiability for separable order-2 nTD (Theorem~\ref{th:serptrisymNMF}) is twofold:  
\begin{itemize}
    \item This provided an example of identifiability for nTD under the separability, and 
    
    \item We will design a polynomial-time and robust algorithm for  separable order-2 nTD in part II of this paper~\cite{sahaPartII2025}. It generalizes the algorithm for the symmetric case proposed in~\cite{AGHMMSWZ13}. We will use this algorithm to initialize higher-order nTD algorithms. 

\end{itemize}

Throughout this section, we will assume the tensor follows the following assumption. 
\begin{assumptions}\label{ass:generalass}
The order-$3$ tensor $\mathcal T\in \R^{n_1 \times n_2 \times n_3}$ satisfies the following conditions: 
\begin{enumerate}
    \item $\mathcal T$ has an nTD of the form $\mathcal T = (U^{\#}_1,U^{\#}_2,U^{\#}_3).\mathcal{G}^{\#}$ 
    where $U^{\#}_i \in \R_+^{n_i \times r_i}$ for all $i \in [3]$ and $\mathcal{G}^{\#} \in \R^{r_1 \times r_2 \times r_3}$, and no columns of the $U^{\#}_i$'s are zero. 
    
    \item To simplify the presentation and remove the scaling ambiguity, we assume w.l.o.g.\ that  $(U^{\#}_i)^\top e = e$  for $i \in [3]$. 
    
\end{enumerate}
\end{assumptions}

\subsection{Using unfoldings} \label{sec:unfoldings} 

Let us start with the most natural approach: 
reduce an order-$3$ nTD to an equivalent order-2 nTD using unfoldings. For this, let us recall the following property.  

\begin{property}[Unfoldings] \label{lem:changeofbasisunfoldings}
Let $\mathcal T\in \mathbb{R}^{n_1 \times n_2 \times n_3}$ be an order-$3$ tensor with $\mathcal T = (U_1,U_2,U_3).\mathcal G$ where $\mathcal G \in \mathbb{R}^{r_1 \times r_2 \times r_3}$. 
The unfolding of $\mathcal T$ along the third mode is given by
$$
\mathcal T_{(3)} \; = \;  (U_1 \otimes U_2) \, \mathcal G_{(3)} \, U_3^{\top}, 
$$
where $\mathcal G_{(3)}$ is the unfolding of $\mathcal G$ along the third mode. 
By symmetry, the same property applies to unfoldings along the first and second modes. 
\end{property}

Property~\ref{lem:changeofbasisunfoldings} allows us to apply directly Theorem~\ref{thm:nmfmain} for order-2 nTD to obtain an identifiable order-$3$ nTD. 
However, this requires a very strong conditions, namely $r_3 = r_1 r_2$, because Theorem~\ref{thm:nmfmain} only applies to order-2 nTD where the two inner ranks are equal to each other. 
Let us state the assumptions needed to obtain a first identifiability result for order-$3$ nTD. 

\begin{assumptions}\label{ass:unfold}
The order-$3$ tensor $\mathcal T\in \R^{n_1 \times n_2 \times n_3}$ satisfies Assumption~\ref{ass:generalass} and  
\begin{enumerate}
    \item $\rank(\mathcal G_{(3)}^{\#}) = r_3 = r_1 r_2$. 

    
    \item $(U_1^{\#} \otimes U_2^{\#})$ and $U^{\#}_3$ satisfy the SSC . 

\end{enumerate}
\end{assumptions}

Note that, for simplicity of the presentation, this assumption arbitrarily unfolds the tensor in the third mode. Of course, by symmetry of the problem, one can apply the same reasoning to unfoldings in the first and second modes. 
We will discuss the condition $(U_1^{\#} \otimes U_2^{\#})$ satisfies the SSC in Section~\ref{subsec:KronSSCconj}. 


By Property~\ref{lem:changeofbasisunfoldings}, $\mathcal T_{(3)} \; = \;  (U_1 \otimes U_2) \, \mathcal G_{(3)} \, U_3^{\top}$, and, under Assumption~\ref{ass:unfold}, we can recover, up to permutations, $U_1 \otimes U_2$, $\mathcal G_{(3)}$ and $U_3$ using order-2 nTDs (Theorem~\ref{thm:nmfmain}). 
It remains to recover $U_1$ and $U_2$ from their Kronecker product, $U_1 \otimes U_2$, which we show how to do below (taking into account the unknown permutation). 
Let us describe this procedure: \vspace{0.1cm} 

\fbox{%
	\parbox{0.95\linewidth}{%
    \begin{center} \vspace{-0.2cm}   
       \textbf{Procedure 0: Unique order-$3$ nTD under Assumption~\ref{ass:unfold}} 
    \end{center}
\begin{enumerate}
    \item Computation of $U^*_3$: Solve the following min-vol order-2 nTD problem  
    \begin{align}\label{eq:unfoldstep1}
    \min_{G, U_{1,2}, U_3}  \big|\det\big( G \big)\big|  \text{ such that } & 
     \mathcal T_{(3)} = U_{1,2} G U_3^\top,  \\ 
     & 
     U_{1,2}^\top e = e, 
     U_3^\top e = e \text{ and } 
     (U_{1,2}, U_3) \geq 0, \nonumber 
    \end{align}
    to obtain an optimal solution $(G^*, U^*_{1,2}, U^*_3)$.

    \item Computation of $U_1^*, U_2^*$: 
    Find a permutation $\Pi^*_{1,2}$ such that $U^*_{1,2}\Pi^*_{1,2}$ admits a decomposition $U^*_{1,2}\Pi^*_{1,2} = U_1^* \otimes U_2^*$ with $U_1^{* \top} e = e$ and $U_2^{* \top} e = e$; see 
    Property~\ref{prop:kronpdtperm} and Theorem~\ref{thm:uniquenesskronpdt} below. 
    
    \item Computation of $\mathcal{G}^*: $ Compute the matrix $\mathcal{G}_{(3)}^* = (\Pi_{1,2}^*)^\top G^*$ and fold it to form the  order-$3$ tensor $\mathcal{G}^*$. 
    \vspace{-0.1cm}  
\end{enumerate}
	}%
}


\vspace{0.1cm} 

Theorem~\ref{thm:order2unfold} shows that, under Assumption~\ref{ass:unfold}, a decomposition computed by Procedure~0 is essentially unique. 
\begin{theorem}\label{thm:order2unfold}
Let $\mathcal T\in \R^{n_1 \times n_2 \times n_3}$ be an order-$3$ tensor satisfying 
Assumption~\ref{ass:unfold}, with the corresponding $(r_1,r_2,r_3)$-nTD given by  $(U^{\#}_1,U^{\#}_2,U^{\#}_3).\mathcal{G}^{\#}$ where $r_3 = r_1 r_2$.  
Then for any other decomposition $(U^{*}_1,U^{*}_2,U^{*}_3).\mathcal{G}^{*}$ obtained with Procedure~0, 
there exist permutation matrices $\Pi_1,\Pi_2,\Pi_3$ such that $U^*_i = U^{\#}_i \Pi_i$ for all $i \in [3]$ and $\mathcal{G}^* = (\Pi_1^\top,\Pi_2^\top,\Pi_3^\top).\mathcal{G}^{\#}$. 
\end{theorem}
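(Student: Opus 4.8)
The plan is to reduce the order-$3$ problem to the order-$2$ nTD identifiability result (Theorem~\ref{thm:nmfmain}) via the third-mode unfolding, and then disentangle the recovered Kronecker factor $U_1^{\#} \otimes U_2^{\#}$ into its two components. The three steps of Procedure~0 mirror the three conclusions to be proved: identifiability of $U_3$ and of the Kronecker block from an order-$2$ subproblem, separation of the Kronecker block into $U_1$ and $U_2$, and reconstruction of the core.

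First I would verify that the hypotheses of Theorem~\ref{thm:nmfmain} hold for the factorization $\mathcal T_{(3)} = (U_1^{\#} \otimes U_2^{\#}) \, \mathcal G_{(3)}^{\#} \, (U_3^{\#})^\top$ supplied by Property~\ref{lem:changeofbasisunfoldings}. The two outer factors have inner dimensions $r_1 r_2$ and $r_3$, which coincide by Assumption~\ref{ass:unfold}, so this is a genuine square-core order-$2$ nTD; its rank is $r_3 = r_1 r_2$ because $\mathcal G_{(3)}^{\#}$ has rank $r_3$ and both outer factors have full column rank (the SSC implies full rank). The normalization $(U_1^{\#} \otimes U_2^{\#})^\top e = e$ follows from $(U_i^{\#})^\top e = e$ together with $e_{n_1 n_2} = e_{n_1} \otimes e_{n_2}$ and $(A \otimes B)^\top = A^\top \otimes B^\top$. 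Since both $U_1^{\#} \otimes U_2^{\#}$ and $U_3^{\#}$ satisfy the SSC, Theorem~\ref{thm:nmfmain} (or Corollary~\ref{corr:suboptimalsoltn}) applies to the optimal solution $(G^*, U_{1,2}^*, U_3^*)$ of~\eqref{eq:unfoldstep1}, yielding permutation matrices $\Pi_{1,2}, \Pi_3$ with $U_{1,2}^* = (U_1^{\#} \otimes U_2^{\#}) \Pi_{1,2}$, $U_3^* = U_3^{\#} \Pi_3$, and $G^* = \Pi_{1,2}^\top \mathcal G_{(3)}^{\#} \Pi_3$. This already establishes the claim for the third mode.

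The crux is Step~2: Procedure~0 chooses a permutation $\Pi_{1,2}^*$ for which $U_{1,2}^* \Pi_{1,2}^* = U_1^* \otimes U_2^*$ with column-normalized factors. Writing $\Pi := \Pi_{1,2} \Pi_{1,2}^*$ gives $(U_1^{\#} \otimes U_2^{\#}) \Pi = U_1^* \otimes U_2^*$, that is, two Kronecker products that agree up to a column permutation and whose generating factors are all column-normalized. Here I would invoke the uniqueness of Kronecker factorization (Property~\ref{prop:kronpdtperm} and Theorem~\ref{thm:uniquenesskronpdt}): a column equality $a_i \otimes b_j = c \otimes d$ for normalized nonzero vectors forces $c = a_i$ and $d = b_j$, and matching these across all columns forces $\Pi$ to have the product form with $U_1^* = U_1^{\#} \Pi_1$ and $U_2^* = U_2^{\#} \Pi_2$ for permutations $\Pi_1, \Pi_2$. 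I expect this to be the main obstacle: one must rule out non-product permutations and confirm that the normalization kills the residual scaling ambiguity $a \otimes b = (\alpha a) \otimes (\alpha^{-1} b)$, so that the separation is genuinely unique rather than merely possible. Since this is precisely the content of the cited Kronecker results, the work here reduces to checking their hypotheses apply.

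Finally, for the core I would combine $\mathcal G_{(3)}^* = (\Pi_{1,2}^*)^\top G^*$ from Step~3 with $G^* = \Pi_{1,2}^\top \mathcal G_{(3)}^{\#} \Pi_3$ and the product structure of $\Pi$ to obtain $\mathcal G_{(3)}^* = (\Pi_1 \otimes \Pi_2)^\top \mathcal G_{(3)}^{\#} \Pi_3 = (\Pi_1^\top \otimes \Pi_2^\top) \mathcal G_{(3)}^{\#} \Pi_3$. Reading this through Property~\ref{lem:changeofbasisunfoldings} in reverse — the third-mode unfolding of $(\Pi_1^\top,\Pi_2^\top,\Pi_3^\top).\mathcal G^{\#}$ is exactly $(\Pi_1^\top \otimes \Pi_2^\top) \mathcal G_{(3)}^{\#} \Pi_3$ — identifies $\mathcal G^* = (\Pi_1^\top,\Pi_2^\top,\Pi_3^\top).\mathcal G^{\#}$ after folding, completing the proof.
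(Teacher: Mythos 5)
Your proposal is correct and follows essentially the same route as the paper's proof: apply Theorem~\ref{thm:nmfmain} to the third-mode unfolding to recover $U_3^{\#}$ and the Kronecker block up to permutations, invoke Property~\ref{prop:kronpdtperm} (with Theorem~\ref{thm:uniquenesskronpdt}) to split $\Pi_{1,2}\Pi_{1,2}^* = \Pi_1 \otimes \Pi_2$ and obtain $U_i^* = U_i^{\#}\Pi_i$ for $i \in [2]$, then fold $(\Pi_1 \otimes \Pi_2)^\top \mathcal G_{(3)}^{\#}\Pi_3$ to identify the core. Your additional verifications (the rank of $\mathcal T_{(3)}$ and the normalization $(U_1^{\#} \otimes U_2^{\#})^\top e = e$) are details the paper leaves implicit, and they check out.
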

\begin{proof}
By Assumption~\ref{ass:unfold}, $(U_1^{\#} \otimes U_2^{\#})$ and $U^{\#}_3$ satisfy the SSC and $\rank(\mathcal G^{\#}_{(3)}) = r_3 = r_1 r_2$, 
hence solving~\eqref{eq:unfoldstep1} will find matrices $(U^*_{1,2},G^*,U^*_3)$ such that there exist permutation matrices $\Pi_{1,2}, \Pi_3$ such that $U^*_{1,2} = (U_1^{\#} \otimes U_2^{\#})\Pi_{1,2}$, $U^*_3 = U^{\#}_3\Pi_3$ and $G^* = \Pi_{1,2}^\top G_{(3)}^{\#} \Pi_3$ by Theorem~\ref{thm:nmfmain}. 

Since, at the end of step~2, we have recovered matrices $U_1^*,U_2^*$ such that 
$$
(U_1^* \otimes U_2^*) = U^*_{1,2}\Pi^*_{1,2} =  (U_1^{\#} \otimes U_2^{\#})\Pi_{1,2}\Pi^*_{1,2}. 
$$ 
 Property~\ref{prop:kronpdtperm}, see below, shows  that this implies that there exist permutation matrices $\Pi_1,\Pi_2$ such that $U_i^* = U_i^{\#}\Pi_i$ for all $i \in [2]$. This also gives us that $\Pi_{1,2}^* \Pi_{1,2} = \Pi_1 \otimes \Pi_2$. Then we have that $\mathcal G_{(3)}^* = (\Pi_{1,2}^*)^\top G^* = (\Pi_1 \otimes \Pi_2)^\top G_{(3)}^{\#} \Pi_3$. On folding this matrix to form an order-$3$ tensor, we can conclude that $\mathcal{G}^* = (\Pi_1^\top, \Pi_2^\top, \Pi_3^\top).\mathcal{G}^{\#}$.
\end{proof} 


The second step of Procedure~0 requires to find a permutation $\Pi_{1,2}^*$ such that the following problem has a feasible solution: given $U_{1,2}^* \Pi_{1,2}^* \in \R^{n_1n_2 \times r_1r_2}$, 
find ${U_i^*}^\top e = e$, $U_i^* \geq 0$ for $i \in [2]$ such that $U_{1,2}^* \Pi_{1,2}^* = U_1^* \otimes U_2^*$. 
Theorem~\ref{thm:uniquenesskronpdt} and  Property~\ref{prop:kronpdtperm} below imply that for any such permutation, $U_1^*$ and $U_2^*$ will be permutations of 
$U_1^{\#}$ and $U_2^{\#}$, respectively. 

Trying all possible permutations of $[r_1r_2]$ for $\Pi_{1,2}^*$ is of course impractical. We will explain in part II of this paper~\cite{sahaPartII2025} how this can be done efficiently. In a nutshell, this can be done by looking at $U_{1,2}^*$ column-wise, which requires to check at most $r_1r_2$ possibilities.  Moreover, this issue disappears when using the all-at-once procedure described after Property~\ref{prop:kronpdtperm}.

\begin{theorem}\label{thm:uniquenesskronpdt}
Let $X$ be a $n_1n_2 \times r_1r_2$ 
matrix and  
with $X = U_1 \otimes U_2$ where $U_1 \in \R^{n_1 \times r_1}$ and $U_2 \in \R^{n_2 \times r_2}$. 
Such a decomposition can be computed trivially, and is unique 
if $U_1^\top e = U_2^ \top e = e$.  
\end{theorem}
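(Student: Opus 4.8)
The plan is to exploit the elementary fact that folding a single column of $X$ recovers a rank-one matrix whose two factors are one column of $U_1$ and one column of $U_2$. By Definition~\ref{def:KronPdt}, the column of $X = U_1 \otimes U_2$ at position $p = j + (i-1)r_2$ equals $U_1(:,i) \otimes U_2(:,j) = \vect\big(U_1(:,i)\,U_2(:,j)^\top\big)$. Hence, writing $M_p \in \R^{n_1 \times n_2}$ for the matrix obtained by folding the $p$-th column of $X$ column-wise, we have $M_p = U_1(:,i)\,U_2(:,j)^\top$, which is rank one; both of its factors are nonzero, since the normalization $U_1^\top e = U_2^\top e = e$ forbids zero columns.

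First I would record the trivial computation. Each $M_p$ being rank one, a column of $U_1$ is read off (up to a scalar) as any nonzero column of $M_p$ and a column of $U_2$ as any nonzero row. Letting $j=1$ and $i$ range over $[r_1]$ (that is, $p \in \{1, r_2+1, \dots, (r_1-1)r_2 + 1\}$) recovers all columns of $U_1$, while letting $i = 1$ and $j$ range over $[r_2]$ (that is, $p \in [r_2]$) recovers all columns of $U_2$; the leftover scalar on each recovered column is then pinned down by imposing $U_1(:,i)^\top e = 1$ and $U_2(:,j)^\top e = 1$. This is immediate, which justifies ``computed trivially''.

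The substance of the statement is uniqueness, which I would establish as follows. Suppose $X = U_1 \otimes U_2 = V_1 \otimes V_2$ with both pairs normalized. Folding the column at position $p = j + (i-1)r_2$ on both sides yields $U_1(:,i)\,U_2(:,j)^\top = V_1(:,i)\,V_2(:,j)^\top$ for all $i,j$. As these are nonzero rank-one matrices, there is a scalar $c_{ij} \ne 0$ with $V_1(:,i) = c_{ij}\,U_1(:,i)$ and $V_2(:,j) = c_{ij}^{-1}\,U_2(:,j)$. The decisive step is to decouple the scalars: the left-hand side of the first relation is independent of $j$ and $U_1(:,i) \ne 0$, so $c_{ij}$ does not depend on $j$, say $c_{ij} = c_i$; plugging this into the second relation makes $V_2(:,j) = c_i^{-1}\,U_2(:,j)$ independent of $i$, forcing $c_i = c$ for a single constant $c$. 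Thus $V_1 = c\,U_1$ and $V_2 = c^{-1}\,U_2$. Comparing column sums through the normalization closes the argument: $1 = V_1(:,i)^\top e = c\,U_1(:,i)^\top e = c$, so $c = 1$ and $(V_1,V_2) = (U_1,U_2)$.

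The only delicate point, which I expect to be the main obstacle, is precisely this decoupling of the family $\{c_{ij}\}$ into one global scalar; it is where the absence of zero columns --- guaranteed by the normalization --- is essential. I would also add a remark that the normalization cannot be dropped: without it the Kronecker factorization is unique only up to the one-parameter ambiguity $(U_1,U_2)\mapsto(\lambda U_1,\lambda^{-1}U_2)$, which the constraints $U_1^\top e = U_2^\top e = e$ remove.
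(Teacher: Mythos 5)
Your proof is correct and takes essentially the same route as the paper's: fold each column of $X$ into the rank-one matrix $U_1(:,i)\,U_2(:,j)^\top$, invoke uniqueness of rank-one factorizations up to a scalar, and use the normalization $U_1^\top e = U_2^\top e = e$ to eliminate that scalar. One small simplification: the ``decisive'' decoupling of the family $\{c_{ij}\}$ is unnecessary, since applying $e^\top$ to $V_1(:,i) = c_{ij}\,U_1(:,i)$ already forces $c_{ij}=1$ for every pair $(i,j)$ — indeed the paper shortcuts the whole argument by noting $\mat\big(X_{:,(i,j)}\big)e = U_1(:,i)$ and $\mat\big(X_{:,(i,j)}\big)^\top e = U_2(:,j)$.
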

\begin{proof}
From Definition~\ref{def:KronPdt}, we get that the $(i,j)$th column of $X$ is given by 
$$
X_{:,(i,j)} = U_1(:,i) \otimes U_2(:,j) \quad 
\text{for all } i \in [r_1] \text{ and } j \in [r_2]. 
$$ 
 Rewriting $X_{:,(i,j)}$ as an $r_1 \times r_2$ matrix, denoted by $\mat(X_{:,(i,j)})$, we get that
\begin{equation}\label{eq:U1U2top}
\mat(X_{:,(i,j)}) = U_1(:,i) U_2(:,j)^\top \quad 
\text{for all } i \in [r_1] \text{ and } j \in [r_2].      
\end{equation}
Note that $\mat(X_{:,(i,j)}) \neq 0$ for all $(i,j)$ since the columns of $U_1$ and $U_2$ are non-zero (their entries sum to one). 
Rank-one matrix decompositions are unique, up to scaling, and easy to compute. In fact, given two vectors $u,v$ and $A = uv^\top$ with $A(i,j) \neq 0$ for some $(i,j)$, all rank-one approximation of $A = u' v'^\top$ have the form $u' = \alpha A(:,j)$ 
and $v' = \beta A(i,:)$ for any $\alpha, \beta$ such that 
$\alpha \beta = A(i,j)^{-1}$.  
Since $U_i^\top e = e$ for $i \in [1,2]$, the scaling degree of freedom is absent, and the columns of $U_1$ and $U_2$ are uniquely determined using~\eqref{eq:U1U2top}. 
Another way to see uniqueness is to notice that $\mat(X_{:,(i,j)})e = U_1(:,i)$, and $\mat(X_{:,(i,j)})^\top e = U_2(:,j)$.  
\end{proof} 

In Part II of this paper~\cite{sahaPartII2025}, we will design an algorithm for an optimization version of this problem that minimizes $\|X - U \otimes V\|_F^2$ under the above constraints. Note that, in the unconstrained case, this problem can be solved via an SVD computation~\cite{VanLoan1993}. 

\begin{property}\label{prop:kronpdtperm}
Let $X$ be an $n_1n_2 \times r_1r_2$ matrix with a decomposition of the form $X = U^{\#}_1 \otimes U^{\#}_2$ where $U^{\#}_1 \in \R^{n_1 \times r_1}$ and $U^{\#}_2 \in \R^{n_2 \times r_2}$ are full column rank matrices. Let $Y = X \Pi$ where $\Pi$ is a permutation matrix of dimension $r_1 r_2$, 
and $Y = U^*_1 \otimes U^*_2$ where $U^*_1 \in \R^{n_1 \times r_1}$ and $U^*_2 \in \R^{n_2 \times r_2}$ are full column rank matrices. Moreover, $(U_1^*)^\top e = (U_1^{\#})^\top e = e$ and $(U^*_2)^\top e = (U_2^{\#})^\top e = e$. Then there exist permutation matrices $\Pi_1$ and $\Pi_2$ such that $U_1^* = U_1^{\#} \Pi_1$ and $U_2^* = U_2^{\#} \Pi_2$. 
\end{property}
\begin{proof}
Let $\pi$ be the permutation map on the indices of the columns of $X$ induced by multiplication by the permutation matrix $\Pi$ on the right. Then for all $(i,j) \in [r_1r_2]$, there exist $i',j'$ such that $\pi(i,j) = (i',j')$. 
Then 
$$
(U_1^* \otimes U_2^*)_{:,(i',j')} = (U_1^*)_{:,i'} \otimes (U^*_2)_{:,j'} = Y_{:,(i',j')} = X_{:,(i,j)} = (U_1^{\#})_{:,i} \otimes (U_2^{\#})_{:,j}.
$$ 
Using Theorem \ref{thm:uniquenesskronpdt}, this gives us that $(U_1^*)_{:,i'} = (U_1^{\#})_{:,i}$ and $(U^*_2)_{:,j'} = (U_2^{\#})_{:,j}$ for all $(i,j) \in [r_1r_2]$. 
If now there exists $i''\ne i'$ such that $\pi(i,k) = (i'',k')$ for some $k,k'$ then $(U_1^*)_{:,i} = (U_1^{\#})_{:,i'} = (U_1^{\#})_{:,i''}$ which is not possible since $U_1^{\#}$ has full column rank. As a consequence, $\pi$ maps $(i,k)$ to $(i',\cdot)$ independently from $k$, thus inducing a permutation on the indexes in the first position.
The same reasoning can be repeated for the indexes in the second position, and conclude that $\pi  = \pi_1\otimes \pi_2$ where $\pi(i,j) = (\pi_1(i), \pi_2(j))$. 

In terms of matrices, this can be written as $\Pi = \Pi_1\otimes \Pi_2$,  where 
 $\Pi_1 \in S_{r_1}$, $\Pi_2 \in S_{r_2}$ are permutation matrices and  $U_1^* = U_1^{\#} \Pi_1$,  $U_2^* = U_2^{\#} \Pi_2$. 
\end{proof}

\paragraph{All-at-once procedure via penalization} 

 Instead of the two-step approach described in Procedure 0, one can consider the following all-at-once procedure for computing the nTD via penalization: 
 for any $\lambda > 0$, solve the following min-vol order-2 nTD problem  
    \begin{align}\label{eq:unfoldord3alt}
    \min_{\mathcal G_{(3)}, U_{1,2}, U_1, U_2 , U_3}  & \big|\det\big(\mathcal G_{(3)}\big)\big| + \lambda \|U_{1,2} - U_1 \otimes  U_2\|_F^2  \\  \text{ such that }  & 
     \mathcal T_{(3)} = U_{1,2} \mathcal G_{(3)} U_3^\top,  
     U_i^\top e = e \text{ and } U_i \geq 0 \text{ for all } i \in [3], \nonumber 
    \end{align}
    to obtain an optimal solution $(\mathcal{G}^*_{(3)}, U^*_1, U^*_2, U^*_3)$. 
The following theorem shows that, under Assumption~\ref{ass:unfold}, an nTD obtained by solving~\eqref{eq:unfoldord3alt} is essentially unique.

\begin{theorem} \label{thm:unfoldsorder3alternate} 
Let $\mathcal T\in \R^{n_1 \times n_2 \times n_3}$ be an order-$3$ tensor satisfying 
Assumption~\ref{ass:unfold}, with the corresponding $(r_1,r_2,r_3)$-nTD given by  $(U^{\#}_1,U^{\#}_2,U^{\#}_3,\mathcal{G}^{\#})$ where $r_3 = r_1 r_2$.  
Then for any other decomposition $(U^{*}_1,U^{*}_2,U^{*}_3).\mathcal{G}^{*}$ obtained as an optimal solution of~\eqref{eq:unfoldord3alt},  
there exist permutation matrices $\Pi_1,\Pi_2,\Pi_3$ such that $U^{\#}_i = U^*_i \Pi_i$ for all $i \in [3]$ and $\mathcal{G}^{\#} = (\Pi_1^\top,\Pi_2^\top,\Pi_3^\top).\mathcal{G}^*$. 
\end{theorem}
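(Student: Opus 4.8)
The plan is to reduce the penalized all-at-once problem~\eqref{eq:unfoldord3alt} to the two-step Procedure~0, exploiting the fact that the order-2 min-vol guarantee (Theorem~\ref{thm:nmfmain}) already controls $|\det(\mathcal G_{(3)})|$ over the entire feasible set, so that the penalty term is squeezed to zero at the optimum. First I would record that the groundtruth is feasible for~\eqref{eq:unfoldord3alt}: taking $U_{1,2} = U_1^{\#}\otimes U_2^{\#}$, $U_i = U_i^{\#}$ and $\mathcal G_{(3)} = \mathcal G_{(3)}^{\#}$ gives $\mathcal T_{(3)} = U_{1,2}\mathcal G_{(3)}U_3^\top$ by Property~\ref{lem:changeofbasisunfoldings}, satisfies the nonnegativity and normalization constraints (note $(U_1^{\#}\otimes U_2^{\#})^\top e = (U_1^{\#\top}e)\otimes(U_2^{\#\top}e) = e$), and makes the penalty vanish. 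Hence the optimal value of~\eqref{eq:unfoldord3alt} is at most $|\det(\mathcal G_{(3)}^{\#})|$. I would also record that $\rank(\mathcal T_{(3)}) = r_3$, which follows from Assumption~\ref{ass:unfold} since $\rank(\mathcal G_{(3)}^{\#}) = r_3$ and the SSC factors are full column rank.

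Next I would establish a matching lower bound. For any feasible point $(\mathcal G_{(3)}, U_{1,2}, U_1, U_2, U_3)$, the triple $(U_{1,2}, \mathcal G_{(3)}, U_3)$ is itself feasible for the min-vol order-2 nTD problem~\eqref{eq:unfoldstep1} applied to $X = \mathcal T_{(3)}$, because $U_{1,2}, U_3$ are nonnegative, normalized, and $\mathcal T_{(3)} = U_{1,2}\mathcal G_{(3)}U_3^\top$. Since $U_1^{\#}\otimes U_2^{\#}$ and $U_3^{\#}$ satisfy the SSC and $\rank(\mathcal T_{(3)}) = r_3$, Theorem~\ref{thm:nmfmain} identifies the groundtruth as the essentially unique minimizer of~\eqref{eq:unfoldstep1}, with optimal value $|\det(\mathcal G_{(3)}^{\#})|$; in particular $|\det(\mathcal G_{(3)})| \geq |\det(\mathcal G_{(3)}^{\#})|$. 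As the penalty is nonnegative, every feasible objective value of~\eqref{eq:unfoldord3alt} is at least $|\det(\mathcal G_{(3)}^{\#})|$. Sandwiching the two bounds forces the optimal value to equal $|\det(\mathcal G_{(3)}^{\#})|$, so at any optimizer $(\mathcal G_{(3)}^*, U_{1,2}^*, U_1^*, U_2^*, U_3^*)$ we must have \emph{both} $|\det(\mathcal G_{(3)}^*)| = |\det(\mathcal G_{(3)}^{\#})|$ \emph{and} penalty $=0$.

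The extraction step then mirrors the proof of Theorem~\ref{thm:order2unfold}. The vanishing penalty gives $U_{1,2}^* = U_1^*\otimes U_2^*$, while the tight determinant makes $(U_{1,2}^*, \mathcal G_{(3)}^*, U_3^*)$ an optimal solution of~\eqref{eq:unfoldstep1}; applying Theorem~\ref{thm:nmfmain} (equivalently Corollary~\ref{corr:suboptimalsoltn}) yields permutation matrices $\Pi_{1,2}, \Pi_3$ with $U_{1,2}^* = (U_1^{\#}\otimes U_2^{\#})\Pi_{1,2}$, $U_3^* = U_3^{\#}\Pi_3$ and $\mathcal G_{(3)}^* = \Pi_{1,2}^\top \mathcal G_{(3)}^{\#}\Pi_3$. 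Feeding $U_1^*\otimes U_2^* = (U_1^{\#}\otimes U_2^{\#})\Pi_{1,2}$ into Property~\ref{prop:kronpdtperm} produces permutations $\Pi_1,\Pi_2$ with $U_i^* = U_i^{\#}\Pi_i$ and $\Pi_{1,2} = \Pi_1\otimes \Pi_2$; folding $\mathcal G_{(3)}^* = (\Pi_1\otimes\Pi_2)^\top \mathcal G_{(3)}^{\#}\Pi_3$ back into an order-$3$ tensor delivers the claimed relation between $\mathcal G^{\#}$ and $\mathcal G^*$ (after renaming each $\Pi_i$ by its transpose to match the orientation in the statement).

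The delicate point I expect to be the main obstacle is proving that the penalty is \emph{exactly} zero at the optimum rather than merely small. This rests entirely on the lower bound $|\det(\mathcal G_{(3)})| \geq |\det(\mathcal G_{(3)}^{\#})|$ holding across the whole feasible region, which in turn requires $U_{1,2}$ to be constrained to be nonnegative and to satisfy $U_{1,2}^\top e = e$, as in~\eqref{eq:unfoldstep1}; these are exactly the hypotheses that let $(U_{1,2},\mathcal G_{(3)},U_3)$ be a feasible point of the order-2 problem and invoke the equality case of Theorem~\ref{thm:nmfmain}. The constraint is essential for coupling the penalty to the volume: if $U_{1,2}$ were left free, the rescaling $U_{1,2}\mapsto (1+\delta)U_{1,2}$, $\mathcal G_{(3)}\mapsto (1+\delta)^{-1}\mathcal G_{(3)}$ would decrease $|\det(\mathcal G_{(3)})|$ linearly in $\delta$ while increasing the penalty only quadratically, so the groundtruth would no longer be optimal. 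Thus the normalization and nonnegativity of $U_{1,2}$ are precisely what make the soft Kronecker penalty behave, at the optimum, like a hard constraint.
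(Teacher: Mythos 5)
Your proof is correct and follows essentially the same route as the paper's: feasibility of the groundtruth (with zero penalty) gives the upper bound, the order-2 identifiability result (Theorem~\ref{thm:nmfmain} via Corollary~\ref{corr:suboptimalsoltn}) forces both $|\det(\mathcal{G}^*_{(3)})| = |\det(\mathcal{G}^{\#}_{(3)})|$ and a vanishing penalty (hence $U_{1,2}^* = U_1^*\otimes U_2^*$), and Property~\ref{prop:kronpdtperm} then extracts $\Pi_1,\Pi_2$ exactly as in the proof of Theorem~\ref{thm:order2unfold}. Your closing observation is also well taken: the invocation of Corollary~\ref{corr:suboptimalsoltn} requires $U_{1,2}\geq 0$ and $U_{1,2}^\top e = e$, a constraint the paper's own proof uses implicitly even though the constraint line of~\eqref{eq:unfoldord3alt} as written only mentions $U_i$ for $i\in[3]$, and your rescaling argument correctly shows identifiability would fail without it.
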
 
\begin{proof}
We first want to show that there exists a feasible solution to the optimization problem in (\ref{eq:unfoldord3alt}). Since $(U^{\#}_1,U^{\#}_2,U^{\#}_3).\mathcal{G}^{\#}$ is the nTD induced by Assumptions \ref{ass:unfold}, using Property \ref{lem:changeofbasisunfoldings}, we get that
\begin{equation}\label{eq:order3groundstate}
  \mathcal T_{(3)} = U_{1,2}^{\#} \mathcal{G}^{\#}_{(3)} (U_3^{\#})^\top \text{ where }  U_{1,2}^{\#} = U^{\#}_1 \otimes U^{\#}_2  .
\end{equation}
It follows that $(U^{\#}_1,U^{\#}_2,U^{\#}_3,\mathcal{G}_{(1,2)}^{\#})$ is indeed a feasible solution to the optimization problem in (\ref{eq:unfoldord3alt}) for all $\lambda$.  
Let $(U^*_1,U^*_2,U^*_3,\mathcal{G}_{(3)}^*)$ be an optimal solution to (\ref{eq:unfoldord3alt}). Then
\begin{equation}\label{eq:unfoldingeq3}
\begin{split}
    |\det\big(\mathcal{G}^*_{(3)}\big)| &\leq |\det\big(\mathcal{G}^*_{(3)}\big)| + \lambda \|U_{1,2}^* - U^*_1 \otimes U^*_2\|_F^2 \\
    &\leq |\det\big(\mathcal{G}^{\#}_{(3)}\big)| + \lambda \|U_{1,2}^{\#} - U^{\#}_1 \otimes U^{\#}_2\|_F^2 = |\det\big(\mathcal{G}^{\#}_{(3)}\big)|.   
\end{split}
\end{equation}
The last equality follows from the fact that $ U_{1,2}^{\#} = U^{\#}_1 \otimes U^{\#}_2$ as stated in (\ref{eq:order3groundstate}).
 Since by Assumption \ref{ass:unfold},  $U^{\#}_1\otimes U^{\#}_2$ and $U_3^\#$ satisfy the SSC, then $\rank(U^{\#}_1\otimes U^{\#}_2) = r_1r_2 = r_3 = \rank(U^{\#}_3)$. 
 This implies $\rank(\mathcal T_{(3)}) = r_3$,  and using Corollary~\ref{corr:suboptimalsoltn}, there exist permutation matrices $\Pi_{12}, \Pi_3$ such that 
\begin{equation}\label{eq:permutationkronpdt}
    U_{1,2}^* = U_{1,2}^{\#} \Pi_{12}, U_3^* = U_3^{\#} \Pi_3 \text{ and } G^*_{(3)} = (\Pi_{12})^\top G^{\#}_{(3)} \Pi_3.
\end{equation}
From this, it follows that $|\det\big(\mathcal{G}^*_{(3)}\big)| = |\det\big(G^{\#}_{(3)}\big)|$ and hence, using (\ref{eq:unfoldingeq3}), $U_{1,2}^* = U^*_1 \otimes U^*_2$. 

Since $ U_{1,2}^* = U^*_1 \otimes U^*_2 =  (U^{\#}_1 \otimes U^{\#}_2)\Pi_{12}$, using 
Proposition~\ref{prop:kronpdtperm}, there exist permutation matrices $\Pi_1, \Pi_2$ such that $U^*_1 = U_1^{\#} \Pi_1$ and $U^*_2 = U_2^{\#} \Pi_2$. This also gives us that $\mathcal{G}_{(1,2)}^* = (\Pi_1 \otimes \Pi_2)^\top \mathcal{G}_{(1,2)}^{\#} \Pi_3 $. Rewriting this in tensor form gives $\mathcal{G}^* = (\Pi_1^\top,\Pi_2^\top,\Pi_3^\top).\mathcal{G}^{\#}$.

\end{proof}


\subsection{Using slices} \label{sec:slices} 

In the previous, we provided an identifiability result for order-$3$ nTD using unfoldings. However, this result is not very satisfactory as it applies only when one rank of the order-$3$ nTD is equal to the product of the other two. In this section, we provide four more practical   identifiability results relying on slices of the input tensor. 
 For that, let us recall the following property. 

\begin{property}[Slices]\label{lem:changeofbasisslices}
Let $\mathcal T\in \mathbb{R}^{n_1 \times n_2 \times n_3}$ be an order-$3$ tensor with $\mathcal T = (U_1,U_2,U_3).\mathcal G$ where $\mathcal G \in \mathbb{R}^{r_1 \times r_2 \times r_3}$.  
The $j$th slice of $\mathcal T$ along the third mode is given by
$$
\mathcal T^{(3)}_j = U_1\bigg(\sum_{k \in [r_3]}(U_3)_{j,k}\mathcal G^{(3)}_k\bigg)U_2^{\top}, 
$$
where $\mathcal G^{(3)}_k$ is the $k$th slice of $\mathcal G$ along the third mode. 
By symmetry, the same property applies to the slices along the first and second modes.  
\end{property}

\subsubsection{Approach 1: two modes of the input tensor have one slice with maximum rank} \label{sec:Idea1}

In this section, we give identifiability results for order-$3$ nTD under the assumption that there exist two modes of the given tensor that have one slice with maximum possible rank. 
We first state the assumptions explicitly and then in Remark \ref{rmk:maxpossiblernk} we explain why these slices have maximum rank. 

\begin{assumptions}\label{ass:order3idea1}
The order-$3$ tensor $\mathcal T\in \R^{n_1 \times n_2 \times n_3}$ satisfies Assumption~\ref{ass:generalass} and  
\begin{enumerate}
    \item $r_3 \leq r = r_1 = r_2$. 
    
    \item $U^{\#}_i$ satisfies the SSC for $i \in [3]$. 
    
    \item There exists $i_2 \in [n_2]$ and $i_3 \in [n_3]$ such that  $\rank(\mathcal T^{(3)}_{i_3}) = r$ and $\rank(\mathcal T^{(2)}_{i_2})  = r_3$.
\end{enumerate}
\end{assumptions}

Note that, for simplicity of the presentation, this assumption arbitrarily chooses modes $2$ and $3$ to have at least one slice of maximum rank. Of course, by symmetry of the problem, any combination of two modes can be chosen. 

\begin{remark}[Why maximal rank?] \label{rmk:maxpossiblernk}
Since $\mathcal T = (U^{\#}_1,U^{\#}_2,U^{\#}_3).\mathcal{G}^{\#}$, using Property \ref{lem:changeofbasisslices}, any slice $\mathcal T^{(3)}_i $ has a decomposition of the form $\mathcal T^{(3)}_i = U_1 S_i U_2^\top$ for some $r$-by-$r$ matrix $S_i$, and hence $\rank(\mathcal T^{(3)}_i) \leq r$. 
Therefore Assumption~\ref{ass:order3idea1} requires that there exists $i_3 \in [n_3]$ such that $\mathcal T^{(3)}_{i_3}$ attains the maximum possible rank, $r$. The same reasoning applies to $\mathcal T^{(2)}_{i_2}$ for some $i_2 \in [n_2]$. This justifies the choice of the title for this section.  
\end{remark}

By Property~\ref{lem:changeofbasisslices}, $\mathcal T^{(3)}_j = U_1\left(\sum_{k \in [r_3]}(U_3)_{j,k}\mathcal G^{(3)}_k\right)U_2^{\top}$, and, under Assumption~\ref{ass:order3idea1}, we can recover, up to permutations, $U_1$ and $U_2$ using order-2 nTDs (Theorem~\ref{thm:nmfmain}). 
It remains to recover $U_3$ and $\mathcal G$, which can be done via an identifiable min-vol NMF formulation (Theorem~\ref{thm:idenminvol}).  
Let us describe this two-step procedure:

\vspace{0.1cm} 

\fbox{%
	\parbox{0.95\linewidth}{%
    \begin{center} \vspace{-0.2cm}   
       \textbf{Procedure 1: Unique order-$3$ nTD under Assumption~\ref{ass:order3idea1}} 
    \end{center}
\begin{enumerate}
    \item Computation of $U^*_1, U^*_2$: Solve the following min-vol order-2 nTD problem  
    \begin{equation}\label{eq:idea1tensororder3}
    \min_{U_1,U_2,S_{i_3}}  |\det(S_{i_3})| \text{ such that } 
     \mathcal T_{i_3}^{(3)} = U_1S_{i_3}U_2^\top,  
     U_i^\top e = e \text{ and } U_i \geq 0 \text{ for } i \in [2], 
    \end{equation}
    to obtain an optimal solution $ (U^*_1,U^*_2,S^*_{i_3})$. 

    \item Computation of $U^*_3$: Form the matrix $T^*_{i_2} = (U^*_1)^{\dagger} \mathcal T_{i_2}^{(2)}$ where $U^\dagger$ denotes the Moore-Penrose inverse of $U$. Then solve the following min-vol NMF problem  \begin{equation}\label{eq:idea1tensororder3b}
    \min_{U_3, S_{i_2}}  \det(S_{i_2}^\top S_{i_2}) \quad \text{ such that }  \quad 
     T^*_{i_2} = S_{i_2}U_3^\top,  
     U_3^\top e = e \text{ and } U_3 \geq 0, 
\end{equation} 
to obtain an optimal solution $(U^{*}_3, S_{i_2}^*)$. 


    \item Computation of $\mathcal{G}^* = ((U^{*}_1)^{\dagger},(U^{*}_2)^{\dagger},(U^{*}_3)^{\dagger}).T$.   \vspace{-0.1cm}  
\end{enumerate}
	}%
}


\vspace{0.1cm} 

Theorem~\ref{thm:idea1order3} shows show that, under Assumption~\ref{ass:order3idea1}, a decomposition computed by this procedure is essentially unique. 
\begin{theorem}\label{thm:idea1order3} 
Let $\mathcal T\in \R^{n_1 \times n_2 \times n_3}$ be an order-$3$ tensor satisfying 
Assumption~\ref{ass:order3idea1}, with the corresponding $(r,r,r_3)$-nTD given by  $(U^{\#}_1,U^{\#}_2,U^{\#}_3,\mathcal{G}^{\#})$. 
Then for any other decomposition $(U^{*}_1,U^{*}_2,U^{*}_3).\mathcal{G}^{*}$ obtained with Procedure~1, 
there exist permutation matrices $\Pi_1,\Pi_2,\Pi_3$ such that $U^{*}_i = U^{\#}_i \Pi_i$ for all $i \in [3]$ and $\mathcal{G}^{*} = (\Pi_1^\top,\Pi_2^\top,\Pi_3^\top).\mathcal{G}^{\#}$. 
\end{theorem}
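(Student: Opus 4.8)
The plan is to mirror the three steps of Procedure~1, showing that step~1 recovers $U_1^{\#},U_2^{\#}$ via order-2 nTD identifiability (Theorem~\ref{thm:nmfmain}), step~2 recovers $U_3^{\#}$ via min-vol NMF identifiability (Theorem~\ref{thm:idenminvol}), and step~3 recovers $\mathcal{G}^{\#}$ by a direct pseudoinverse computation. At each stage the relevant factor is pinned down only up to a permutation, thanks to the normalization constraints $U_i^\top e = e$ built into the procedure.

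For step~1, I would first invoke Property~\ref{lem:changeofbasisslices} to write the maximal-rank slice as $\mathcal{T}^{(3)}_{i_3} = U_1^{\#} S_{i_3}^{\#} U_2^{\# \top}$ with $S_{i_3}^{\#} = \sum_{k}(U_3^{\#})_{i_3,k}\mathcal{G}^{(3)}_k \in \mathbb{R}^{r \times r}$. Because $U_1^{\#},U_2^{\#}$ satisfy the SSC (hence have full column rank $r$) and $\rank(\mathcal{T}^{(3)}_{i_3}) = r$ by Assumption~\ref{ass:order3idea1}, the square core $S_{i_3}^{\#}$ must be invertible. Thus $\mathcal{T}^{(3)}_{i_3}$ is a genuine rank-$r$ order-2 nTD whose outer factors satisfy the SSC and the normalization $(U_i^{\#})^\top e = e$, so Theorem~\ref{thm:nmfmain} applied to~\eqref{eq:idea1tensororder3} forces $U_1^* = U_1^{\#}\Pi_1$ and $U_2^* = U_2^{\#}\Pi_2$ for some permutation matrices $\Pi_1,\Pi_2$.

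Step~2 is where I expect the main difficulty, as it requires checking that projecting with $(U_1^*)^\dagger$ lands us exactly in the hypotheses of Theorem~\ref{thm:idenminvol}. Writing $\mathcal{T}^{(2)}_{i_2} = U_1^{\#} M_{i_2} U_3^{\# \top}$ with $M_{i_2} = \sum_k (U_2^{\#})_{i_2,k}\mathcal{G}^{(2)}_k \in \mathbb{R}^{r \times r_3}$, and using that $U_1^* = U_1^{\#}\Pi_1$ is full column rank so that $(U_1^*)^\dagger = \Pi_1^\top (U_1^{\#})^\dagger$ and $(U_1^{\#})^\dagger U_1^{\#} = I_r$, I would obtain $T^*_{i_2} = (\Pi_1^\top M_{i_2})\, U_3^{\# \top}$. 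The two things to verify carefully are: (i) the left factor $\Pi_1^\top M_{i_2}$ has full column rank $r_3$, which follows from $\rank(\mathcal{T}^{(2)}_{i_2}) = r_3$ together with the full rank of $U_1^{\#}$ and $U_3^{\#}$ (forcing $\rank(M_{i_2}) = r_3$); and (ii) the right factor $U_3^{\#}$ still satisfies the SSC and $(U_3^{\#})^\top e = e$, which is untouched by the projection. These make $T^*_{i_2} = (\Pi_1^\top M_{i_2})\, U_3^{\# \top}$ a rank-$r_3$ min-vol NMF instance, so Theorem~\ref{thm:idenminvol} applied to~\eqref{eq:idea1tensororder3b} yields $U_3^* = U_3^{\#}\Pi_3$ for a permutation $\Pi_3$, the normalization $U_3^\top e = e$ removing the residual scaling exactly as in the proof of Theorem~\ref{thm:nmfmain}.

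Finally, for step~3 I would substitute $U_i^* = U_i^{\#}\Pi_i$ into $\mathcal{G}^* = ((U_1^*)^\dagger,(U_2^*)^\dagger,(U_3^*)^\dagger).\mathcal{T}$. Since each $U_i^{\#}$ has full column rank, $(U_i^*)^\dagger = \Pi_i^\top (U_i^{\#})^\dagger$ and $(U_i^{\#})^\dagger U_i^{\#} = I_{r_i}$; expanding $\mathcal{T} = (U_1^{\#},U_2^{\#},U_3^{\#}).\mathcal{G}^{\#}$ and using the composition rule for multilinear transformations collapses every $(U_i^{\#})^\dagger U_i^{\#}$ to the identity, giving $\mathcal{G}^* = (\Pi_1^\top,\Pi_2^\top,\Pi_3^\top).\mathcal{G}^{\#}$, which is precisely the claimed essential uniqueness. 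The only genuinely delicate bookkeeping is the rank-and-SSC check in step~2, ensuring that the pseudoinverse projection preserves both the column rank of the reduced left factor and the SSC of $U_3^{\#}$, so that min-vol NMF identifiability truly applies.
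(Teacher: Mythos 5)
Your proposal is correct and follows essentially the same route as the paper's proof: step~1 applies Theorem~\ref{thm:nmfmain} to the rank-$r$ slice $\mathcal{T}^{(3)}_{i_3}$, step~2 uses $(U_1^*)^\dagger = \Pi_1^\top (U_1^{\#})^\dagger$ to show the projected slice $T^*_{i_2}$ is a rank-$r_3$ min-vol NMF instance with factor $U_3^{\#}$ satisfying the SSC, and step~3 collapses the pseudoinverses against $\mathcal{T} = (U_1^{\#},U_2^{\#},U_3^{\#}).\mathcal{G}^{\#}$ to recover $\mathcal{G}^{\#}$ up to the permutations. The only cosmetic difference is that you cite Theorem~\ref{thm:idenminvol} for step~2 (which is indeed the natural reference for the min-vol NMF subproblem, and matches the procedure's description) and make the rank check on $M_{i_2}$ slightly more explicit than the paper does.
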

\begin{proof}
Let us first show that the nTD $(U^{\#}_1,U^{\#}_2,U^{\#}_3,\mathcal{G}^{\#})$ of $\mathcal T$ provides a feasible solution to \eqref{eq:idea1tensororder3} and \eqref{eq:idea1tensororder3b}. 
By Assumption~\ref{ass:order3idea1}, the slices $\mathcal T^{(2)}_{i_2}$ and $\mathcal T^{(3)}_{i_3}$ have rank $r$ and $r_3 \leq r$, respectively. 
Using Property~\ref{lem:changeofbasisslices}, we have  
\begin{equation}\label{eq:changeofslices}
        \mathcal T^{(3)}_{i_3} = U^{\#}_1 D^{\#}_{i_3} (U^{\#}_2)^{\top}, 
\end{equation} 
where $S^{\#}_{i_3} = \sum_{k \in [r_3]}(U^{\#}_3)_{i_3,k}\mathcal{G}^{\#}_k$ and $\mathcal{G}^{\#}_k$ is the $k$th slice along the third mode of $\mathcal{G}^{\#}$. 
This shows that $(U^{\#}_1,D^{\#}_{i_3},(U^{\#}_2)^{\top})$ is a feasible solution of (\ref{eq:idea1tensororder3}), the first step of Procedure 1. 
Moreover, using Property~\ref{lem:changeofbasisslices}, it also follows that $\mathcal T^{(2)}_{i_2} = U^{\#}_1D^{\#}_{i_2}(U^{\#}_3)^{\top}$. 
Since $U^{\#}_1$ satisfies the SSC, $\rank(U^{\#}_1) = r$. Hence, $(U_1^{\#})^{\dagger}\mathcal T^{(2)}_{i_2} = D^{\#}_{i_2}(U^{\#}_3)^{\top}$ which shows that $( D^{\#}_{i_2},(U^{\#}_3)^{\top})$ is a feasible solution of (\ref{eq:idea1tensororder3b}), the second step of Procedure 1. 

Let us now show that Procedure 1 recovers an \textit{essentially unique} nTD. 
Let $(U^{*}_1,U^{*}_2,S^{*}_{i_3})$ be an optimal solution of~(\ref{eq:idea1tensororder3}). 
Since $U^{\#}_1$ and $U^{\#}_2$ satisfy the SSC and $\rank(\mathcal T^{(3)}_{i_3}) = r$ by Assumption~\ref{ass:order3idea1},  
Theorem~\ref{thm:nmfmain} applies: there exist permutation matrices $\Pi_1, \Pi_2$ such that $U_1^{*} = U_1^{\#} \Pi_1$, $U_2^{*} = U_2^{\#}\Pi_2$ and $S^{*}_{i_3} = \Pi_1^\top S^{\#}_{i_3} \Pi_2$.  
Let $\mathcal T_{i_2}^{\#} = (U_1^{\#})^{\dagger} \mathcal T^{(2)}_{i_2}$ and $\mathcal T_{i_2}^{*} = (U_1^{*})^{\dagger} \mathcal T^{(2)}_{i_2}$. Since $U_1^{*} = U_1^{\#} \Pi_1$, 
    $\mathcal T_{i_2}^{*} = \Pi_1^\top \mathcal T_{i_2}^{\#}$.
Hence the problems (\ref{eq:idea1tensororder3b}) constructed from $U_1^{\#}$ or from $U_1^{*}$ are the same, up to permutations. 
By Assumption~\ref{ass:order3idea1}, $U^{\#}_3$ satisfies the SSC, while $\rank(\mathcal T_{i_2}^*) = \rank(\mathcal T_{i_2}^{(2)}) = r_3$ because $U_1^{*} = U_1^{\#} \Pi_1$ satisfies the SSC and hence is full rank. Therefore Theorem~\ref{thm:nmfmain} applies to~(\ref{eq:idea1tensororder3b}): $U_3^{*} = U_3^{\#} \Pi_3$ for some permutation matrix $\Pi_3$. 

Moreover, since $U^{\#}_i$ have full column rank, we have that $(U^{\#}_i)^{\dagger}U^{\#}_i = I_{r_i}$ for all $i \in [3]$. Using this, we already have that $\mathcal{G}^{\#} = ((U^{\#}_1)^{\dagger},(U^{\#}_2)^{\dagger},(U^{\#}_3)^{\dagger}).T$. From this, we can conclude that $$\mathcal{G}^* = ((U^*_1)^{\dagger},(U^*_2)^{\dagger},(U^*_3)^{\dagger}).\mathcal{T} =  (\Pi_1^{\top},\Pi_2^\top,\Pi_3^\top).((U^{\#}_1)^{\dagger},(U^{\#}_2)^{\dagger},(U^{\#}_3)^{\dagger}).T = (\Pi_1^{\top},\Pi_2^\top,\Pi_3^\top).\mathcal{G}^{\#}.$$
Finally, since all factors involved have full column rank, $\mathcal{G}^*$ computed by the third step of Procedure~1 gives the desired result: 
$$
\mathcal T = (U^{\#}_1,U^{\#}_2,U^{\#}_3).\mathcal{G}^{\#} = (U^{*}_1\Pi^\top_1,U^{*}_2\Pi^\top_2,U^{*}_3\Pi^\top_3).\Big((\Pi_1, \Pi_2, \Pi_3)\mathcal{G}^*\Big) = (U^{*}_1,U^{*}_2,U^{*}_3).\mathcal{G}^*.
$$ 
\end{proof}




\subsubsection{Approach 2: the spans of slices of the core tensor along two modes have maximum possible rank} \label{sec:randomidea2}

In the previous section, we used a single maximum-rank slice of $\mathcal T$. In this section, we show how to combine slices to obtain a stronger identifiability result. We need the following definition. 
\begin{definition}[Maximal Rank of a Matrix Space]\label{def:nonsingular}
We define a subspace of matrices $\mathcal{V} \subseteq \R_+^{r \times r}$ to have \textit{maximal rank} $r$ if there exists $A \in \mathcal{V}$ such that $\rank(A) = r$.
\end{definition}
For any family of matrices $\mathcal{A} = \{A_1,\dots,A_n\}$, we define their \textit{linear span} over a field $\mathbb{K}$ as $\text{lin-span}_{\mathbb{K}}(\mathcal{A}) := \{\sum_{i=1}^n v_iA_i \ | \ v_i \in \mathbb{K}\}$.

\begin{assumptions}\label{ass:order3idea2} 
The order-$3$ tensor $\mathcal T\in \R^{n_1 \times n_2 \times n_3}$ satisfies Assumption~\ref{ass:generalass}, Assumption~\ref{ass:order3idea1}(1-2) (namely $r_3 \leq r = r_1 = r_2$, and $U^{\#}_i$ satisfies the SSC for $i \in [3]$), and   
\begin{enumerate}

    
    


    
    
    \item The set $\mathcal{V}^{(3)} := \text{lin-span}_{\R_+} 
    \big\{\mathcal G^{\#(3)}_1,\dots,\mathcal G^{\#(3)}_{r_3}\big\}$ has maximal rank $r$ 
    and the set $\mathcal{V}^{(2)} := \text{lin-span}_{\R_+} \big\{\mathcal G^{\#(2)}_1,\dots,\mathcal G^{\#(2)}_{r}\big\}$  has maximal rank $r_3$.
\end{enumerate}
\end{assumptions}

Assumption~\ref{ass:order3idea2} is a relaxation of Assumption~\ref{ass:order3idea1}. 
In fact, Assumption~\ref{ass:order3idea1} requires $\rank(\mathcal T^{(3)}_j) =  r$ for some $j$. 
Using Property~\ref{lem:changeofbasisslices}, 
\[
\mathcal T^{(3)}_j = U_1^\# \underbrace{\Big(\sum_{k \in [r_3]}(U_3^\#)_{j,k}\mathcal 
G^{\#(3)}_k\Big)}_{=: S} U_2^{\# \top} 
\] 
so that $S \in \mathcal{V}^{(3)}$ has rank $r$. Similarly, one can show that $\mathcal{V}^{(2)}$ has maximal rank $r_3$. 

Given an order-$3$ tensor $\mathcal T$ that satisfies Assumption~\ref{ass:order3idea2}, let us consider the following randomized procedure for computing a decomposition: 

\vspace{0.1cm}   
\fbox{%
	\parbox{0.95\linewidth}{%
    \begin{center} \vspace{-0.2cm}   
       \textbf{Procedure 2: Unique order-$3$ nTD under Assumption~\ref{ass:order3idea2}} 
    \end{center}
\begin{enumerate}

    \item Let $\mathcal T_{\alpha} = \sum_{i=1}^{n_3} \alpha_i\mathcal T^{(3)}_i$ and $\mathcal T_{\beta} = \sum_{i=1}^{n_2} \beta_i\mathcal T^{(2)}_i$ be two random linear combinations from a continuous distribution of the slices along the third and second mode of the tensor $\mathcal T$, respectively.

    \item Computation of $U^*_1, U^*_2$: Solve the following min-vol order-2 nTD problem   
\begin{equation}\label{eq:idea2order3}
    \min_{U_1,U_2,S_{\alpha}}  |\det(S_{\alpha})| 
    \text{ such that } 
    \mathcal T_{\alpha} = U_1 S_{\alpha} U_2^\top, 
    U_i^\top e = e \text{ and } U_i \geq 0 \text{ for } i \in [2], 
    \end{equation}
 to obtain an optimal solution $(U^*_1,U^*_2,S^*_{i_3})$. 

 \item Computation of $U^*_3$: For the matrix $T^*_{\beta} = (U^*_1)^{\dagger}\mathcal T_{\beta}$ and solve the following min-vol NMF problem 
    \begin{equation}\label{eq:idea2order3b}
    \min_{U_3, S_{\beta}}  \det(S_{\beta}^\top S_{\beta})   \text{ such that }  
     T^*_{\beta} = S_{\beta}U_3^\top, 
     U_3^\top e = e \text{ and } U_3 \geq 0,  
       \end{equation} 
to obtain an optimal solution $(U_3^*, S^*_{\beta})$.

    
    \item Compute $\mathcal{G}^* = ((U^*_1)^{\dagger},(U^*_2)^{\dagger},(U^*_3)^{\dagger}).T$.
\end{enumerate}
}
}
\vspace{0.1cm}

Theorem~\ref{thm:idea2order3} shows that, under Assumption~\ref{ass:order3idea2}, a decomposition computed by this procedure is essentially unique.

\begin{theorem}\label{thm:idea2order3}  
Let $\mathcal T\in \R^{n_1 \times n_2 \times n_3}$ be an order-$3$ tensor satisfying 
Assumption~\ref{ass:order3idea2}, with the corresponding  $(r,r,r_3)$-nTD given by $(U^{\#}_1,U^{\#}_2,U^{\#}_3).\mathcal{G}^{\#}$. 
Then, with probability~$1$, for any decomposition $(U^{*}_1,U^{*}_2,U^{*}_3).\mathcal{G}^{*}$ obtained from Procedure~2, there exist permutation matrices $\Pi_1,\Pi_2,\Pi_3$ such that $U^{*}_i = U^{\#}_i \Pi_i$ for all $i \in [3]$ and $\mathcal{G}^{*} = (\Pi_1^\top,\Pi_2^\top,\Pi_3^\top).\mathcal{G}^{\#}$.
\end{theorem}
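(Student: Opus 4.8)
The plan is to recognize that Procedure~2 is nothing but Procedure~1 with the single maximum-rank slices $\mathcal T^{(3)}_{i_3}$ and $\mathcal T^{(2)}_{i_2}$ replaced by the random linear combinations $\mathcal T_\alpha$ and $\mathcal T_\beta$. Consequently, the only content beyond Theorem~\ref{thm:idea1order3} is to show that, with probability~$1$, these random combinations inherit the rank conditions of Assumption~\ref{ass:order3idea1}(3), namely $\rank(\mathcal T_\alpha) = r$ and $\rank(\mathcal T_\beta) = r_3$. Everything else then follows by reduction.

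First I would use Property~\ref{lem:changeofbasisslices} to expand the random combinations as
\[
\mathcal T_\alpha = U_1^{\#}\, S_\alpha\, (U_2^{\#})^\top, \qquad S_\alpha := \sum_{k \in [r_3]} \gamma_k\, \mathcal G^{\#(3)}_k, \qquad \gamma := (U_3^{\#})^\top \alpha,
\]
\[
\mathcal T_\beta = U_1^{\#}\, S_\beta\, (U_3^{\#})^\top, \qquad S_\beta := \sum_{k \in [r]} \delta_k\, \mathcal G^{\#(2)}_k, \qquad \delta := (U_2^{\#})^\top \beta.
\]
Thus $S_\alpha$ lies in the real linear span of $\{\mathcal G^{\#(3)}_k\}_{k\in[r_3]}$ (which contains $\mathcal{V}^{(3)}$) and $S_\beta$ in that of $\{\mathcal G^{\#(2)}_k\}_{k\in[r]}$, with coefficient vectors $\gamma$ and $\delta$ obtained by applying the linear maps $(U_3^{\#})^\top$ and $(U_2^{\#})^\top$ to $\alpha$ and $\beta$.

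The heart of the argument is a genericity step. Since $U_3^{\#}$ and $U_2^{\#}$ satisfy the SSC, they have full column rank $r_3$ and $r$, so the maps $\alpha \mapsto \gamma$ and $\beta \mapsto \delta$ are surjective; as the pushforward of a continuous distribution under a surjective linear map remains absolutely continuous, $\gamma$ and $\delta$ are continuously distributed on $\R^{r_3}$ and $\R^{r}$. By Assumption~\ref{ass:order3idea2}, $\mathcal{V}^{(3)}$ contains a rank-$r$ matrix, hence $\det(S_\alpha)$ is a polynomial in $\gamma$ that is not identically zero and therefore vanishes only on a set of Lebesgue measure zero; so $\rank(S_\alpha) = r$ with probability~$1$, and since $U_1^{\#}, U_2^{\#}$ have full column rank $r$, this gives $\rank(\mathcal T_\alpha) = r$ almost surely. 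The same reasoning applied to a nonvanishing $r_3 \times r_3$ minor of $S_\beta$ (using that $\mathcal{V}^{(2)}$ contains a rank-$r_3$ matrix) yields $\rank(S_\beta) = r_3$, whence $\rank(\mathcal T_\beta) = r_3$ almost surely, because $U_1^{\#}$ and $U_3^{\#}$ have full column rank.

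Finally, on the probability-$1$ event that $\rank(\mathcal T_\alpha) = r$ and $\rank(\mathcal T_\beta) = r_3$, the pair $(\mathcal T_\alpha,\mathcal T_\beta)$ satisfies precisely the hypotheses demanded of $(\mathcal T^{(3)}_{i_3},\mathcal T^{(2)}_{i_2})$ in Assumption~\ref{ass:order3idea1}(3). Because Procedure~2 solves the same min-vol order-2 nTD problem~\eqref{eq:idea2order3} and min-vol NMF problem~\eqref{eq:idea2order3b} that Procedure~1 solves in~\eqref{eq:idea1tensororder3}--\eqref{eq:idea1tensororder3b}, only with $\mathcal T_\alpha,\mathcal T_\beta$ in place of the slices, the proof of Theorem~\ref{thm:idea1order3} applies verbatim: Theorem~\ref{thm:nmfmain} recovers $U_1^{*} = U_1^{\#}\Pi_1$ and $U_2^{*} = U_2^{\#}\Pi_2$ from~\eqref{eq:idea2order3}, Theorem~\ref{thm:idenminvol} recovers $U_3^{*} = U_3^{\#}\Pi_3$ from~\eqref{eq:idea2order3b}, and the full-column-rank projection in the last step yields $\mathcal G^{*} = (\Pi_1^\top,\Pi_2^\top,\Pi_3^\top).\mathcal G^{\#}$. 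I expect the main obstacle to be the genericity step -- specifically, making precise that the nonnegative-span maximal-rank hypothesis transfers to the real span and that the induced coefficient vectors $\gamma,\delta$ stay continuously distributed, so that the measure-zero rank-deficient loci are almost surely avoided.
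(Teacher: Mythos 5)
Your proposal is correct and follows essentially the same route as the paper's proof: expand $\mathcal T_\alpha$ and $\mathcal T_\beta$ via Property~\ref{lem:changeofbasisslices}, argue by polynomial non-vanishing (using the maximal-rank hypothesis on $\mathcal{V}^{(3)}$, $\mathcal{V}^{(2)}$ and the full column rank of $U_3^{\#}$, $U_2^{\#}$ from the SSC) that the rank conditions hold with probability~$1$, and then invoke the proof of Theorem~\ref{thm:idea1order3}. The only cosmetic difference is that the paper keeps the determinant as a polynomial in the original variable $\alpha$ and exhibits an explicit point of non-vanishing via the pseudo-inverse of $U_3^{\#}$, whereas you push the distribution forward to the coefficient vector $\gamma = (U_3^{\#})^\top \alpha$; these are equivalent forms of the same genericity argument.
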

\begin{proof}
Let us first show that the nTD $(U^{\#}_1,U^{\#}_2,U^{\#}_3,\mathcal{G}^{\#})$ of $\mathcal T$ provides a feasible solution to \eqref{eq:idea2order3} and \eqref{eq:idea2order3b} with probability~$1$.  
Let $\mathcal T^{(i)}_j$ be the $j$th slice of $\mathcal T$ along mode $i$. From Property~\ref{lem:changeofbasisslices}, we have  
\begin{align*}
    \mathcal T^{(3)}_j = U^{\#}_1 S^{(3)}_j (U^{\#}_2)^{\top}, 
\end{align*}
where $S^{(3)}_j = \sum_{k \in [r_3]}(U^{\#}_3)_{j,k}\mathcal{G}^{\#}_k$ for all $j \in [n_3]$ and $\mathcal{G}^{\#}_k$ is the $k$th slices along the third mode of $\mathcal{G}^{\#}$. Using this, we obtain 
\begin{equation}\label{eq:randomlincombtrifactorization}
    \mathcal T_{\alpha} = U^{\#}_1 S_{\alpha}(U^{\#}_2)^{\top}, 
\end{equation}
where $$
S_{\alpha} = \sum_{j \in [r]} \alpha_j S^{(3)}_j =  \sum_{k \in [r]} \sum_{j \in [n_3]} \alpha_j (U^{\#}_3)_{j,k}\mathcal{G}^{\#}_k = \sum_{k \in [r]} \langle \alpha, (U^{\#}_3)_{\cdot,k}\rangle \mathcal{G}^{\#}_k. 
$$
Define the $n$-variate polynomial $$P(x) = \det\Big(\sum_{k \in [r]} \langle x, (U^{\#}_3)_{\cdot,k}\rangle \mathcal{G}^{\#}_k\Big) =  \det\Big(\sum_{k \in [r]} ((U^{\#}_3))^{\top} x)_k \mathcal{G}^{\#}_k\Big).
$$ 
Let us show that $P(x) \not\equiv 0$. 
Since, $\mathcal{V}^{(3)}$ is a non-singular subspace, there exists a vector $t = (t_1,\dots,t_r)$ such that $\det(\sum_{i \in [r]} t_i\mathcal{G}^{\#}_i) \neq 0$. Moreover, since $U^{\#}_3 \in \R^{n_3 \times r}$ satisfies the SSC, 
it has full column rank. Hence, $(U^{\#}_3)^{\dagger} U^{\#}_3 = I_r$. 
Taking $x_0 = (((U^{\#}_3)^{\dagger})^{\dagger})^{\top}t$, we obtain 
\begin{align*}
    P(x_0)\text{$=$}\det\Big(\sum_{k \in [r]} (((U^{\#}_3)^{\dagger})^{\top} x_0)_k \mathcal{G}^{\#}_k\Big)\text{$=$}\det \Big(\sum_{k \in [r]} ((U^{\#}_3))^{\top}  ((U^{\#}_3)^{\dagger})^{\top}t)_k \mathcal{G}^{\#}_k \Big)\text{$=$}\det \Big(\sum_{k \in [r]} t_k\mathcal{G}^{\#}_k\Big)\text{$\neq$}0.
\end{align*}
Hence, if $\alpha$ is picked at random, $\rank(S_{\alpha}) = r$ with probability~$1$. 
Moreover, since, $U_1,U_2$ satisfy the SSC, $\rank(U^{\#}_1) = \rank(U^{\#}_2) = r$. This gives us that $\rank(\mathcal T_{\alpha}) = r$ with probability~$1$.
Hence, using (\ref{eq:randomlincombtrifactorization}), $(U^{\#}_1,S_{\alpha},(U^{\#}_2)^\top)$ is a feasible solution to~\eqref{eq:idea2order3}.  
\par
Since $\mathcal{V}^{(2)}$ is a matrix subspace with maximal rank $r_3$, a similar argument gives us that $\rank(S_{\beta}) = r_3$ with probability~$1$.  Moreover, using a similar argument as (\ref{eq:randomlincombtrifactorization}), we also get that $\mathcal T_{\beta} =U^{\#}_1 S_{\beta}(U^{\#}_3)^{\top}$ where $S_{\beta} = \sum_{k \in [r]} \langle \beta, (U^{\#}_2)_{\cdot,k} \rangle \mathcal{G}^{\#}_k$. Since, $U^{\#}_1$ and $U^{\#}_3$ satisfy the SSC, $\rank(U^{\#}_1) = r$ and $\rank(U^{\#}_3) = r_3$. We conclude that $(S_{\beta},(U^{\#}_3)^{\top})$ is a solution of~(\ref{eq:idea2order3}).

Now following the same proof as that of Theorem \ref{thm:idea1order3}, it follows that the $(r,r,r_3)$-nTD returned by the Procedure~2 is \textit{essentially unique} with probability~$1$. 
\end{proof}


\subsubsection{Approach 3: One slice along one mode has full rank and the unfolding of the core tensor along that mode has full column rank}\label{sec:Idea3}

In the previous two sections, we have focused on the case where some information related to slices along two modes is available. What if we do not have information related to slices along two modes, but just along one? Without loss of generality, in the rest of this section, we assume that the information is available along the third mode. The same results hold with appropriate modifications for the first and second modes as well.
\begin{assumptions}\label{ass:order3idea3}
The order-$3$ tensor $\mathcal T\in \R^{n_1 \times n_2 \times n_3}$ satisfies Assumption~\ref{ass:generalass} and   
 \begin{itemize}
    \item $\sqrt{r_3} \leq r = r_1 = r_2$. 
    
    \item  $U^{\#}_i$ satisfies the SSC  for all $i \in [3]$. 
    
    \item There exists $i^* \in [n_3]$ such that $\rank\Big(\mathcal T^{(3)}_{i^*}\Big) = r$. 
    
    \item $\rank\Big(\mathcal{G}^{\#}_{(3)}\Big) =~r_3$.

\end{itemize}
\end{assumptions}

As for Procedure~1, we can recover $U_1^{\#}$ and $U_2^{\#}$, up to permutations, using min-vol nTD, since $\mathcal T^{(3)}_{i^*} =  U_1^{\#} S U_2^{\#}$ for some full-rank square matrix $S$. 
To recover $U_3^{\#}$ and $\mathcal G^{\#}$, we solve another min-vol NMF problem for the projection of all the slices $\mathcal T_i^{(3)}$ using $U_1^{\#}$ and $U_2^{\#}$, as described in the procedure below. 
Note that here we use all the slices of $\mathcal T^{(3)}_{i}$, while only two were used in Procedure~1. 

\vspace{0.1cm}   
\fbox{%
	\parbox{0.95\linewidth}{%
    \begin{center} \vspace{-0.2cm}   
       \textbf{Procedure 3: Unique order-$3$ nTD under Assumption~\ref{ass:order3idea3}} 
    \end{center}
\begin{enumerate}

\item Computation of $U_1^*,U_2^*$: Solve the following min-vol order-2 nTD problem  
\begin{equation}\label{eq:idea3tensororder3}
    \min_{U_1, U_2, S_{i^*}}  |\det(S_{i^*})| 
    \text{ such that }  
     \mathcal T_{i^*}^{(3)} = U_1 S_{i^*} U_2^\top,  
      U_i^\top e =e \text{ and } U_i \geq 0 \text{ for } i \in [2] 
\end{equation} 
     to obtain an optimal solution $(U_1^*,U_2^*,S_j^*)$. 

   \item   Computation of $\mathcal{G}_{(3)}^*, U_3^*$: Form the matrix 
     $S^*$ whose $i$th column is given by $\vect((U^*_1)^\dagger \mathcal T_{i}^{(3)} ((U^*_2)^\top)^{\dagger})$ for all $i \in [n_3]$ and solve the min-vol NMF problem 
    \begin{equation}\label{eq:idea3tensororder3b} 
    \min_{\mathcal{G}_{(3)},U_3} \det(\mathcal{G}_{(3)}^\top \mathcal{G}_{(3)}) 
     \text{ such that } S^* = \mathcal{G}_{(3)}U_3^\top,  
    U_3^\top e = e \text{ and }U_3 \geq 0, 
\end{equation}
to obtain an optimal solution $(\mathcal{G}_{(3)}^*, U_3^*)$. 
\vspace{-0.1cm}  
\end{enumerate}
}
}
\vspace{0.1cm} 


Theorem~\ref{thm:idea3order3} shows that, under Assumption~\ref{ass:order3idea3}, a decomposition computed by this procedure is essentially unique with probability~$1$.

\begin{theorem}\label{thm:idea3order3} 
Let $\mathcal T\in \R^{n_1 \times n_2 \times n_3}$ be an order-$3$ tensor satisfying 
Assumption~\ref{ass:order3idea3}, with the corresponding $(r,r,r_3)$-nTD given by  $(U^{\#}_1,U^{\#}_2,U^{\#}_3,\mathcal{G}^{\#})$. 
Then for any other decomposition $(U^{*}_1,U^{*}_2,U^{*}_3).\mathcal{G}^{*}$ obtained with Procedure~3, 
there exist permutation matrices $\Pi_1,\Pi_2,\Pi_3$ such that $U^{*}_i = U^{\#}_i \Pi_i$ for all $i \in [3]$ and $\mathcal{G}^{*} = (\Pi_1^\top,\Pi_2^\top,\Pi_3^\top).\mathcal{G}^{\#}$. 
\end{theorem}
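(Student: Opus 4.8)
The plan is to follow the same two-stage template used in the proofs of Theorems~\ref{thm:idea1order3} and~\ref{thm:idea2order3}: first verify that the groundtruth nTD yields a feasible point for both subproblems~\eqref{eq:idea3tensororder3} and~\eqref{eq:idea3tensororder3b}, and then invoke the identifiability of each subproblem to pin the factors down up to permutation. The genuinely new ingredient compared to Procedure~1 is that Step~2 now uses \emph{all} slices of $\mathcal T$ along the third mode, assembled columnwise into a single matrix $S^*$, and its identifiability rests on min-vol NMF (Theorem~\ref{thm:idenminvol}) rather than on order-2 nTD.

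For Step~1, I would argue exactly as in Theorem~\ref{thm:idea1order3}. By Property~\ref{lem:changeofbasisslices}, $\mathcal T^{(3)}_{i^*} = U^{\#}_1 S^{\#}_{i^*} (U^{\#}_2)^\top$ with $S^{\#}_{i^*} = \sum_{k \in [r_3]} (U^{\#}_3)_{i^*,k}\, \mathcal G^{\#(3)}_k$; since $\rank(\mathcal T^{(3)}_{i^*}) = r$ while $U^{\#}_1, U^{\#}_2$ have rank $r$ (the SSC implies full rank), the matrix $S^{\#}_{i^*}$ is invertible, so $(U^{\#}_1, U^{\#}_2, S^{\#}_{i^*})$ is feasible for~\eqref{eq:idea3tensororder3}. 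Theorem~\ref{thm:nmfmain} then supplies permutation matrices $\Pi_1, \Pi_2$ with $U^*_1 = U^{\#}_1 \Pi_1$, $U^*_2 = U^{\#}_2 \Pi_2$ and $S^*_{i^*} = \Pi_1^\top S^{\#}_{i^*} \Pi_2$.

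The crux, and where I expect the main bookkeeping to lie, is Step~2. First I would determine the structure of the matrix $S^*$ built in the procedure. Using $U^*_i = U^{\#}_i \Pi_i$, the permutation-covariance of the Moore--Penrose inverse (namely $(U^{\#}_i \Pi_i)^\dagger = \Pi_i^\top (U^{\#}_i)^\dagger$), and the full-rank identities $(U^{\#}_1)^\dagger U^{\#}_1 = I_r$ and $(U^{\#}_2)^\top \big((U^{\#}_2)^\top\big)^\dagger = I_r$, the $i$th column of $S^*$ equals $\vect\big(\Pi_1^\top S^{\#}_i \Pi_2\big) = (\Pi_2^\top \otimes \Pi_1^\top)\vect(S^{\#}_i)$, where $S^{\#}_i = \sum_k (U^{\#}_3)_{i,k}\,\mathcal G^{\#(3)}_k$. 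Since $\vect(S^{\#}_i) = \mathcal G^{\#}_{(3)}\,\big((U^{\#}_3)_{i,:}\big)^\top$, collecting all $i$ yields the clean factorization
\[
S^* = (\Pi_2^\top \otimes \Pi_1^\top)\, \mathcal G^{\#}_{(3)}\, (U^{\#}_3)^\top .
\]
I would then check that this is an identifiable instance of min-vol NMF: the factor $U^{\#}_3$ plays the role of $H$ and satisfies both the SSC and $(U^{\#}_3)^\top e = e$, while the other factor $(\Pi_2^\top \otimes \Pi_1^\top)\mathcal G^{\#}_{(3)}$ need not be nonnegative, which is permitted by Remark~\ref{rem:nonneg}. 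The rank identity $\rank(S^*) = r_3$ follows from $\rank(\mathcal G^{\#}_{(3)}) = r_3$ and the full column rank of $U^{\#}_3$, since the Kronecker factor $\Pi_2^\top \otimes \Pi_1^\top$ is invertible; note that the consistency of $\rank(\mathcal G^{\#}_{(3)}) = r_3$ with $\mathcal G^{\#}_{(3)} \in \R^{r^2 \times r_3}$ is exactly why the hypothesis $\sqrt{r_3} \le r$ is imposed. Theorem~\ref{thm:idenminvol}, with the normalization $U_3^\top e = e$ forcing the diagonal scaling to be trivial, then yields a permutation $\Pi_3$ with $U^*_3 = U^{\#}_3 \Pi_3$ and $\mathcal G^*_{(3)} = (\Pi_2^\top \otimes \Pi_1^\top)\mathcal G^{\#}_{(3)} \Pi_3$.

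Finally, I would recover the core exactly as in Theorem~\ref{thm:idea1order3}. Since every $U^{\#}_i$ has full column rank, $\mathcal G^{\#} = ((U^{\#}_1)^\dagger,(U^{\#}_2)^\dagger,(U^{\#}_3)^\dagger).\mathcal T$; substituting $U^*_i = U^{\#}_i \Pi_i$ into the Step~3 formula $\mathcal G^* = ((U^*_1)^\dagger,(U^*_2)^\dagger,(U^*_3)^\dagger).\mathcal T$ and applying $(U^{\#}_i\Pi_i)^\dagger = \Pi_i^\top (U^{\#}_i)^\dagger$ gives $\mathcal G^* = (\Pi_1^\top,\Pi_2^\top,\Pi_3^\top).\mathcal G^{\#}$, the desired conclusion. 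The single delicate point of the whole argument is the displayed identity for $S^*$: tracking how $\Pi_1$ and $\Pi_2$ propagate through the $\vect$ and Kronecker operations, and then confirming that the resulting instance genuinely meets the hypotheses of Theorem~\ref{thm:idenminvol} (in particular that the recovered second factor is $U^{\#}_3$ itself, so that the SSC is available).
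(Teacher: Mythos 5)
Your proposal is correct and follows essentially the same route as the paper's proof: feasibility of the groundtruth for both \eqref{eq:idea3tensororder3} and \eqref{eq:idea3tensororder3b}, Theorem~\ref{thm:nmfmain} to pin down $U_1^*,U_2^*$ up to permutations, the factorization $S^* = (\Pi_2^\top \otimes \Pi_1^\top)\,\mathcal{G}^{\#}_{(3)}(U^{\#}_3)^\top$ (the paper writes this as $(\Pi_1\otimes\Pi_2)^\top S^{\#}$ under its own Kronecker ordering; both are right in their respective conventions), and Theorem~\ref{thm:idenminvol} for the second stage. The only blemish is your closing appeal to a ``Step~3 formula'' $\mathcal{G}^* = ((U^*_1)^\dagger,(U^*_2)^\dagger,(U^*_3)^\dagger).\mathcal{T}$, which is Procedure~1's step, not Procedure~3's --- there the core is obtained by folding $\mathcal{G}^*_{(3)}$ --- but since you already derived $\mathcal{G}^*_{(3)} = (\Pi_2^\top\otimes\Pi_1^\top)\mathcal{G}^{\#}_{(3)}\Pi_3$, folding it gives $\mathcal{G}^* = (\Pi_1^\top,\Pi_2^\top,\Pi_3^\top).\mathcal{G}^{\#}$ directly and the conclusion is unaffected.
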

\begin{proof}
Let us first show that the existence of an nTD $(U^{\#}_1,U^{\#}_2,U^{\#}_3).\mathcal{G}^{\#}$ of the input tensor $\mathcal T$ that satisfies Assumptions~\ref{ass:order3idea3} also proves the existence of a solution to the optimization problems in  (\ref{eq:idea3tensororder3}) and (\ref{eq:idea3tensororder3b}). 
Using Property~\ref{lem:changeofbasisslices}, the slices along the third mode have the form 
$$\mathcal T^{(3)}_i = U^{\#}_1S^{\#}_i(U^{\#}_2)^{\top}, \quad 
\text{ where } S^{\#}_{i} = \sum_{k \in [r_3]}(U^{\#}_3)_{i,k}\mathcal{G}^{(3)}_k. 
$$ 
where $\mathcal{G}^{(3)}_k$ is the $k$th slice of $\mathcal{G}^{\#}$ along the third mode. 
 Hence $(U^{\#}_1,S^{\#}_{i^*},U^{\#}_2)$ is a feasible solution to (\ref{eq:idea3tensororder3}). 
Let $S^{\#}$ be the $r_1r_2 \times n_3$ matrix such that the $i$th column of $S^{\#}$ is given by 
$$
\vect((U^{\#}_1)^\dagger \mathcal T_{i}^{(3)} ((U^{\#}_2)^\top)^{\dagger}) =  \vect(S^{\#}_{i}) \quad \text{ for all } i \in [n_3]. 
$$ 
Then for all $j \in [r_1]$ and  $k \in [r_2]$, 
\begin{equation}\label{eq:lincombslicestounfolding}
    (S^{\#})_{jk,i} = (S_i^{\#})_{j,k} = \sum_{m \in [r_3]}(U^{\#}_3)_{i,m}(\mathcal{G}^{(3)}_m)_{j,k} =  \sum_{m \in [r_3]}(\mathcal{G}^{\#}_{(3)})_{jk,m}(U^{\#}_3)^{\top}_{m,i} = (\mathcal{G}^{\#}_{(3)}.(U^{\#}_3)^\top)_{jk,i}. 
\end{equation}

Let us now show that the solution returned by the Procedure~3 is essentially unique. 
Let $(U^{*}_1,U^{*}_2,S_{i^*}^{*})$ be an optimal solution to (\ref{eq:idea3tensororder3}). 
The assumptions of Theorem~\ref{thm:nmfmain} are satisfied, by Assumption~\ref{ass:order3idea3}, hence there exist permutation matrices $\Pi_1,\Pi_2$ such that $ U_1^* = U^{\#}_1  \Pi_1$, $ U_2^* = U^{\#}_2 \Pi_2 $ and $S_{i^*}^{*} = \Pi_1^\top S_{i^*}^{\#} \Pi_2$.

Let $S^{\#}$ be the matrix with $i$th column given by $\vect((U^{\#}_1)^\dagger \mathcal T_{i}^{(3)} ((U^{\#}_2)^\top)^{\dagger})$ and $S^{*}$ be the matrix with $j$th column given by $\vect((U^{*}_1)^\dagger \mathcal T_{j}^{(3)} ((U^{*}_2)^\top)^{\dagger})$ for all $j \in [n_3]$.  We have the following relation between their $i$th columns
\begin{equation}\label{eq:kronpermutation}
\begin{split}
    \vect(S^{*}_i) 
    = \vect((U^{*}_1)^{\dagger} \mathcal T^{(3)}_{i} ((U^{*}_2)^\top)^{\dagger})
    & = \vect(\Pi_1^\top (U^{\#}_1)^\dagger \mathcal T^{(3)}_{i} ((U^{\#}_2)^\top)^\dagger \Pi_2) \\  
    & = \vect(\Pi_1^\top S^{\#}_i \Pi_2) 
    = (\Pi_1 \otimes \Pi_2)^\top \vect(S^{\#}_i). 
\end{split}
\end{equation}
In matrix form, this gives $S^{*} = (\Pi_1 \otimes \Pi_2)^\top S^{\#}$. 
This shows that $S^*$ is equal to $S^{\#}$, up to a permutation of the rows. 
Finally, using Theorem~\ref{thm:idenminvol} on~\eqref{eq:idea3tensororder3b}: there exists a permutation matrix $\Pi_3$ such that 
$\mathcal{G}^{*}_{(3)}=(\Pi_1^\top\otimes\Pi_2^\top)\mathcal{G}^{\#}_{(3)} \Pi_3$ 
and 
$U^{*}_3 = U^{\#}_3 \Pi_3$, 
which gives us the desired result. 

\end{proof}



\subsubsection{Approach 4: The span of slices along one mode has maximum rank and the unfolding of the core tensor along that mode has full column rank}

Similarly as done between Approaches 1 and 2, we can relax the maximum-rank assumption on one slice of $\mathcal T$ along the third mode to their span.  
\begin{assumptions}\label{ass:order3idea4}
The order-$3$ tensor $\mathcal T\in \R^{n_1 \times n_2 \times n_3}$ satisfies Assumption~\ref{ass:generalass}, 
Assumption~\ref{ass:order3idea3}(1,2,4) (namely $\sqrt{r_3} \leq r = r_1 = r_2$, $U^{\#}_i$ satisfies the SSC  for $i \in [3]$, $\rank\Big(\mathcal{G}^{\#}_{(3)}\Big) =~r_3$), 
and   
   $\mathcal{V}^{(3)} = \text{lin-span}_{\R_+}\{\mathcal{G}_1^{\# (3)},\dots,\mathcal{G}_r^{\# (3)}\}$ has maximal rank $r$.  
\end{assumptions}
Given an order-$3$ tensor $\mathcal T$ that satisfies Assumption~\ref{ass:order3idea4}, let us consider the following procedure for computing a decomposition: 
\vspace{0.1cm}  

\fbox{%
	\parbox{0.95\linewidth}{%
    \begin{center} \vspace{-0.2cm}   
       \textbf{Procedure 4: Unique order-$3$ nTD under Assumption~\ref{ass:order3idea4}} 
    \end{center}
\begin{enumerate}

\item Pick $\alpha^{(i)} = \alpha^{(i)}_1,\dots,\alpha^{(i)}_{n_3}$ independently at random from a continuous distribution for all $i \in [n_3]$ and define the matrix $A \in \R^{n_3 \times n_3}$ whose columns are give by the $\alpha^{(i)}$'s, that is, $A_{\dot,i} = \alpha^{(i)}$ for $i \in [n_3]$. 
Let $\mathcal T_{\alpha^{(i)}} = \sum_{j=1}^{n_3} \alpha^{(i)}_j\mathcal T^{(3)}_j$ be $n_3$ random linear combination of the slices along the third mode of the tensor $\mathcal T$. 

\item Computation of $U_1^*,U_2^*$: Solve the following min-vol order-2 nTD problem   
\begin{equation}\label{eq:idea4tensororder3}
 \min_{U_1,U_2,S_1} |\det(S_1)|  \text{ such that } 
    \mathcal T_{\alpha^{(1)}} = U_1S_1 U_2^\top \text{, } 
    U_i^\top e = e \text{ and } U_i \geq 0 \text{ for } i \in [2],  
    \end{equation} 
     to obtain an optimal solution $(U_1^*,U_2^*,S^*_{\alpha})$.

    \item Computation of $\mathcal{G}_{(3)}^*,U_3^*$:  Let $S^*$ be the matrix with $i$th column given by $\vect((U^*_1)^\dagger \mathcal T_{\alpha^{(i)}} ((U^*_2)^\top)^{\dagger})$, and solve the min-vol NMF problem 
    \begin{equation}\label{eq:idea4tensororder3b}
    \min_{\mathcal{G}_{(3)},U_3}  \det(\mathcal{G}_{(3)}^\top \mathcal{G}_{(3)}) \text{ such that } S' = S^* A^{-1} = \mathcal{G}_{(3)}U_3^\top \text{ and } U_3^\top e = e, U_3 \geq 0. 
\end{equation}  
to obtain an optimal solution $(\mathcal{G}_{(3)}^*,U_3^*)$. 
\vspace{-0.1cm}  
\end{enumerate}
}
}
\vspace{0.1cm}


Theorem~\ref{thm:idea4order3} shows show that, under Assumption~\ref{ass:order3idea4}, a decomposition computed by this procedure is essentially unique,  with probability~$1$.  

\begin{theorem} \label{thm:idea4order3} 
Let $\mathcal T\in \R^{n_1 \times n_2 \times n_3}$ be an order-$3$ tensor satisfying 
Assumption~\ref{ass:order3idea4}, with the corresponding $(r,r,r_3)$-nTD given by  $(U^{\#}_1,U^{\#}_2,U^{\#}_3,\mathcal{G}^{\#})$. 
Then for any other decomposition $(U^{*}_1,U^{*}_2,U^{*}_3).\mathcal{G}^{*}$ obtained with Procedure~4, 
there exist permutation matrices $\Pi_1,\Pi_2,\Pi_3$ such that $U^{\#}_i = U^*_i \Pi_i$ for all $i \in [3]$ and $\mathcal{G}^{\#} = (\Pi_1^\top,\Pi_2^\top,\Pi_3^\top).\mathcal{G}^*$, with probability~$1$.  
\end{theorem}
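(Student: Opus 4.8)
The plan is to reduce Procedure~4 to Procedure~3 (Theorem~\ref{thm:idea3order3}) by showing that the random mixing can be inverted with probability~$1$, exactly as Theorem~\ref{thm:idea2order3} reduced to Theorem~\ref{thm:idea1order3}. First I would establish feasibility and the required rank conditions almost surely. By Property~\ref{lem:changeofbasisslices}, each random combination satisfies $\mathcal T_{\alpha^{(i)}} = U_1^{\#} S_{\alpha^{(i)}}^{\#} (U_2^{\#})^\top$ with $S_{\alpha^{(i)}}^{\#} = \sum_{k\in[r_3]} \langle \alpha^{(i)}, (U_3^{\#})_{\cdot,k}\rangle \mathcal G_k^{\#(3)}$. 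Since $\mathcal V^{(3)}$ has maximal rank $r$ and $U_3^{\#}$ has full column rank (SSC), the polynomial $\alpha \mapsto \det(S_\alpha^{\#})$ is not identically zero by the same argument as in Theorem~\ref{thm:idea2order3}; hence $\rank(\mathcal T_{\alpha^{(1)}}) = r$ with probability~$1$. Because the $\alpha^{(i)}$ are drawn independently from a continuous distribution, the matrix $A$ is invertible with probability~$1$.

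Given this, step~2 applies Theorem~\ref{thm:nmfmain} to $\mathcal T_{\alpha^{(1)}} = U_1^{\#} S_{\alpha^{(1)}}^{\#} (U_2^{\#})^\top$, producing permutations with $U_1^* = U_1^{\#}\Pi_1$ and $U_2^* = U_2^{\#}\Pi_2$. The heart of the argument is the algebraic identity that makes the $A^{-1}$ correction work. Let $S_\alpha^{\#}$ be the matrix whose $i$th column is $\vect(S_{\alpha^{(i)}}^{\#})$. Expanding $S_{\alpha^{(i)}}^{\#}$ in the slices $\mathcal G_k^{\#(3)}$ and recognizing $\vect(\mathcal G_k^{\#(3)})$ as the $k$th column of $\mathcal G_{(3)}^{\#}$, I would show $S_\alpha^{\#} = \mathcal G_{(3)}^{\#}(U_3^{\#})^\top A = S^{\#}A$, where $S^{\#} = \mathcal G_{(3)}^{\#}(U_3^{\#})^\top$ is precisely the matrix that Procedure~3 forms directly. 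As in~\eqref{eq:kronpermutation}, projecting by $U_1^*,U_2^*$ contributes the factor $(\Pi_1\otimes\Pi_2)^\top$ to each vectorized column, so the matrix $S^*$ built in step~3 satisfies $S^* = (\Pi_1\otimes\Pi_2)^\top S_\alpha^{\#}$. Right-multiplying by $A^{-1}$ then yields $S' = S^*A^{-1} = (\Pi_1\otimes\Pi_2)^\top S^{\#} = (\Pi_1\otimes\Pi_2)^\top \mathcal G_{(3)}^{\#}(U_3^{\#})^\top$, which is exactly the matrix appearing in the proof of Theorem~\ref{thm:idea3order3}.

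From here the argument coincides with that of Theorem~\ref{thm:idea3order3}. Since $\rank(\mathcal G_{(3)}^{\#}) = r_3$ by Assumption~\ref{ass:order3idea4} and $U_3^{\#}$ has full column rank $r_3$, the matrix $S'$ has rank $r_3$, so the min-vol NMF~\eqref{eq:idea4tensororder3b} meets the hypotheses of Theorem~\ref{thm:idenminvol}; the objective $\det(\mathcal G_{(3)}^\top \mathcal G_{(3)})$ is the correct surrogate for volume because $\sqrt{r_3}\le r$ forces $\mathcal G_{(3)}$ to be tall. This produces a permutation $\Pi_3$ with $U_3^* = U_3^{\#}\Pi_3$ and $\mathcal G_{(3)}^* = (\Pi_1\otimes\Pi_2)^\top \mathcal G_{(3)}^{\#}\Pi_3$; folding back gives $\mathcal G^* = (\Pi_1^\top,\Pi_2^\top,\Pi_3^\top).\mathcal G^{\#}$, and the full column rank of all factors yields the claimed essential uniqueness with probability~$1$.

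The main obstacle I anticipate is verifying the identity $S_\alpha^{\#} = S^{\#}A$ cleanly, together with the vectorization bookkeeping that makes the permutation factor exactly $(\Pi_1\otimes\Pi_2)^\top$ so that right-multiplication by $A^{-1}$ commutes past it and reconstructs the Procedure~3 matrix. Everything else is a matter of transporting the probability-$1$ rank arguments of Theorem~\ref{thm:idea2order3} and then quoting the deterministic chain of Theorem~\ref{thm:idea3order3} verbatim.
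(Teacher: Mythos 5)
Your proposal is correct and follows essentially the same route as the paper's proof: establish feasibility and the rank conditions with probability~$1$ via the polynomial non-vanishing argument borrowed from Theorem~\ref{thm:idea2order3}, derive the key identity $S^{\#}_{\alpha} = \mathcal{G}^{\#}_{(3)} (U^{\#}_3)^{\top} A$ so that the invertibility of $A$ (with probability~$1$) undoes the random mixing, and then run the deterministic uniqueness chain of Theorem~\ref{thm:idea3order3}. If anything, you make explicit the permutation bookkeeping $S^* = (\Pi_1 \otimes \Pi_2)^\top S^{\#} A$ and the final min-vol NMF step, which the paper compresses into ``the remainder of the proof is similar to the proof of Theorem~\ref{thm:idea3order3}.''
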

\begin{proof}
Let us first show that the existence of an nTD $(U^{\#}_1,U^{\#}_2,U^{\#}_3).\mathcal{G}^{\#}$ of $\mathcal T$ that satisfies Assumptions \ref{ass:order3idea4} provides a feasible solution to ~\eqref{eq:idea4tensororder3} and \eqref{eq:idea4tensororder3b} with probability $1$. 
As done in (\ref{eq:randomlincombtrifactorization}), we have
\begin{equation}\label{eq:talphai}
    \mathcal T_{\alpha^{(i)}} = U^{\#}_1 S^{\#}_{i} (U^{\#}_2)^\top, \; \; \text{ where } S^{\#}_{i} = \sum_{k \in [r]}  \langle \alpha^{(i)}, (U^{\#}_3)_{\cdot, k} \rangle \mathcal{G}^{(3)}_k \text{ for all } i \in [n_3]. 
\end{equation}
Since $\mathcal{V}^{(3)}$ has maximal rank $r$, 
following the proof of Theorem~\ref{thm:idea2order3}, $\rank(\mathcal T_{\alpha_1}) = r$ with probability 1.


Since $S^{\#}$ is the matrix with its $i$-th column defined as $\vect((U^{\#}_1)^{\dagger}\mathcal T_{\alpha^{(i)}}((U^{\#}_2)^\top)^{\dagger})$, using \eqref{eq:talphai}, we get that the $i$-th column of $S^{\#}$ is given by $\vect(S_i^{\#})_{p,q} =  \sum_{k \in [r]}((U^{\#}_3)^{\top} \alpha_t)_k(\vect(\mathcal G^{(3)}_k))_{p,q}$. Rewriting this in matrix form gives us $vec(S^{\#}_i) =  \mathcal{G}^{\#}_{(3)} (U^{\#}_3)^{\top} \alpha^{(i)}$ where $\mathcal{G}^{\#}_{(3)}$ is the unfolding of the core tensor $\mathcal{G}^{\#}$ along the $3$-rd mode. Finally, this gives us that
\begin{equation}
    S^{\#} = \mathcal G^{\#}_{(3)} (U^{\#}_3)^{\top} A. 
\end{equation}
Since $A$ is invertible with probability~$1$ and $\rank(\mathcal G^{\#}_{(3)}) = r_3$, $(\mathcal G^{\#}_{(3)},(U^{\#}_3)^\top)$ is a solution for the second step of the optimization problem in (\ref{eq:idea4tensororder3}).

The remainder of proof that shows that the solution returned by Procedure~4 is \textit{essentially unique} is similar to the proof of Theorem~\ref{thm:idea3order3}. 
\end{proof}


\section{Higher-order nTDs} \label{app:orderd}

In this section, we generalize the results for order-$3$ nTDs of the previous section to any order. 
The reasoning is the same but the notation gets slightly heavier. 

As for order-$3$ nTDs, we will assume the tensor follows the following assumption. 
\begin{assumptions}\label{ass:generalassorderd}
The order-$d$ tensor $\mathcal T\in \R^{n_1 \times \cdots  \times n_d}$ satisfies the following conditions: 
\begin{enumerate}
    \item $\mathcal T$ has an nTD of the form $\mathcal T = (U^{\#}_1,\dots,U^{\#}_d).\mathcal{G}^{\#}$ 
    where $U^{\#}_i \in \R_+^{n_i \times r_i}$ for all $i \in [d]$ and $\mathcal{G}^{\#} \in \R^{r_1 \times \dots \times r_d}$, and no columns of the $U^{\#}_i$'s are zero. 
    
    \item We assume w.l.o.g.\ that $(U^{\#}_i)^\top e = e$  for $i \in [d]$. 
    
\end{enumerate}
\end{assumptions}

\subsection{Using Unfoldings}\label{app:unfoldorderd}

Let $\mathcal{I} = \{j_1,\dots,j_k\} \subset [d]$. 
For any tuple $(i_1,\dots,i_d)$, we denote  $i_{\mathcal{I}} = (i_{j_1},\dots,i_{j_k})$, and $i_{\mathcal{I}^{-1}}$ the tuple with the rest of the $(d-k)$ elements, that is, $\mathcal{I}^{-1} = [d] \backslash \mathcal{I}$. 
For an order-$d$ tensor $\mathcal T$, we define the unfolding along the mode $\mathcal{I}$, denoted by $\mathcal{T}_{\mathcal{I}}$, as the $\prod_{j \in \mathcal{I}^{-1}} n_j \times \prod_{j \in \mathcal{I}} n_j$ matrix with its $( i_{\mathcal{I}^{-1}},i_{\mathcal{I}})$ entry given by $\mathcal{T}_{i_1,\dots,i_d}$. 
Similarly to Property~\ref{lem:changeofbasisunfoldings}, we have the following property for any unfolding. 
\begin{property}[Unfoldings] \label{lem:12flatenningchangeofbasisorderd}
Let $\mathcal T= (U_1,\dots,U_d).\mathcal{G}$ where $U_i \in \R^{n_i \times r_i}$ for all $i \in [d]$ and $\mathcal{G} \in \R^{r_1 \times  \times r_d}$. 
The unfolding along mode $\mathcal I$ of $\mathcal T$ can be factorized as 
\begin{align*}
    \mathcal T_{\mathcal{I}} \; =  \;  (\otimes_{j \in \mathcal{I}^{-}}  U_j) \, \mathcal{G}_{\mathcal{I}} \, (\otimes_{j \in \mathcal{I}} U_j)^\top .
\end{align*}
\end{property}

Assuming $\otimes_{j \in \mathcal{I}^{-}}  U_j$ and $\otimes_{j \in \mathcal{I}} U_j$ satisfy the SSC, we will be able to recover them using min-vol order-2 nTD (Theorem~\ref{thm:nmfmain}), while we will be able to recover the individual $U_i$'s by using Kronecker factorizations of these matrices. 

\begin{assumptions}\label{ass:orderdunfoldSSC} 
The tensor $\mathcal{T} = (U^{\#}_1,\dots,U^{\#}_d).\mathcal{G}$ satisfies  Assumption~\ref{ass:generalassorderd} 
 and there exists a non-empty set $\mathcal{I} \subsetneq  [d]$   
such that the following conditions are satisfied 
    \begin{itemize}
        \item $\otimes_{i \in S} U^{\#}_i $ satisfy the SSC for $S \in \{\mathcal{I},\mathcal{I}^{-}\}$. 
        
        \item $\rank(\mathcal{G}_{\mathcal{I}})  = \prod_{i \in \mathcal{I}} r_i = \prod_{j \in \mathcal{I}^{-}} r_j$.
        
    \end{itemize}
\end{assumptions}

Hence adapting Procedure~0 for order-$3$ nTDs, we propose the following procedure to compute an identifiable order-$d$ nTD under Assumption~\ref{ass:orderdunfoldSSC}. We call it Procedure~$d$.0 as it generalizes Procedure~0 for order-$d$ tensors. 

\vspace{0.1cm}

\fbox{%
	\parbox{0.95\linewidth}{%
    \begin{center} \vspace{-0.2cm}   
       \textbf{Procedure $d$.0: Unique order-$d$ nTD under Assumption~\ref{ass:orderdunfoldSSC}} 
    \end{center}
\begin{enumerate}
    \item Solve the following min-vol order-2 nTD problem                      
    \begin{align}\label{eq:idea0tensororderd}
    \min_{U_{\mathcal{I}},U_{\mathcal{I}^{-1}},G}  |\det(G)| \text{ such that }  & 
     \mathcal T_{\mathcal{I}} = U_{\mathcal{I}^{-}}G U_{\mathcal{I}}^{\top},  \\ 
     & 
      U_{\mathcal{I}}^\top e = e, U_{\mathcal{I}^{-}}^\top e = e, \text{ and } ( U_{\mathcal{I}},U_{\mathcal{I}^{-}}) \geq 0, \nonumber 
    \end{align}
    to obtain an optimal solution $ (U^*_{\mathcal{I}}, U^*_{\mathcal{I}^{-1}},G^*)$. 
    
    \item  Computation of $(U^*_1,\dots,U_d^*)$: Find permutations $\Pi^*_{\mathcal{I}},\Pi^*_{\mathcal{I}^{-}}$ such that $U^*_{S}\Pi^*_{S}$ admits a decomposition $U^*_{S}\Pi^*_{S} =  \otimes_{j \in S} U_j^*$  such that $U_j^{* \top} e = e$ for all $j \in S$
    where $S \in \{\mathcal{I},\mathcal{I}^{-}\}$; see 
   Corollary~\ref{corr:kronpdtpermd} and Theorem~\ref{thm:uniquenesskronpdt}. 

    \item Computation of $\mathcal{G}^*: $ Compute the matrix $\mathcal{G}_{\mathcal{I}}^* = (\Pi_{\mathcal{I}^{-}}^*)^\top G^* \Pi^*_{\mathcal{I}}$ and fold it to form the order-$d$ tensor $\mathcal{G}^*$.
    
     

    \vspace{-0.1cm}  
\end{enumerate}
	}%
}

\vspace{0.1cm}

Following the proof of Theorem~\ref{thm:order2unfold} for order-$3$ nTD, one can show that if the given tensor satisfies 
Assumption~\ref{ass:orderdunfoldSSC}, then Procedure $d$.0 returns an \textit{essentially} unique nTD.

\begin{theorem} \label{thm:unfoldsorderd} 
Let $\mathcal T\in \R^{n_1 \times \dots \times n_d} = (U^{\#}_1,\dots,U^{\#}_d).\mathcal{G}^{\#}$ satisfy Assumption~\ref{ass:orderdunfoldSSC}. 
Then for any other decomposition $(U^*_1,\dots,U^*_d).\mathcal{G}^*$ of $\mathcal T$  obtained by 
Procedure~$d$.0, there exist permutation matrices $\Pi_i$ for all $i \in [d]$ such that $U_i^* = U_i^{\#} \Pi_i$ for all $i \in [d]$ and $\mathcal{G}^* = (\Pi_1^\top,\dots,\Pi_d^\top)\mathcal{G}^{\#}$. 
\end{theorem}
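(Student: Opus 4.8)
The plan is to mirror the proof of Theorem~\ref{thm:order2unfold} for order-$3$ nTD, with the index block $\mathcal{I}$ playing the role of the pair $\{1,2\}$ and $\mathcal{I}^-$ the role of the single mode $\{3\}$. First I would invoke Property~\ref{lem:12flatenningchangeofbasisorderd} to write the ground-truth unfolding as
\[
\mathcal T_{\mathcal{I}} \; = \; \big(\otimes_{j \in \mathcal{I}^{-}} U_j^{\#}\big) \, \mathcal{G}_{\mathcal{I}}^{\#} \, \big(\otimes_{j \in \mathcal{I}} U_j^{\#}\big)^{\top},
\]
which exhibits $(\otimes_{j \in \mathcal{I}^{-}} U_j^{\#}, \mathcal{G}_{\mathcal{I}}^{\#}, \otimes_{j \in \mathcal{I}} U_j^{\#})$ as a feasible point of~\eqref{eq:idea0tensororderd}. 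By Assumption~\ref{ass:orderdunfoldSSC}, both Kronecker products satisfy the SSC and $\mathcal{G}_{\mathcal{I}}^{\#}$ is a square matrix of full rank $\prod_{i \in \mathcal{I}} r_i = \prod_{j \in \mathcal{I}^-} r_j$, so $\rank(\mathcal T_{\mathcal{I}}) = \prod_{i \in \mathcal{I}} r_i$. Hence Theorem~\ref{thm:nmfmain}, equivalently Corollary~\ref{corr:suboptimalsoltn}, applies to the min-vol order-2 nTD solved in step~1, yielding permutation matrices $\Pi_{\mathcal{I}}, \Pi_{\mathcal{I}^-}$ with
\[
U_{\mathcal{I}^-}^* = \big(\otimes_{j \in \mathcal{I}^{-}} U_j^{\#}\big) \Pi_{\mathcal{I}^-}, \quad U_{\mathcal{I}}^* = \big(\otimes_{j \in \mathcal{I}} U_j^{\#}\big) \Pi_{\mathcal{I}}, \quad G^* = \Pi_{\mathcal{I}^-}^{\top} \mathcal{G}_{\mathcal{I}}^{\#} \Pi_{\mathcal{I}}.
\]

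Next I would handle the Kronecker disaggregation of step~2. For each $S \in \{\mathcal{I}, \mathcal{I}^-\}$, the procedure finds a permutation $\Pi_S^*$ with $U_S^* \Pi_S^* = \otimes_{j \in S} U_j^*$; substituting the expressions above gives $\otimes_{j \in S} U_j^* = (\otimes_{j \in S} U_j^{\#}) \Pi_S \Pi_S^*$. At this point I would invoke the order-$d$ Kronecker permutation result, Corollary~\ref{corr:kronpdtpermd} (itself built on the uniqueness Theorem~\ref{thm:uniquenesskronpdt}), to conclude that there exist permutation matrices $\Pi_j$ for each $j \in S$ such that $U_j^* = U_j^{\#} \Pi_j$ and, crucially, $\Pi_S \Pi_S^* = \otimes_{j \in S} \Pi_j$. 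Running this for both $S = \mathcal{I}$ and $S = \mathcal{I}^-$ produces a permutation $\Pi_i$ for every mode $i \in [d]$ with $U_i^* = U_i^{\#} \Pi_i$, which is the first half of the claim.

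It then remains to track the core. Using $\mathcal{G}_{\mathcal{I}}^* = (\Pi_{\mathcal{I}^-}^*)^\top G^* \Pi_{\mathcal{I}}^*$ from step~3 together with the factorization of $G^*$ above and the identities $\Pi_S \Pi_S^* = \otimes_{j \in S}\Pi_j$, I would compute
\[
\mathcal{G}_{\mathcal{I}}^* = \big(\otimes_{j \in \mathcal{I}^-} \Pi_j\big)^{\top} \mathcal{G}_{\mathcal{I}}^{\#} \big(\otimes_{j \in \mathcal{I}} \Pi_j\big),
\]
and finally fold this matrix identity back into tensor form. The main obstacle I anticipate is precisely this last folding step: one must verify that the two-sided action by Kronecker products of permutations on the unfolding $\mathcal{G}_{\mathcal{I}}$ corresponds exactly to the mode-wise multilinear action $(\Pi_1^\top, \dots, \Pi_d^\top).\mathcal{G}^{\#}$, which requires the unfolding/folding conventions of Section~\ref{app:unfoldorderd} to interact correctly with the ordering of the Kronecker factors within $\mathcal{I}$ and $\mathcal{I}^-$. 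This is exactly where the order-$3$ argument of Theorem~\ref{thm:order2unfold} specializes a general combinatorial bookkeeping; everything else is a direct, if notationally heavier, transcription of that proof.
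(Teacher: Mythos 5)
Your proof is correct and follows essentially the same route as the paper's: feasibility via Property~\ref{lem:12flatenningchangeofbasisorderd}, identifiability of the two Kronecker blocks and the core unfolding via Theorem~\ref{thm:nmfmain}, disaggregation via Corollary~\ref{corr:kronpdtpermd} with the identity $\Pi_S\Pi_S^* = \otimes_{j\in S}\Pi_j$, and the final two-sided permutation action on $\mathcal{G}_{\mathcal{I}}^{\#}$ folded back into tensor form. The folding step you flag as a potential obstacle is exactly what the paper also handles (by asserting that the rewriting "in the tensor structure" gives the mode-wise action), so there is no substantive difference.
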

\begin{proof}
We first want to show that there exists a feasible solution to the optimization problem in (\ref{eq:idea0tensororderd}). Since $(U^{\#}_1,\dots,U^{\#}_d).\mathcal{G}^{\#}$ is the nTD induced by Assumptions \ref{ass:orderdunfoldSSC}, using Lemma~\ref{lem:12flatenningchangeofbasisorderd}, we get that
\begin{equation}\label{eq:order4groundstate}
  \mathcal T_{\mathcal{I}} = U_{\mathcal{I}^{-}}^{\#} \mathcal{G}^{\#}_{\mathcal{I}} (U_{\mathcal{I}}^{\#})^\top, \text{ where }  U_{S}^{\#} = \otimes_{i \in S}U^{\#}_i \text{ for all } S \in \{\mathcal{I},\mathcal{I}^{-}\}.  
\end{equation}

By Assumption \ref{ass:orderdunfoldSSC}, since $\otimes_{i \in S}U_i^{\#}$ satisfy $SSC$ for $S \in \{\mathcal{I},\mathcal{I}^{-}\}$  and $\rank(\mathcal G^{\#}_{\mathcal{I}}) = \prod_{i \in \mathcal{I}}r_i = \prod_{j \in \mathcal{I}^-}r_j$, solving \eqref{eq:idea0tensororderd} will find matrices $(U^*_{\mathcal{I}},U^*_{\mathcal{J}},G^*)$ such that there exist permutation matrices $\Pi'_{S}$ where  $U^*_S =(\otimes_{i \in S}U_i^{\#})\Pi'_S$ for all $S \in \{\mathcal{I},\mathcal{I}^{-}\}$ and $G^* = (\Pi'_{\mathcal{I}^-})^\top \mathcal G^{\#}_{\mathcal{I}} \Pi_{\mathcal{I}}$ by Theorem~\ref{thm:nmfmain}.

Since at the end of Step 2, we have recovered matrices $U_i^*$ for all $i \in [d]$ and permutation matrices $\Pi_{\mathcal{I}}^*,\Pi_{\mathcal{I}^-}^*$ such that 
\begin{align*}
    \otimes_{i \in S} U_i^*  = U_{S}^* \Pi_S^* = (\otimes_{i \in S} U_i^{\#})\Pi_{S} \Pi_{S}^* \text{ for all } S \in \{\mathcal{I},\mathcal{I}^{-}\}.
\end{align*}
Using Corollary \ref{corr:kronpdtpermd} (see below), this implies that there exist permutation matrices $\Pi_i$ for all $i \in [d]$  such that $U_i^* = U_i^{\#} \Pi_i$ for all $i \in [d]$. Note that, from this one can further deduce that $\Pi_{S}\Pi_{S}^* = \prod_{i \in S}\Pi_i$ for all $S \in \{\mathcal{I},\mathcal{I}^{-}\}$.

Rewriting $\mathcal G^*_{\mathcal{I}} = (\Pi^*_{\mathcal{I}^-})^\top G^* \Pi^*_{\mathcal{I}} = (\otimes_{i \in \mathcal{I}^-} \Pi_i)^\top \mathcal{G}^{\#}_{\mathcal{I}} (\otimes_{i \in \mathcal{I}}\Pi_i)$ in the tensor structure gives us that $\mathcal{G}^* = (\Pi_1^\top,\dots,\Pi_d^\top).\mathcal{G}^{\#}$.
\end{proof}

In order to compute the permutations in step 2 of Procedure $d$.0, Property \ref{prop:kronpdtperm} can be extended to a Kronecker product of $d \geq 2$ matrices. 
This follows from the following facts:
\begin{itemize}
    \item If $U_i^\top e = e$ for $i \in [d]$, then for any $k \leq d$, $(\otimes_{i \in [k]} U_i)^\top e = e$.
    \item For matrices $A,B$, $\rank(A \otimes B) = \rank(A)\rank(B)$.
\end{itemize}
Let $(\otimes_{i \in [d]} U^{*}_i) = Y = X\Pi = (\otimes_{i \in [d]} U^{\#}_i) \Pi$. 
Then one can apply Property~\ref{prop:kronpdtperm} to the two factors $\otimes_{i \in [d-1]} U^{*}_i$  and $U_d^*$, which satisfy the conditions of Property \ref{prop:kronpdtperm} due to the above facts. 
This gives us that $\otimes_{i \in [d-1]} U^{*}_i = (\otimes_{i \in [d-1]} U^{\#}_i)\Pi'$ for some permutation $\Pi'$, and we can then proceed inductively. Hence, we get the following corollary.
\begin{corollary}\label{corr:kronpdtpermd}
Let $X$ be an $\prod_{i=1}^dn_i \times \prod_{i=1}^d r_i$ matrix with a decomposition of the form $X = U^{\#}_1 \otimes \dots \otimes U^{\#}_d$ where $U^{\#}_i \in \R^{n_i \times r_i}$ are full column rank matrices for all $i \in [d]$. Let $Y = X \Pi$ where $\Pi$ is a permutation matrix of dimension $\prod_{i=1}^d r_i $, 
and $Y = U^*_1 \otimes \dots \otimes U^*_d$ where $U^*_i \in \R^{n_i \times r_i}$ are full column rank matrices for all $i \in [d]$. Moreover, $(U_i^*)^\top e = (U_i^{\#})^\top e = e$ for all $i \in [d]$. Then there exist permutation matrices $\Pi_i$ such that $U_i^* = U_i^{\#} \Pi_i$ for all $i \in [d]$.
\end{corollary}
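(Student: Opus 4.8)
The plan is to proceed by induction on the number of factors $d$, using Property~\ref{prop:kronpdtperm} both as the base case ($d=2$, where the statement is Property~\ref{prop:kronpdtperm} verbatim) and as the engine of the inductive step. The whole argument rests on two elementary but essential closure properties of the Kronecker product: ranks multiply, $\rank(A \otimes B) = \rank(A)\rank(B)$, so a Kronecker product of full-column-rank matrices is again full column rank; and column-stochasticity is inherited, since the mixed-product property gives $(A \otimes B)^\top e = (A^\top e)\otimes (B^\top e)$, whence $A^\top e = e$ and $B^\top e = e$ together force $(A\otimes B)^\top e = e$.

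For the inductive step I would assume the statement for $d-1$ factors and group the first $d-1$ factors. Set $V^{\#} := U^{\#}_1 \otimes \dots \otimes U^{\#}_{d-1}$ and $V^* := U^*_1 \otimes \dots \otimes U^*_{d-1}$, so that $X = V^{\#} \otimes U^{\#}_d$ and $Y = V^* \otimes U^*_d$. By rank-multiplicativity, $V^{\#}$ has $\prod_{i \le d-1} r_i$ columns and rank $\prod_{i \le d-1} r_i$, hence is full column rank, and likewise for $V^*$; by the inheritance of the normalization, $(V^{\#})^\top e = e$ and $(V^*)^\top e = e$. Thus the two-factor decompositions $X = V^{\#} \otimes U^{\#}_d$ and $Y = V^* \otimes U^*_d$ meet the hypotheses of Property~\ref{prop:kronpdtperm}, and applying it yields a permutation matrix $\Pi'$ of size $\prod_{i\le d-1} r_i$ together with a permutation matrix $\Pi_d$ such that $V^* = V^{\#}\Pi'$ and $U^*_d = U^{\#}_d \Pi_d$.

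It then remains to unpack $V^* = V^{\#}\Pi'$, which is exactly the hypothesis of the Corollary at level $d-1$, with $\Pi'$ as the outer permutation: $V^{\#} = U^{\#}_1 \otimes \dots \otimes U^{\#}_{d-1}$ is full column rank, $V^* = U^*_1 \otimes \dots \otimes U^*_{d-1}$ is full column rank and equal to $V^{\#}\Pi'$, and both families satisfy the normalization by the original hypothesis. The induction hypothesis therefore supplies permutation matrices $\Pi_1,\dots,\Pi_{d-1}$ with $U^*_i = U^{\#}_i \Pi_i$ for $i \in [d-1]$, which together with $U^*_d = U^{\#}_d \Pi_d$ completes the step.

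I do not expect a genuine conceptual obstacle; the argument is a clean induction once the two closure properties are in place. The only point requiring care is bookkeeping: one must verify that the grouped factors $V^{\#}$ and $V^*$ still satisfy both the full-column-rank and the $(\cdot)^\top e = e$ requirements, so that Property~\ref{prop:kronpdtperm} and the induction hypothesis genuinely apply, and one must track that the permutation $\Pi'$ produced by the two-factor split is precisely the outer permutation consumed by the induction hypothesis. Everything else is routine.
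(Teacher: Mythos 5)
Your proposal is correct and follows essentially the same route as the paper: the paper also groups the first $d-1$ factors against $U_d$, invokes the two closure facts (rank multiplicativity of $\otimes$ and inheritance of the normalization $(\cdot)^\top e = e$), applies Property~\ref{prop:kronpdtperm} to this two-block split, and then "proceeds inductively." Your write-up merely makes the paper's sketched induction explicit, with the same base case and the same bookkeeping.
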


\paragraph{All-at-once approach}

Similarly as done in Section~\ref{sec:unfoldings}, instead of Procedure $d$.0, one can consider the following all-at-once procedure for computing the nTD via penalization: for any $\lambda_1,\lambda_2 > 0$, solve the following min-vol order-$2$ NTD problem: 
\begin{equation}\label{eq:optproblemhigherorderunfoldingalt}
\begin{split}
    \min_{U_i \in \R_+^{n_i \times r_i} \text{ for }  i \in [d], \ \mathcal{G}_{\mathcal{I}}} 
    & \left|\det(\mathcal{G}_{\mathcal{I}})\right| + \lambda_1 \left(\|U_{\mathcal{I}^-} - \bigotimes_{i \in \mathcal{I}^-}U_i \|_F^2\right) + \lambda_2\left(\|U_{\mathcal{I}} - \bigotimes_{i \in \mathcal{I}}U_i \|_F^2\right) \\
    \text{such that } &\mathcal T_{\mathcal{I}} = U_{\mathcal{I}^-}\det(\mathcal G_{\mathcal{I}})U_{\mathcal{I}}^\top,  U_i^\top e = e \text{ and } U_i \geq 0 \text{ for } i \in [d], 
\end{split}
\end{equation}
where $\mathcal{G}_{\mathcal{I}} \in \R^{\Pi_{j \in \mathcal{I}^{-}}r_j \times \Pi_{i \in \mathcal{I}} r_i}$, 
to obtain an optimal solution $(U^*_1,\dots,U^*_d,\mathcal{G}^*_{\mathcal{I}})$. By a result similar to Theorem \ref{thm:unfoldsorder3alternate}, one can also show that, under Assumption~\ref{ass:orderdunfoldSSC}, for all $\lambda_1,\lambda_2 > 0$, an nTD obtained by solving (\ref{eq:optproblemhigherorderunfoldingalt}) is essentially unique.

\subsection{Using Slices}\label{app:orderdslices}

Let us define slices of higher-order tensors.  
\begin{definition}[Slices for Higher-Order]\label{def:sliceshigherorder}
Let $\mathcal T\in \mathbb{R}^{n_1 \times \dots \times n_d}$ be an order-$d$ tensor. 
For $i < j \in [d]$ and $k_p \in [n_p]$ for $p \neq i,j$, 
we define the $[i,j]$-slices of dimension $n_i \times n_j$ as 
\begin{equation}\label{def:slicesgen}
    (\mathcal T^{[i,j]}_{k_1, \dots, k_{i-1}, k_{i+1}, \dots, k_{j-1}, k_{j+1}, \dots, k_d})_{k_i,k_j} 
    = \mathcal T_{k_1, \dots, k_d}, \quad \text{ for } k_i \in [n_i] \text{ and } k_j \in [n_j].
\end{equation}    
\end{definition} 
Extending Definition~\ref{def:slices} of the slices of order-$3$ tensors directly to order-$d$ would lead to the notation $T^{(1,\dots,i-1,i+1,\dots,j-1,j+1, \dots,d)}_{k_1, \dots, k_{i-1}, k_{i+1}, \dots, k_{j-1}, k_{j+1}, \dots, k_d}$. However, we use the notation from~\eqref{def:slicesgen} for simplicity of the exposition.

\begin{remark}
The slices have been defined in this way (by fixing $d-2$ indices) for order-$d$ tensors in order to still preserve the fact that each $[i,j]$-slice is indeed an $n_i \times n_j$ matrix. We will see in Section \ref{sec:orderdunfoldandslices} that one could relax this notion of slices by fixing $j \leq d-2$ indices which would result in an order-$(d-j)$ subtensor and consider the slices to be a suitable flattening of the subtensor. 
\end{remark}

Property \ref{lem:changeofbasisslices} can be generalized to order-$d$ tensors in the following way. 
\begin{lemma}\label{lem:orderdslices}
Let $\mathcal T$ be an order-$d$ tensor such that there exists a decomposition of the form $\mathcal T= (U_1,\dots,U_d).\mathcal{G}$. Then the $[1,2]$-slices of $\mathcal T$ are given by
$$
\mathcal T^{[1,2]}_{k_3, \dots, k_d} = U_1 S^{[1,2]}_{k_3, \dots, k_d}U_2^{\top}, 
$$ 
where $$ 
S^{[1,2]}_{k_3, \dots, k_d} = \sum_{t_i \in [r_i] \text{ for } i \in \{3, \dots, d\}} \Big(\prod_{i=3}^d (U_i)_{k_i,t_i} \Big) \mathcal{G}^{[1,2]}_{t_3, \dots, t_d}
$$ 
and 
$\mathcal{G}^{[1,2]}_{t_3, \dots, t_d}$ are the $[1,2]$-slices of the tensor $\mathcal{G}$.
\end{lemma}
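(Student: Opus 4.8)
The plan is to expand the multilinear Tucker product entrywise and then regroup the summation indices. First I would write the $(k_1,k_2)$ entry of the slice as $(\mathcal T^{[1,2]}_{k_3,\dots,k_d})_{k_1,k_2} = \mathcal T_{k_1,k_2,k_3,\dots,k_d}$ and substitute the order-$d$ analogue of the multilinear definition~\eqref{eq:TuckerDecdefn},
$$
\mathcal T_{k_1,\dots,k_d} = \sum_{i_1 \in [r_1], \dots, i_d \in [r_d]} \Big( \prod_{t=1}^d (U_t)_{k_t,i_t} \Big) \mathcal G_{i_1,\dots,i_d}.
$$

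Next I would separate the summation indices into the two matrix indices $(i_1,i_2)$, which stay free, and the contracted indices $(i_3,\dots,i_d)$, which are coupled to the fixed values $(k_3,\dots,k_d)$ only through the factors $U_3,\dots,U_d$. Factoring $(U_1)_{k_1,i_1}(U_2)_{k_2,i_2}$ out of the inner sum over $(i_3,\dots,i_d)$ gives
$$
(\mathcal T^{[1,2]}_{k_3,\dots,k_d})_{k_1,k_2} = \sum_{i_1,i_2} (U_1)_{k_1,i_1} (U_2)_{k_2,i_2} \sum_{i_3,\dots,i_d} \Big( \prod_{t=3}^d (U_t)_{k_t,i_t} \Big) \mathcal G_{i_1,i_2,i_3,\dots,i_d}.
$$

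The key observation is that, by Definition~\ref{def:sliceshigherorder} applied to the core tensor, $\mathcal G_{i_1,i_2,i_3,\dots,i_d} = (\mathcal G^{[1,2]}_{i_3,\dots,i_d})_{i_1,i_2}$, so after renaming the dummy indices $i_3,\dots,i_d$ as $t_3,\dots,t_d$ the inner sum is exactly the $(i_1,i_2)$ entry of the matrix $S^{[1,2]}_{k_3,\dots,k_d}$ defined in the statement. Substituting this back, the remaining double sum over $(i_1,i_2)$ is precisely the entrywise expansion of the matrix product $U_1 S^{[1,2]}_{k_3,\dots,k_d} U_2^\top$, which proves the claim.

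This is a routine index manipulation and I do not expect any real obstacle; the only point requiring care is the multi-index bookkeeping, in particular verifying that $U_3,\dots,U_d$ couple solely to the contracted indices and therefore assemble into the scalar weights $\prod_{i=3}^d (U_i)_{k_i,t_i}$ that define $S^{[1,2]}_{k_3,\dots,k_d}$. The general $[i,j]$-slice follows by the identical computation after relabeling the modes.
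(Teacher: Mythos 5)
Your proof is correct and matches the paper's approach: the paper states Lemma~\ref{lem:orderdslices} without an explicit proof, but its proof of the more general $(\mathcal{J},\mathcal{K})$-slices lemma in Section~\ref{sec:orderdunfoldandslices} proceeds by exactly the same entrywise expansion of the multilinear transformation, regrouping the free indices $(i_1,i_2)$ against the contracted ones, and recognizing the inner sum as a slice-combination of $\mathcal{G}$. Your index bookkeeping is sound, so there is nothing to add.
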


\subsubsection{Approach 1: $d-1$ modes of the input tensor have one slice with maximum rank} 

The idea from Section~\ref{sec:Idea1} can be generalized to any order by computing one min-vol order-2 nTD to recover $U_1$ and $U_2$, and $d-2$ min-vol MF to recover the other $U_i$'s.  
The following assumptions generalizes Assumption~\ref{ass:order3idea1}, that is, there exist at least one slice along $d$ different directions such that each of the slice achieves maximum possible rank.

\begin{assumptions}\label{ass:orderdslicesindfactors}
The order-$d$ tensor $\mathcal T\in \R^{n_1 \times \dots \times n_d} = (U^{\#}_1,\dots,U^{\#}_d).\mathcal{G}^{\#}$ satisfies Assumption~\ref{ass:generalassorderd}, and 
\begin{enumerate}

    \item $U^{\#}_i$ satisfies the SSC  
    for $i \in [d]$.  

    \item For all $i \in \{2,\dots,d\}$, there exist indices $k^{(i)} = \{k^{(i)}_1,\dots,k^{(i)}_{d-2}\}$ such that $\rank(\mathcal T^{[1,i]}_{k^{(i)}}) = r_i$.
        
\end{enumerate}
\end{assumptions}


    

Let us now describe how to identify order-$d$ nTDs satisfying Assumption~\ref{ass:orderdslicesindfactors}, following the same idea as in Procedure~1. \vspace{0.1cm} 

\fbox{%
	\parbox{0.95\linewidth}{%
    \begin{center} \vspace{-0.2cm}   
       \textbf{Procedure~$d$.1 : Unique order-$d$ nTD under Assumption~\ref{ass:orderdslicesindfactors}} 
    \end{center}
\begin{enumerate}
    \item Computation of $U^*_1, U^*_2$: Solve the following min-vol order-2 nTD problem                      
    \begin{equation}\label{eq:idea1tensororderd}
    \min_{U_1,U_2,S_2}  |\det(S_{2})| \text{ such that } 
     \mathcal T^{[1,2]}_{k^{(2)}} = U_1 S_2 U_2^{\top},  
     U_i^\top e = e \text{ and } U_i \geq 0 \text{ for } i \in [2], 
    \end{equation}
    to obtain an optimal solution $ (U^*_1,U^*_2,S^*_1)$. 
    
    \item Computation of $U^*_i$: For all $i \in [d] \setminus \{1,2\}$, let $\mathcal T'_{k^{(i)}} = (U^*_1)^{\dagger}\mathcal T^{[1,i]}_{k^{(i)}}$ and solve the following 
    min-vol NMF problem                      
    \begin{equation}\label{eq:idea1tensororderdb}
   \min_{U_i,S_{i}} \det(S_{i}^\top S_{i}) 
   \text{ such that }\mathcal T'_{k^{(i)}} = S_{i} U_i^\top,  U_i^{\top} e = e \text{ and } U_i\geq 0, 
    \end{equation} 
to obtain an optimal solution $(U^*_i,S^*_{i})$. 
    


    \item Computation of $\mathcal{G}^* = ((U^{*}_1)^{\dagger},\dots,(U^{*}_d)^{\dagger}). \mathcal T$.   \vspace{-0.1cm}  
\end{enumerate}
	}%
}


\vspace{0.1cm}


As for order-$3$ nTD in Theorem~\ref{thm:idea1order3}, we now show that, under Assumption~\ref{ass:orderdslicesindfactors}, a decomposition computed by Procedure~$d$.1 is essentially unique. 

\begin{theorem} \label{thm:orderdslices1}  
Let $\mathcal T = (U^{\#}_1,\dots,U^{\#}_d).\mathcal{G}^{\#}$ satisfy 
Assumption~\ref{ass:orderdslicesindfactors}.  
Then for any other decomposition $(U^{*}_1,\dots,U^{*}_d).\mathcal{G}^{*}$ of $\mathcal T$ obtained with Procedure~$d$.1, 
there exist permutation matrices $\Pi_i$ such that 
$U^{*}_i = U^{\#}_i \Pi_i$ for all $i \in [d]$ and $\mathcal{G}^{*} = (\Pi_1^\top,\dots,\Pi_d^\top).\mathcal{G}^{\#}$.  
\end{theorem}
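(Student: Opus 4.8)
The plan is to follow the template of the proof of Theorem~\ref{thm:idea1order3}, replacing the single ``extra'' factor $U_3$ by the $d-2$ factors $U_3^{\#},\dots,U_d^{\#}$, each recovered by an independent min-vol NMF. First I would establish feasibility of the ground truth. By symmetry of the slice construction, Lemma~\ref{lem:orderdslices} applies to every $[1,i]$-slice, so for each $i \in \{2,\dots,d\}$ one can write $\mathcal T^{[1,i]}_{k^{(i)}} = U_1^{\#} S_i^{\#} (U_i^{\#})^\top$, where $S_i^{\#}$ is the $r_1 \times r_i$ combination of the $[1,i]$-slices of $\mathcal G^{\#}$ weighted by the products $\prod_{j \ne 1,i}(U_j^{\#})_{k_j^{(i)}, t_j}$. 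For $i=2$ this shows $(U_1^{\#}, S_2^{\#}, U_2^{\#})$ is feasible for~\eqref{eq:idea1tensororderd}; and since the SSC forces $U_1^{\#}$ to have full column rank, $(U_1^{\#})^\dagger \mathcal T^{[1,i]}_{k^{(i)}} = S_i^{\#} (U_i^{\#})^\top$, so $(S_i^{\#}, U_i^{\#})$ is feasible for~\eqref{eq:idea1tensororderdb}.

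Next I would recover $U_1$ and $U_2$. Since $U_1^{\#}$ and $U_2^{\#}$ satisfy the SSC and $\rank(\mathcal T^{[1,2]}_{k^{(2)}}) = r_2 = r_1$ (so that $S_2$ is square and $|\det(S_2)|$ is well defined), Theorem~\ref{thm:nmfmain} applies to~\eqref{eq:idea1tensororderd} and yields permutation matrices $\Pi_1,\Pi_2$ with $U_1^{*} = U_1^{\#}\Pi_1$, $U_2^{*} = U_2^{\#}\Pi_2$, and $S_2^{*} = \Pi_1^\top S_2^{\#} \Pi_2$. The observation that decouples the remaining steps is that, because $\Pi_1$ is orthogonal, $(U_1^{*})^\dagger = \Pi_1^\top (U_1^{\#})^\dagger$, whence the projected matrix in Step~2 satisfies $\mathcal T'_{k^{(i)}} = (U_1^{*})^\dagger \mathcal T^{[1,i]}_{k^{(i)}} = \Pi_1^\top S_i^{\#} (U_i^{\#})^\top$ for every $i \ge 3$. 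Thus the NMF built from $U_1^{*}$ coincides with the one built from $U_1^{\#}$ up to the left factor $\Pi_1^\top$; in particular it has rank exactly $r_i$ and retains $U_i^{\#}$ (which satisfies the SSC) as its SSC factor. Applying the identifiability of min-vol NMF (Theorem~\ref{thm:idenminvol}) to each~\eqref{eq:idea1tensororderdb} separately then gives a permutation $\Pi_i$ with $U_i^{*} = U_i^{\#}\Pi_i$, the residual $\Pi_1$ being absorbed into the volume-minimized factor $S_i$ without affecting $U_i^{*}$.

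Finally, I would recover the core. Since each $U_i^{\#}$ has full column rank, $(U_i^{\#})^\dagger U_i^{\#} = I_{r_i}$, and hence $\mathcal G^{\#} = ((U_1^{\#})^\dagger,\dots,(U_d^{\#})^\dagger).\mathcal T$. Using $U_i^{*} = U_i^{\#}\Pi_i$ and $(U_i^{*})^\dagger = \Pi_i^\top (U_i^{\#})^\dagger$ for all $i$, the third step of Procedure~$d$.1 returns
$$\mathcal G^{*} = ((U_1^{*})^\dagger,\dots,(U_d^{*})^\dagger).\mathcal T = (\Pi_1^\top,\dots,\Pi_d^\top).\mathcal G^{\#},$$
which is the claimed relation; the reconstruction $\mathcal T = (U_1^{*},\dots,U_d^{*}).\mathcal G^{*}$ then follows by substituting the factorizations back, exactly as in the last display of the proof of Theorem~\ref{thm:idea1order3}.

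The only genuinely new point compared with the order-$3$ case is the bookkeeping that the $d-2$ min-vol NMF sub-problems are mutually independent and that the permutation $\Pi_1$ produced when recovering $U_1$ propagates only into the volume-minimized factors $S_i$, leaving each $U_i^{*}$ determined up to its own permutation $\Pi_i$. I expect this to be the main (though mild) obstacle: it rests on the generalization of the slice identity (Lemma~\ref{lem:orderdslices}) to all $[1,i]$-slices and on checking that each projected matrix $\mathcal T'_{k^{(i)}}$ has rank exactly $r_i$ so that Theorem~\ref{thm:idenminvol} is applicable, both of which are direct consequences of the full column rank of the factors guaranteed by the SSC.
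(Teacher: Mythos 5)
Your proposal is correct and follows essentially the same route as the paper's proof: feasibility of the ground truth via Lemma~\ref{lem:orderdslices}, recovery of $U_1^{\#},U_2^{\#}$ from~\eqref{eq:idea1tensororderd} via Theorem~\ref{thm:nmfmain}, recovery of each remaining $U_i^{\#}$ from the independent min-vol NMF subproblems~\eqref{eq:idea1tensororderdb} (with the residual $\Pi_1$ absorbed into $S_i$), and the core via pseudo-inverses. The bookkeeping point you flag — that $\mathcal T'_{k^{(i)}}$ built from $U_1^{*}$ equals $\Pi_1^\top$ times the one built from $U_1^{\#}$ — is exactly the step the paper uses as well.
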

\begin{proof}
First, we show that if there exists an nTD $(U_1^{\#},\dots,U_d^{\#}).\mathcal{G}^{\#}$ of $\mathcal T$ that satisfies Assumptions \ref{ass:orderdslicesindfactors}, then there exist feasible solutions to the optimization problems in \eqref{eq:idea1tensororderd} and \eqref{eq:idea1tensororderdb}. Using Lemma \ref{lem:orderdslices},  $\mathcal T^{[1,2]}_{k^{(2)}} = U^{\#}_1 S^{[1,2]}_{k^{(2)}}(U^{\#}_2)^{\top}$ such that
\begin{equation}\label{eq:D12}
  S^{[1,2]}_{k^{(2)}} = \sum_{t_i \in [r_i] \text{ for } i \in [d] \setminus \{1,2\}} \Big(\prod_{j \in [d] \setminus  \{1,2\}}(U^{\#}_j)_{k^{(2)}_j,t_j} \Big)\mathcal{G}^{[1,2]}_{t_3,\dots,t_d}. 
\end{equation}
such that $\mathcal{G}^{[1,2]}_{t_3,\dots,t_d}$ are the $[1,2]$-slices of $\mathcal{G}^{\#}$. Hence, $(U^{\#}_1,U^{\#}_2,S^{(3,4)}_{i_3,i_4}) $ is a feasible solution to the optimization problem in \eqref{eq:idea1tensororderd}.

Then using Lemma~\ref{lem:orderdslices}, we have $\mathcal T^{[1,i]}_{k^{(i)}} = U^{\#}_1S^{[1,i]}_{k^{(i)}}(U^{\#}_i)^\top$ for all $i \in [d] \setminus \{1,2\}$. Since $U_1^{\#}$ satisfies the SSC , $\rank(U_1^{\#}) = r$ and hence, 
\begin{equation}\label{eq:T'ki}
\mathcal T'_{k^{(i)}} = (U_1^{\#})^{\dagger}\mathcal T^{[1,i]}_{k^{(i)}} = S^{[1,i]}_{k^{(i)}}(U^{\#}_i)^\top.    
\end{equation}
Using the fact that $\rank(\mathcal T^{(1,i)}_{k^{(i)}}) = r_3 \leq r$, we get  $\rank(\mathcal T'_{k^{(i)}}) = r_3$. It follows that $( S^{[1,i]}_{k^{(i)}}, U^{\#}_i)$ is indeed a feasible solution to~(\ref{eq:idea1tensororderdb}) for all $i \in [d] \setminus \{1,2\}$.


Now we want to show that a solution $(U^*_1,\dots,U^*_d).\mathcal{G}^*$ returned by the procedure is essentially unique. 
Let $(U^*_1,S_2^*,U^*_2)$ be an optimal solution to the \eqref{eq:idea1tensororderd}. Since $\rank(\mathcal T^{[1,2]}_{k^{(2)}}) = r$, following the proof of Theorem \ref{thm:nmfmain}, there exist permutation matrices $\Pi_1, \Pi_2$ such that $U_1^* = U_1^{\#} \Pi_1$, $U_2^* = U_2^{\#} \Pi_2$ and $S^*_1= \Pi_1^\top S^{[1,2]}_{k^{(2)}} \Pi_2$.

Let $\mathcal T^*_{k^{(i)}} = (U_1^{*})^{\dagger}(\mathcal T^{[1,i]}_{k^{(i)}})$. Then using the previous result, we also have that $\mathcal T^*_{k^{(i)}} = \Pi_1^\top \mathcal T'_{k^{(i)}}$ where $\mathcal{T}'_{k^{(i)}}$ is as mentioned in \eqref{eq:T'ki}. Let $(S^*_i,U_i^*)$ be an optimal solution to the optimization problem in \eqref{eq:idea1tensororderdb} when run on $\mathcal T^*_{k^{(i)}}$ for all $i \in [d] \setminus \{1,2\}$. Then $(\Pi^\top_1 S^{[1,i]}_{k^{(i)}},U_i^*)$ is also an optimal solution to the second step of the optimization problem in \eqref{eq:idea1tensororderdb} when run on $T^*_{k^{(i)}}$ for all $i \in [d] \setminus \{1,2\}$. Again following the proof of Theorem \ref{thm:idenminvol}, we can conclude that there exists a permutation matrix $\Pi_i$ such that $S^*_i = \Pi_1^\top S^{[1,i]}_{k^{(i)}} \Pi_i$ and $U_i^* = U_i^{\#} \Pi_i$ for all $i \in [d] \setminus \{1,2\}$. 
Moreover, since $U^{\#}_i$ have full column rank, we have that $(U^{\#}_i)^{\dagger}U^{\#}_i = I_{r_i}$ for all $i \in [d]$. Using this, we already have that $\mathcal{G}^{\#} = ((U^{\#}_1)^{\dagger},\dots,(U^{\#}_d)^{\dagger}).\mathcal T$, and we conclude that 
\begin{align*}
\mathcal{G}^{*} = ((U_1^*)^{\dagger
},\dots,(U_d^*)^{\dagger
}).T &= (\Pi^\top_1, \dots,\Pi^\top_d).\Big(((U_1^{\#})^{\dagger
},\dots,(U_d^{\#})^{\dagger
}).T\Big) = (\Pi^\top_1, \dots,\Pi^\top_d).\mathcal{G}^{\#}    
\end{align*}

Finally, using, the following relation
$$ 
\mathcal T = (U^{\#}_1,\dots, U_d^{\#}).\mathcal{G}^{\#} = (U^{*}_1\Pi^\top_1,\dots,U^{*}_d\Pi^\top_d).\Big((\Pi_1, \dots, \Pi_d)\mathcal{G}^*\Big) = (U^{*}_1\dots,U_d^*).\mathcal{G}^*, 
$$
the decomposition returned by the procedure is an $(r,r,r_3,\dots,r_d)$-nTD of the tensor $\mathcal T$.
\end{proof}

\begin{remark}[Variant for Assumption \ref{ass:orderdslicesindfactors}(2)]
    Let $S \subset \binom{[d]}{2}$ where $|S| \geq d$. We say $S$ \textit{covers} $[d]$ if for all $j \in [d]$, there exists $i \in [d]$ such that $(i,j) \in S$ or $(j,i) \in S$. Then Assumption \ref{ass:orderdslicesindfactors}(2) can be replaced with the following one: There exists $S$ which covers $[d]$ such that for all $(i,j) \in S$, there exists $k^{(i,j)}_1,\dots,k^{(i,j)}_{d-2}$ such that $\mathcal T^{(i,j)}_{k^{(i,j)}_1,\dots,k^{(i,j)}_{d-2}}$ achieves maximum possible rank. The restriction on the dimensions of the core tensor needs to be imposed accordingly. And then, one can devise a procedure similar to Procedure~$d$.1 to compute the factors and the core tensor.
\end{remark}

\subsubsection{Approach 3 for order-$d$ tensors: fully generalized  slices}\label{sec:orderdunfoldandslices}


In this section, we generalize Approach 3 from Section \ref{sec:Idea3} to higher orders. Recall that, in that approach, we had rank guarantees on a slice along a certain mode and on the unfoldings of the core tensor along the same mode. For any two indices $i \neq j$, $[i,j]$-slices for an order-$d$ tensor $\mathcal{T} \in \R^{n_1 \times \dots \times n_d}$ are defined in Definition~\ref{def:sliceshigherorder} as $n_i \times n_j$ matrices formed by fixing $d-2$ indices of the tensor and letting the other two indices vary. However, one can generalize this definition by fixing a subset of indices $\mathcal{J} \subsetneq [d]$. 
The resulting slices are subtensors of order $\left(d - |\mathcal{J}|\right)$. 
Then, given another subset $\mathcal{K} \subsetneq [d] \setminus \mathcal{J}$, we call the slices to be the $\mathcal{K}$-unfoldings of the corresponding subtensors (following the definition of unfoldings of order-$d$ tensors).

Let $[d] = \mathcal{I} \sqcup \mathcal{J} \sqcup \mathcal{K}$, where $\sqcup$ denotes a partition into disjoint non-empty subsets, that is, 
$[d]  = \mathcal{I} \cup \mathcal{J} \cup \mathcal{K}$, 
$\mathcal{I} \neq \varnothing, \mathcal{J} \neq \varnothing, \mathcal{K} \neq \varnothing$,
 $\mathcal{I} \cap \mathcal{J} =\varnothing$, 
 $\mathcal{I} \cap \mathcal{K} =\varnothing$, and 
 $\mathcal{J} \cap \mathcal{K} =\varnothing$. 
For any index $(i_1,\dots,i_d)$, we define $i_S$ to be the tuple with elements $i_k$ where $k \in S$ for all $S \in \{\mathcal{I}, \mathcal{J}, \mathcal{K}\}$. Then for any order-$d$ tensor $\mathcal{T} \in \R^{n_1 \times \dots \times n_d}$, we can define the $(\mathcal{J},\mathcal{K})$-slices of $\mathcal{T}$ to be the $\prod_{i \in \mathcal{I}} n_i \times \prod_{k \in \mathcal{K}} n_k$ matrices $(\mathcal{T}^{(\mathcal{J})}_{i_{\mathcal{J}}})_{i_{\mathcal{I}},i_{\mathcal{K}}} = T_{i_1, \dots, i_d}$. 
Note that ideally one should refer to these slices by $\mathcal{T}^{(\mathcal{J},\mathcal{K})}_{i_{\mathcal{J}}}$ since $\mathcal{K}$ is necessary to fix the corresponding unfoldings. But here we will just refer to them as $\mathcal{T}^{(\mathcal{J})}_{i_{\mathcal{J}}}$ because the subset of modes along which the unfolding will take place will be clear from context.

Note that since $\mathcal{I},\mathcal{J},\mathcal{K} \neq \varnothing$, for the order-$3$ tensor case, this forces $|\mathcal{I}| = |\mathcal{J}| = |\mathcal{K}| = 1 $. Then in that case, the $(\mathcal{J},\mathcal{K})$-slices correspond to the actual slices of the tensor. 

The following structural result holds for $(\mathcal{J},\mathcal{K})$-slices of $\mathcal{T}$.  
\begin{lemma}
Let $\mathcal{T} \in \R^{n_1 \times \dots \times n_d}$ be an order-$d$ tensor with TD $\mathcal{T} = (U_1,\dots,U_d).\mathcal{G}$, where $U_i \in \R^{n_i \times r_i}$ for all $i \in [d]$ and $\mathcal{G} \in \R^{r_1 \times \dots \times r_d}$. 
Then, for any $\mathcal{I},\mathcal J,\mathcal{K}$ such that $[d] = \mathcal{I} \sqcup \mathcal{J} \sqcup \mathcal{K}$,  the $(\mathcal{J},\mathcal{K})$-slices denoted by $\mathcal{T}^{(\mathcal{J})}_{i_{\mathcal{J}}}$ can be decomposed as 
\begin{equation}
    \mathcal{T}^{(\mathcal{J})}_{i_{\mathcal{J}}} = \Big(\bigotimes_{i \in \mathcal{I}} U_i\Big) D^{(\mathcal{J})}_{i_{\mathcal{J}}} \Big(\bigotimes_{k \in \mathcal{K}} U_k\Big)^\top, 
\end{equation}
where 
$$
D^{(\mathcal{J})}_{i_{\mathcal{J}}} = \sum_{i'_j \in [r_j] \text{ for all } j \in \mathcal{J}} (\prod_{k \in \mathcal{J}} (U_k)_{i_k,i'_k}) \mathcal{G}^{(\mathcal{J})}_{i'_{\mathcal{J}}}, 
$$
and $i'_{\mathcal{J}}$ is the tuple with $i'_j$ for all $j \in \mathcal{J}$ and $\mathcal{G}^{(\mathcal{J})}_{i_{\mathcal{J}}}$ are the $(\mathcal{J},\mathcal{K})$-slices of $\mathcal{G}$.

Moreover, if $D^{(\mathcal{J})}$ is the $(\prod_{i \in \mathcal{I}} r_i)(\prod_{k \in \mathcal{K} }r_k) \times (\prod_{j \in \mathcal{J} }r_j)$ matrix with its columns given by $\vect(D^{(\mathcal{J})}_{i_{\mathcal{J}}})$, then $D^{(\mathcal{J})}$ can be decomposed as
\begin{equation}
D^{(\mathcal{J})} = \mathcal{G}_{\mathcal{J}} \Big(\bigotimes_{j \in \mathcal{J}} U_j\Big)^\top . 
\end{equation}
\end{lemma}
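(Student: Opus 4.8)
The plan is to establish both identities by a single, direct entrywise computation that starts from the defining multilinear formula of the Tucker decomposition (the order-$d$ analogue of~\eqref{eq:TuckerDecdefn}) and splits every summation and product according to the three-way partition $[d] = \mathcal{I} \sqcup \mathcal{J} \sqcup \mathcal{K}$. This is the common generalization of Property~\ref{lem:changeofbasisslices} and Lemma~\ref{lem:orderdslices}, and the only genuinely new ingredient is the bookkeeping needed to keep the flattened (combined) indices consistent across the three conventions in play: the Kronecker product from Definition~\ref{def:KronPdt}, the $\vect$ operator, and the higher-order unfolding defined at the start of Section~\ref{app:unfoldorderd}.

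For the first identity I would fix the slice label $i_{\mathcal{J}}$ and write a generic entry of the slice as
\begin{equation*}
(\mathcal{T}^{(\mathcal{J})}_{i_{\mathcal{J}}})_{i_{\mathcal{I}}, i_{\mathcal{K}}} = \mathcal{T}_{i_1,\dots,i_d} = \sum_{i'_1,\dots,i'_d} \Big(\prod_{p \in [d]} (U_p)_{i_p,i'_p}\Big) \mathcal{G}_{i'_1,\dots,i'_d}.
\end{equation*}
Separating the summation index into the blocks $(i'_{\mathcal{I}}, i'_{\mathcal{J}}, i'_{\mathcal{K}})$ and invoking Definition~\ref{def:KronPdt} to rewrite each sub-product as a Kronecker-product entry, $\prod_{p \in S}(U_p)_{i_p,i'_p} = (\bigotimes_{p \in S} U_p)_{i_S,i'_S}$ for $S \in \{\mathcal{I},\mathcal{J},\mathcal{K}\}$, the expression becomes
\begin{equation*}
(\mathcal{T}^{(\mathcal{J})}_{i_{\mathcal{J}}})_{i_{\mathcal{I}}, i_{\mathcal{K}}} = \sum_{i'_{\mathcal{I}}, i'_{\mathcal{K}}} \Big(\bigotimes_{i \in \mathcal{I}} U_i\Big)_{i_{\mathcal{I}}, i'_{\mathcal{I}}} (D^{(\mathcal{J})}_{i_{\mathcal{J}}})_{i'_{\mathcal{I}}, i'_{\mathcal{K}}} \Big(\bigotimes_{k \in \mathcal{K}} U_k\Big)_{i_{\mathcal{K}}, i'_{\mathcal{K}}},
\end{equation*}
where the inner summation over $i'_{\mathcal{J}}$ has been collected into
\begin{equation*}
(D^{(\mathcal{J})}_{i_{\mathcal{J}}})_{i'_{\mathcal{I}}, i'_{\mathcal{K}}} = \sum_{i'_{\mathcal{J}}} \Big(\bigotimes_{j \in \mathcal{J}} U_j\Big)_{i_{\mathcal{J}}, i'_{\mathcal{J}}} (\mathcal{G}^{(\mathcal{J})}_{i'_{\mathcal{J}}})_{i'_{\mathcal{I}}, i'_{\mathcal{K}}}.
\end{equation*}
Since $(\bigotimes_{j \in \mathcal{J}} U_j)_{i_{\mathcal{J}}, i'_{\mathcal{J}}} = \prod_{j \in \mathcal{J}} (U_j)_{i_j,i'_j}$, this last display is exactly the definition of $D^{(\mathcal{J})}_{i_{\mathcal{J}}}$ in the statement, and recognizing the remaining double sum over $(i'_{\mathcal{I}}, i'_{\mathcal{K}})$ as a matrix product yields the claimed factorization $\mathcal{T}^{(\mathcal{J})}_{i_{\mathcal{J}}} = (\bigotimes_{i \in \mathcal{I}} U_i)\, D^{(\mathcal{J})}_{i_{\mathcal{J}}}\, (\bigotimes_{k \in \mathcal{K}} U_k)^\top$.

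For the second identity I would vectorize each slice $D^{(\mathcal{J})}_{i_{\mathcal{J}}}$, so that the combined index $(i'_{\mathcal{I}}, i'_{\mathcal{K}})$ labels the rows of $D^{(\mathcal{J})}$ and $i_{\mathcal{J}}$ labels its columns. Because $\mathcal{J}^{-1} = \mathcal{I} \cup \mathcal{K}$, the definition of the unfolding along mode $\mathcal{J}$ gives $(\mathcal{G}_{\mathcal{J}})_{(i'_{\mathcal{I}}, i'_{\mathcal{K}}),\, i'_{\mathcal{J}}} = \mathcal{G}_{i'_1,\dots,i'_d} = (\mathcal{G}^{(\mathcal{J})}_{i'_{\mathcal{J}}})_{i'_{\mathcal{I}}, i'_{\mathcal{K}}}$, while the Kronecker factor reads $\prod_{j \in \mathcal{J}}(U_j)_{i_j,i'_j} = \big((\bigotimes_{j \in \mathcal{J}} U_j)^{\top}\big)_{i'_{\mathcal{J}}, i_{\mathcal{J}}}$. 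The displayed expression for $D^{(\mathcal{J})}_{i_{\mathcal{J}}}$ then reads
\begin{equation*}
(D^{(\mathcal{J})})_{(i'_{\mathcal{I}}, i'_{\mathcal{K}}),\, i_{\mathcal{J}}} = \sum_{i'_{\mathcal{J}}} (\mathcal{G}_{\mathcal{J}})_{(i'_{\mathcal{I}}, i'_{\mathcal{K}}),\, i'_{\mathcal{J}}} \Big(\big(\bigotimes_{j \in \mathcal{J}} U_j\big)^{\top}\Big)_{i'_{\mathcal{J}}, i_{\mathcal{J}}},
\end{equation*}
which is precisely the $((i'_{\mathcal{I}}, i'_{\mathcal{K}}), i_{\mathcal{J}})$ entry of the matrix product $\mathcal{G}_{\mathcal{J}} (\bigotimes_{j \in \mathcal{J}} U_j)^{\top}$, giving $D^{(\mathcal{J})} = \mathcal{G}_{\mathcal{J}} (\bigotimes_{j \in \mathcal{J}} U_j)^{\top}$.

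The computation itself is routine; the one place requiring care, and the only place an inconsistency could creep in, is verifying that the linear ordering used to flatten the pair $(i'_{\mathcal{I}}, i'_{\mathcal{K}})$ when vectorizing the slices coincides with the ordering that the unfolding $\mathcal{G}_{\mathcal{J}}$ assigns to its combined row index $i'_{\mathcal{J}^{-1}}$, and likewise that the flattening of $i_{\mathcal{I}}$ and $i_{\mathcal{K}}$ matches the row/column indexing of the two outer Kronecker products. Provided all of these are taken to be the same fixed ordering of $\mathcal{I} \cup \mathcal{K}$ (and of $\mathcal{I}$, $\mathcal{K}$ individually), consistent with the $\vect$ and unfolding conventions fixed in Sections~\ref{sec:notation} and~\ref{app:unfoldorderd}, the identifications above line up exactly and no permutation correction is needed. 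I would therefore state this ordering convention explicitly at the outset of the proof to remove any ambiguity.
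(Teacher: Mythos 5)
Your proposal is correct and follows essentially the same route as the paper's own proof: a direct entrywise expansion of the multilinear Tucker formula, splitting the summation and product over $[d]$ into the blocks $\mathcal{I},\mathcal{J},\mathcal{K}$, collecting the $i'_{\mathcal{J}}$-sum into $D^{(\mathcal{J})}_{i_{\mathcal{J}}}$, and then matching entries of $\vect(D^{(\mathcal{J})}_{i_{\mathcal{J}}})$ against the unfolding $\mathcal{G}_{\mathcal{J}}$ and the transposed Kronecker factor. Your explicit attention to keeping the flattening order of $(i'_{\mathcal{I}}, i'_{\mathcal{K}})$ consistent across the $\vect$, Kronecker, and unfolding conventions is a welcome refinement of a point the paper's proof treats implicitly.
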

\begin{proof}
Let $\mathcal{I} \sqcup \mathcal{J} \sqcup \mathcal{K}$ be a partition of $[d]$ into disjoint non-empty subsets,  and for a tuple of indices $(i_1,\dots,i_d)$, let $i_{\mathcal{I}},i_{\mathcal{J}},i_{\mathcal{K}}$ be the tuples of indices induced by this partition. Following the definition of the multilinear transformation operation,
  we have that
  \begin{align*}
  (\mathcal{T}^{(\mathcal{J})}_{i_{\mathcal{J}}})_{i_{\mathcal{I}},i_{\mathcal{K}}} =T_{i_1,\dots,i_d} &= \sum_{i'_t \in [r_t] \text{ for all } t \in [d] }\prod_{k \in [d]}(U_k)_{i_k,i'_k} \mathcal{G}_{j_1,\dots,j_d}    \\
  &=  \Big(\bigotimes_{i \in \mathcal{I}} U_i\Big)_{i_{\mathcal{I}},i'_{\mathcal{I}}} (D^{(\mathcal{J})}_{i_{\mathcal{J}}})_{i'_{\mathcal{I}},i'_{\mathcal{K}}} \Big(\bigotimes_{i \in \mathcal{K}} U_i\Big)_{i_{\mathcal{K}},i'_{\mathcal{K}}}, 
  \end{align*}
  where 
  \begin{equation}\label{eq:DJiJ}
       D^{(\mathcal{J})}_{i_{\mathcal{J}}} = \sum_{i'_t \in [r_t] \text{ for all } t \in \mathcal{J} }(\prod_{k \in \mathcal{J}}(U_k)_{i_k,i'_k}) (\mathcal{G}^{(\mathcal{J})}_{i'_{\mathcal{J}}}), 
  \end{equation}
 and  $i'_{\mathcal{I}},i'_{\mathcal{J}},i'_{\mathcal{K}}$ is the partition of the tuple $(i'_1,\dots,i'_d)$ induced by the disjoint subsets of $[d]$.

Since $D^{\mathcal{J}}$ is a matrix of dimension $(\prod_{i \in \mathcal{I}} r_i)(\prod_{k \in \mathcal{I}} r_k) \times (\prod_{j \in \mathcal{J}} r_j)$, we will denote its entries with the indices $(i_\mathcal{I},i_{\mathcal{K}}),i_{\mathcal{J}}$. If $D^{\mathcal{J}}$ is the matrix with columns given by $\vect( D^{(\mathcal{J})}_{i_{\mathcal{J}}})$, then using \eqref{eq:DJiJ}, we get that
 \begin{align*} (D^{\mathcal{J}})_{(i_{\mathcal{I}},i_{\mathcal{K}}),i_{\mathcal{J}}} &= (D^{(\mathcal{J})}_{i_{\mathcal{J}}})_{(i_{\mathcal{I}},i_{\mathcal{K}})} \\
     &= \sum_{i'_t \in [r_t] \text{ for all } t \in \mathcal{J} }(\prod_{k \in \mathcal{J}}(U_k)_{i_k,i'_k}) (\mathcal{G}^{(\mathcal{J})}_{i'_{\mathcal{J}}})_{i_{\mathcal{I}},i_{\mathcal{K}}} \\
     &= \sum_{i'_t \in [r_t] \text{ for all } t \in \mathcal{J}}(\mathcal{G}_{\mathcal{J}})_{(i_{\mathcal{I}},i_{\mathcal{K}}),i'_{\mathcal{J}}} (\bigotimes_{j \in \mathcal{J}} U_j)^\top_{i'_{\mathcal{J}},i_{\mathcal{J}}} \\
     &= \Big(\mathcal{G}_{\mathcal{J}}. (\bigotimes_{j \in \mathcal{J}}U_j)^\top\Big)_{(i_{\mathcal{I}},i_{\mathcal{K}}),i_{\mathcal{J}}}. 
 \end{align*}
\end{proof}

In the rest of this section, we generalize the ideas from Section~\ref{sec:Idea3}  to higher orders.


\begin{assumptions}\label{ass:orderdslices} The order-$d$ tensor $\mathcal T\in \R^{n_1 \times \dots \times n_d} = (U^{\#}_1,\dots,U^{\#}_d).\mathcal{G}^{\#}$ satisfies Assumption~\ref{ass:generalassorderd}, and  
\begin{enumerate}
    
    \item There exists a partition $\mathcal{I} \sqcup \mathcal{J} \sqcup \mathcal{K}$ of $[d]$ such that 
    $\bigotimes_{i \in S} U^{\#}_i$ satisfies the SSC for  $S \in \{\mathcal{I},\mathcal{J},\mathcal{K}\}$.  

    \item There exists $i_{\mathcal{J}}$ such that $\rank(\mathcal T^{(\mathcal{J})}_{i_{\mathcal{J}}}) = \prod_{i \in \mathcal{I}} r_i = \prod_{k \in \mathcal{K}} r_k =  r$.
    
    \item The unfolding of the core tensor along the mode $\mathcal{J}$ satisfies $\rank(\mathcal{G}^{\#}_{\mathcal{J}}) = \prod_{j \in \mathcal{J}} r_j \leq r^2$. 
\end{enumerate}
\end{assumptions}

Given an order-$d$ tensor $\mathcal T$ that satisfies Assumption~\ref{ass:orderdslices}, 
let us consider the following procedure for computing a decomposition.

\vspace{0.1cm}   
\fbox{%
	\parbox{0.95\linewidth}{%
    \begin{center} \vspace{-0.2cm}   
       \textbf{Procedure~$d$.3: Unique order-$d$ nTD under Assumption~\ref{ass:orderdslices}} 
    \end{center}
\begin{enumerate}

\item Solve the following min-vol order-2 nTD problem  
\begin{equation}\label{eq:idea3tensororderd}
    \min_{U_{\mathcal{I}}, U_{\mathcal{K}}, S_1}   |\det(S_1)| 
   \text{ such that }  \mathcal T^{(\mathcal{J})}_{i_{\mathcal{J}}} = U_{\mathcal{I}} S_1 U_{\mathcal{K}}^{\top},   
      U_i^\top e =e \text{ and } U_i \geq 0 \text{ for } i \in \{\mathcal{I}, \mathcal{K}\}, 
\end{equation} 
     to obtain an optimal solution $(U_{\mathcal{I}}^*,U_{\mathcal{K}}^*,S_1^*)$. 

   \item   Form the matrix  $S^*$  to be the matrix with $i_{\mathcal{J}}$-th column as $\vect({U_{\mathcal{I}}^*}^{ \dagger}\mathcal T^{(\mathcal{J})}_{i_{\mathcal{J}}}({U_{\mathcal{K}}^*}\top)^{\dagger})$, and solve the min-vol NMF problem 
       \begin{align} \label{eq:idea3tensororderdb} 
       \min_{G,U_{\mathcal{J}}} 
        \det(G^\top G)  
     \text{ such that }  S^* = G U_{\mathcal{J}}^\top, U_{\mathcal{J}}^\top e = e, U_{\mathcal{J}} \geq 0, 
\end{align} 
to obtain an optimal solution $(G^*, U_{\mathcal{J}}^*)$. 
    \item Compute $U_i^*$ for all $i \in [d]$:  Find permutations $\Pi^*_{\mathcal{I}},\Pi^*_{\mathcal{J}},\Pi^*_{\mathcal{K}}$ such that $U^*_{S}\Pi^*_{S}$ admits a decomposition $U^*_{S}\Pi^*_{S} =  \otimes_{j \in S} U_j^*$  such that $U_j^{* \top} e = e$ for all $j \in S$
    where $S \in \{\mathcal{I},\mathcal{J},\mathcal{K}\}$; see 
    Corollary~\ref{corr:kronpdtpermd} and Theorem~\ref{thm:uniquenesskronpdt}. 

\item Computation of $\mathcal{G}^*: $ Compute the matrix $\mathcal{G}^*_{(\mathcal{J})} = (\Pi_{\mathcal{I}}^* \otimes \Pi_{\mathcal{K}}^*)^{\top} G^* \Pi_{\mathcal{J}}^*$ and fold the matrix to compute the order-$d$ tensor $\mathcal{G}^*$.
\vspace{-0.1cm}  
\end{enumerate}
}
}
\vspace{0.1cm}

One can then prove a generalization of 
Theorem~\ref{thm:idea3order3} to show that under Assumption~\ref{ass:orderdslices}, a decomposition computed by Procedure~$d$.3 is essentially unique.

\begin{theorem} \label{thm:idea3orderd}  
Let $\mathcal T$ be an order-$d$ tensor such that Assumption \ref{ass:orderdslices} is satisfied and let $ (U^{\#}_1,\dots,U^{\#}_d).\mathcal{G}^{\#}$ be the decomposition induced by these assumptions. Then for any other decomposition $ (U^*_1,\dots,U^*_d).\mathcal{G}^*$ returned by the procedure, there exist permutation matrices $\Pi_i$ for $i \in [d]$ such that $U^*_i = U^{\#}_i \Pi_i$ for all $i \in [d]$ and $\mathcal{G}^* = (\Pi_1^\top,\dots,\Pi_4^\top).\mathcal{G}^{\#}$.
\end{theorem}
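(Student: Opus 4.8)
The plan is to mirror the order-$3$ argument of Theorem~\ref{thm:idea3order3}, treating the three Kronecker products $U_{\mathcal I}^{\#} = \bigotimes_{i\in\mathcal I} U_i^{\#}$, $U_{\mathcal K}^{\#} = \bigotimes_{k\in\mathcal K} U_k^{\#}$ and $U_{\mathcal J}^{\#} = \bigotimes_{j\in\mathcal J} U_j^{\#}$ as the three ``macro-factors'' playing the roles of $U_1^{\#}$, $U_2^{\#}$ and $U_3^{\#}$, and then unpacking each Kronecker product into its individual factors at the very end. First I would invoke the structural lemma above to write the chosen slice as $\mathcal T^{(\mathcal J)}_{i_{\mathcal J}} = U_{\mathcal I}^{\#} D^{(\mathcal J)}_{i_{\mathcal J}} (U_{\mathcal K}^{\#})^\top$, where the $D^{(\mathcal J)}_{i_{\mathcal J}}$ are the ground-truth slice-core matrices from that lemma, and, using the lemma's identity $S^{\#} := D^{(\mathcal J)} = \mathcal G^{\#}_{\mathcal J}(U_{\mathcal J}^{\#})^\top$, verify that the ground-truth decomposition furnishes feasible points for both optimization problems~\eqref{eq:idea3tensororderd} and~\eqref{eq:idea3tensororderdb}.

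For step~1, the rank hypothesis $\rank(\mathcal T^{(\mathcal J)}_{i_{\mathcal J}}) = r = \prod_{i\in\mathcal I}r_i = \prod_{k\in\mathcal K}r_k$ (Assumption~\ref{ass:orderdslices}(2)) together with the SSC of $U_{\mathcal I}^{\#}$ and $U_{\mathcal K}^{\#}$ (Assumption~\ref{ass:orderdslices}(1)) puts us exactly in the hypotheses of Theorem~\ref{thm:nmfmain}. Solving~\eqref{eq:idea3tensororderd} then yields permutations $\Pi_{\mathcal I}, \Pi_{\mathcal K}$ with $U_{\mathcal I}^* = U_{\mathcal I}^{\#}\Pi_{\mathcal I}$, $U_{\mathcal K}^* = U_{\mathcal K}^{\#}\Pi_{\mathcal K}$ and $S_1^* = \Pi_{\mathcal I}^\top D^{(\mathcal J)}_{i_{\mathcal J}}\Pi_{\mathcal K}$.

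Next I would form $S^*$ as in the procedure and, copying the column-wise computation of~\eqref{eq:kronpermutation} from the order-$3$ proof, show that $\vect(S^*_{i_{\mathcal J}}) = (\Pi_{\mathcal I}\otimes\Pi_{\mathcal K})^\top \vect(D^{(\mathcal J)}_{i_{\mathcal J}})$, i.e.\ $S^* = (\Pi_{\mathcal I}\otimes\Pi_{\mathcal K})^\top S^{\#}$. Since $U_{\mathcal J}^{\#}$ has full column rank (from the SSC) and $\rank(\mathcal G^{\#}_{\mathcal J}) = \prod_{j\in\mathcal J}r_j$ (Assumption~\ref{ass:orderdslices}(3)), the factor $S^{\#} = \mathcal G^{\#}_{\mathcal J}(U_{\mathcal J}^{\#})^\top$ has rank $\prod_{j\in\mathcal J}r_j$; combined with the SSC of $U_{\mathcal J}^{\#}$ this is precisely the setting of min-vol NMF (Theorem~\ref{thm:idenminvol}), with $G$ now a tall matrix of size $r^2\times\prod_{j\in\mathcal J}r_j$ (here $\prod_{j\in\mathcal J}r_j\le r^2$ guarantees well-posedness), which is why the objective is $\det(G^\top G)$. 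Because a left row-permutation of the data leaves the $H$-factor untouched and only permutes the rows of the $W$-factor, solving~\eqref{eq:idea3tensororderdb} returns $U_{\mathcal J}^* = U_{\mathcal J}^{\#}\Pi_{\mathcal J}$ and $G^* = (\Pi_{\mathcal I}\otimes\Pi_{\mathcal K})^\top \mathcal G^{\#}_{\mathcal J}\Pi_{\mathcal J}$ for a permutation $\Pi_{\mathcal J}$.

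Finally, for steps~3--4: for each $S\in\{\mathcal I,\mathcal J,\mathcal K\}$ the recovered $U_S^*$ is a column permutation of $\bigotimes_{i\in S}U_i^{\#}$, so Corollary~\ref{corr:kronpdtpermd} produces permutations $\Pi_i$ with $U_i^* = U_i^{\#}\Pi_i$ and $\Pi_S\Pi_S^* = \bigotimes_{i\in S}\Pi_i$. Substituting these into the folding formula $\mathcal G^*_{(\mathcal J)} = (\Pi_{\mathcal I}^*\otimes\Pi_{\mathcal K}^*)^\top G^*\Pi_{\mathcal J}^*$ collapses it to $\big(\bigotimes_{i\in\mathcal I}\Pi_i\otimes\bigotimes_{k\in\mathcal K}\Pi_k\big)^\top \mathcal G^{\#}_{\mathcal J}\big(\bigotimes_{j\in\mathcal J}\Pi_j\big)$, which folds back to $\mathcal G^* = (\Pi_1^\top,\dots,\Pi_d^\top).\mathcal G^{\#}$, as claimed. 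The routine identifiability content is imported wholesale from Theorems~\ref{thm:nmfmain} and~\ref{thm:idenminvol}; the genuinely new—and most delicate—step is the permutation bookkeeping, namely checking that the macro-permutations $\Pi_{\mathcal I},\Pi_{\mathcal K},\Pi_{\mathcal J}$ emerging from the two matrix-factorization theorems compose correctly with the Kronecker-factorization permutations $\Pi_S^*$ of Corollary~\ref{corr:kronpdtpermd}, so that a single per-mode permutation $\Pi_i$ is consistent across the slice, across the core unfolding, and after folding.
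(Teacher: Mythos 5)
Your proposal is correct and follows essentially the same route as the paper's proof: feasibility of the ground truth via the structural slice lemma, Theorem~\ref{thm:nmfmain} on the chosen $(\mathcal J,\mathcal K)$-slice, the row-permutation relation $S^* = (\Pi_{\mathcal I}\otimes\Pi_{\mathcal K})^\top S^{\#}$ as in~\eqref{eq:kronpermutation}, min-vol NMF identifiability for the $\mathcal J$-step, and Corollary~\ref{corr:kronpdtpermd} to split the macro-permutations into per-mode ones before folding. The only (cosmetic) difference is that you cite Theorem~\ref{thm:idenminvol} for the second step, which is in fact the appropriate reference since~\eqref{eq:idea3tensororderdb} is a min-vol NMF problem, whereas the paper's proof cites Theorem~\ref{thm:nmfmain} there.
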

\begin{proof}
First, we show that the existence of an nTD $(U^{\#}_1,\dots,U^{\#}_d).\mathcal{G}^{\#}$ of the input tensor $\mathcal T$ that satisfies Assumptions \ref{ass:orderdslices} also proves the existence of a feasible solution to the optimization problems in \eqref{eq:idea3tensororderd} and \eqref{eq:idea3tensororderdb}. Using Lemma \ref{lem:orderdslices}, the $(\mathcal{J},\mathcal{K})$-slices have the form  $\mathcal T^{(\mathcal J)}_{i_{\mathcal{J}}} = U^{\#}_{\mathcal{I}} S^{(\mathcal{J})}_{i_{\mathcal{J}}}(U^{\#}_{\mathcal{K}})^{\top}$ for all $j_3 \in [n_3], j_4 \in [n_4]$ such that
\begin{equation}\label{eq:D12u3kronu4}
   S^{(\mathcal{J})}_{i_{\mathcal{J}}} = \sum_{i'_j \in [r_j] \text{ for all } j \in \mathcal{J}} (\prod_{j \in \mathcal{J}} (U^{\#}_j)_{i_j,i'_j}) \mathcal{G}^{(\mathcal{J})}_{i'_{\mathcal{J}}}, 
\end{equation}
where $i'_{\mathcal{J}} $ is the tuple of indices $i'_j$ for all $j \in \mathcal{J}$,  $\mathcal{G}^{(\mathcal{J})}_{i'_{\mathcal{J}}}$ are the $(\mathcal{J},\mathcal{K})$-slices of $\mathcal{G}^{\#}$ and $U^{\#}_{S} = \otimes_{j \in S} U_j^{\#}$ where $S \in \{\mathcal{I},\mathcal{K}\}$.
Hence, $\Big(U^{\#}_{\mathcal{I}}, U^{\#}_{\mathcal{J}},S^{(\mathcal{J})}_{i_{\mathcal{J}}}\Big)$ is a feasible solution to the optimization problem in (\ref{eq:idea3tensororderd}). If $S^{(\mathcal{J})}$ is the matrix with its $i_{\mathcal{J}}$ column as $\vect\Big(S^{(J)}_{i_{\mathcal{J}}}\Big)$, rewriting (\ref{eq:D12u3kronu4}) in matrix form, and using Lemma \ref{lem:orderdslices}, we get that
$
S^{(\mathcal{J})} = \mathcal{G}_{\mathcal{J}} (U^{\#}_{\mathcal{J}})^\top$ 
where $U^{\#}_{\mathcal{J}} = \bigotimes_{j \in \mathcal{J}} U_j$ and $ \mathcal{G}^{\#}_{(\mathcal{J})}$  is the unfolding of $\mathcal{G}^{\#}$ along the $\mathcal{J}$ mode (following the definition at the beginning of 
Section~\ref{app:unfoldorderd}). Then $(\mathcal{G}^{\#}_{\mathcal{J}},U^{\#}_{\mathcal{J}})$ is a feasible solution to the optimization problem in (\ref{eq:idea3tensororderdb}).

Now we want to show that a solution $(U^*_1,\dots, U^*_d).\mathcal{G}^*$ returned by the procedure is essentially unique.

Let $(U^*_{\mathcal{I}},U^*_{\mathcal{K}},S_{\mathcal{J}}^*)$ be another distinct optimal solution to the first step of the optimization problem in (\ref{eq:idea3tensororderd}). Then following the proof of Theorem \ref{thm:nmfmain}, we already get that there exist permutation matrices $\Pi'_{\mathcal{I}},\Pi'_{\mathcal{K}}$ such that $ U^*_{S} = U^{\#}_{S}\Pi'_{S}$ where $U^{\#}_S = \otimes_{i \in S}U_i^{\#}$ for $S \in \{\mathcal{I},\mathcal{K}\}$ and $S_{\mathcal{J}}^* = (\Pi'_{\mathcal{I}})^\top S_i^{\#} \Pi'_{\mathcal{J}}$. Let $S^{\#}$ be the matrix with $i_{\mathcal{J}}$-th column given by $\vect((U^{\#}_{\mathcal{I}})^{\dagger}\mathcal T^{(\mathcal{J})}_{i_{\mathcal{J}}}((U^{\#}_{\mathcal{K}})^\top)^{\dagger})$. Now, following the proof of Theorem \ref{thm:idea3order3} (refer to (\ref{eq:kronpermutation})), we get that
\begin{equation}\label{eq:DD*}
    S^* = (\Pi'_{\mathcal{I}} \otimes \Pi'_{\mathcal{K}})^\top S^{\#}.
\end{equation}
Let $(G^*, U_{\mathcal{J}}^*)$ be another distinct optimal solution to the second step of the optimization problem in  \eqref{eq:idea3tensororderdb} when run on $S^*$. Using (\ref{eq:DD*}), $((\Pi'_{\mathcal{I}} \otimes \Pi'_{\mathcal{K}})^\top \mathcal{G}_{\mathcal{J}}^{\#},   U_{\mathcal{J}}^*)$ is also an optimal solution to the optimization problem in (\ref{eq:idea3tensororderdb}) when run on $S^*$. Then using Theorem \ref{thm:nmfmain}, we get that there exists a permutation matrix $\Pi_{\mathcal{J}}$ such that $G^* = (\Pi'_{\mathcal{I}} \otimes \Pi'_{\mathcal{K}})^\top \mathcal{G}_{\mathcal{J}}^{\#}\Pi'_{\mathcal{J}}$ and $U_{\mathcal{J}}^* = (\otimes_{j \in \mathcal{J}}U_{j}^{\#})\Pi'_{\mathcal{J}}$ .

At the end of Step 3 of Procedure~$d$.3, we have recovered matrices $U_i^*$ for all $i \in S$ and permutation matrix $\Pi^*_{S}$ where $S \in \{\mathcal{I},\mathcal{J},\mathcal{K}\}$ such that 
$$
\otimes_{j \in S}U_j^*  = U_{S}^* \Pi_S^* = (\otimes_{j \in S}U_j^{\#})\Pi'_{S} \Pi_{S}^*.
$$  
Corollary \ref{corr:kronpdtpermd} shows that there exist permutation matrices $\Pi_1,\dots,\Pi_d$ such that $U_i^* = U_i^{\#}\Pi_i$ for $i \in [d]$. It also follows that $\Pi'_S \Pi_S^* = \otimes_{i \in S} \Pi_i$ for $S \in \{\mathcal{I},\mathcal{J},\mathcal{K}\}$. This gives us that $$\mathcal G_{\mathcal J}^* =  (\otimes_{i \in \mathcal{I} \cup \mathcal{K}}\Pi_i)^\top \mathcal{G}_{\mathcal{J}}^{\#}(\otimes_{j \in \mathcal{J}}\Pi_i)$$ using which we can conclude that $\mathcal{G}^* = (\Pi_1^\top,\dots,\Pi_d^\top).\mathcal{G}^{\#}$.
\end{proof}

\paragraph{Two-step procedure}  

Instead of using Procedure~$d$.3, one can also consider the following two-step procedure: 

\begin{enumerate}
    \item 
For any $\lambda_1,\lambda_2,\lambda_3 > 0$, solve the following optimization problem
\begin{align*}\label{eq:optorderdalt}
    \min_{U_i \text{ for } i \in \mathcal{I} \cup \mathcal{K}, S_1 } & |\det(S_1)| + \lambda_1\|U_{\mathcal{I}} - \otimes_{i \in \mathcal{I}} U_i\|_F^2 + \lambda_2\|U_{\mathcal{J}} - \otimes_{k \in \mathcal{K}} U_k\|_F^2 
    \\
   \text{ such that } & \mathcal T^{(\mathcal{J})}_{i_{\mathcal{J}}} = U_{\mathcal{I}} S_1U_{\mathcal{K}}^{\top}, U_i^{\top}e = e \text{ and } U_i \geq 0 \text{ for } i \in \mathcal{I} \cup \mathcal{K}, 
\end{align*}
   to obtain an optimal solution   $U^*_i$ for all $i \in \mathcal{I} \cup \mathcal{K}$ and $S^*_1$. 

\item Form the matrix  $S^*$ to be the matrix with $((i_{\mathcal{I}},i_{\mathcal{K}}),i_{\mathcal{J}})$-th column given by 
\[
\vect\left((\otimes_{i \in \mathcal{I}}U^*_i)^{\dagger}\mathcal T^{(\mathcal{J})}_{i_{\mathcal{J}}}((\otimes_{k \in \mathcal{K}}U^*_k)^\top)^{\dagger}\right), 
\] 
and solve 
\begin{align*}
   \min_{\mathcal{G}_{\mathcal{J}},U_j \text{ for } j \in \mathcal{J}} &\det(\mathcal{G}_{\mathcal{J}}^\top \mathcal{G}_{\mathcal{J}}) + \lambda_3 ||U_{\mathcal{J}} - \otimes_{j \in \mathcal{J}}U_j||^2_F \\ 
    \text{such that }& S^* = \mathcal{G}_{\mathcal{J}}U_{\mathcal{J}}^\top \text{ , }U_j^{\top} e = e \text{ and } U_j \geq 0 \text{ for all } j \in \mathcal{J}, 
\end{align*} 
to obtain an optimal solution $U^*_i$ for $i \in \mathcal J$ and $\mathcal{G}^*_{\mathcal{J}}$. 
\end{enumerate}

By a result similar to Theorem \ref{thm:unfoldsorder3alternate}, one can show that, under Assumption~\ref{ass:orderdslices}, for any $\lambda_1,\lambda_2,\lambda_3 > 0$, an nTD obtained with the above two-step procedure is essentially unique.

\subsubsection{Using random linear combinations of slices} 

The idea of Section~\ref{sec:randomidea2}, to take  random linear combinations of slices instead of just one, generalizes in a similar way to order-$d$ tensors. 
Note that an order-$d$ tensor has $\prod_{i=1}^{d-2} n_i$ many $(d-1,d)$-slices. So if we are just given access to the entire tensor, taking a random linear combination of the slices indeed requires looking at all the entries of the tensor which is computationally heavy. We could of course use a tradeoff and select a subset of slices. 
One could also look at other computational models where the underlying assumptions is that a succinct representation of the tensor is given as input and in those settings, one could access the random linear combination of slices at a much lower cost; see~\cite{Koiran2023,saha:tel-04379539} for more detailed discussions.




\section{Kronecker product of two matrices and the SSC} \label{subsec:KronSSCconj}


In quite a few cases, our identifiability results relied on the Kronecker product of two or more factors to satisfy the SSC, namely 
for order-$3$ nTDs using unfoldings in Theorem~\ref{thm:order2unfold}, 
and for higher-order nTDs in 
Theorems~\ref{thm:unfoldsorderd} for unfoldings and Theorem~\ref{thm:idea3order3} for slices. 
In this section, we discuss sufficient conditions for the Kronecker product of two factors to satisfy the SSC.

As discussed in Section \ref{sec:idenorder3tensors}, 
for order-$3$ nTDs, one has similar identifiability results when the factor matrices individually satisfy the SSC, or when the Kronecker product of two of them satisfies the SSC. One can then ask the following question 
\begin{openquestion}[SSCness of Kronecker product of SSC matrices]\label{conj:KronSSCq1}
Let $U_i \in \R^{n_i\times r_i}$
satisfy the SSC for $i =1,2$. 
Does $U_1 \otimes U_2 \in \R^{n_1n_2 \times r_1r_2}$ satisfy the SSC?  
\end{openquestion}

Unfortunately, we will see that the answer is negative as soon as $\min(r_1,r_2) \geq 3$ and $\max(r_1,r_2) \geq 4$ (Theorem~\ref{cor:counterexample_SSC_kron}), otherwise it is positive  (Corollary~\ref{cor:sscssc_for_ri_less_than_3}). 

For this reason, let us introduce a more constrained variant of the SSC.  
\begin{definition}\label{def:SSCexpanded}[Expanded sufficiently scattered condition ($p$-SSC)] 
Let $U \in \R_+^{n \times r}$, $r\ge 2$ and $p \geq 1$. The matrix $U$ satisfies the $p$-SSC if 
 $$
 \mathcal{C}_p = \left\{x \in \R_+^r \ \big| \ e^{\top}x \geq p \|x\|_2 \right\} \; \subseteq \; \cone\left(U^\top\right). 
 $$
\end{definition} 
\begin{figure}[ht!]
    \centering
    \includegraphics[width=.7\linewidth]{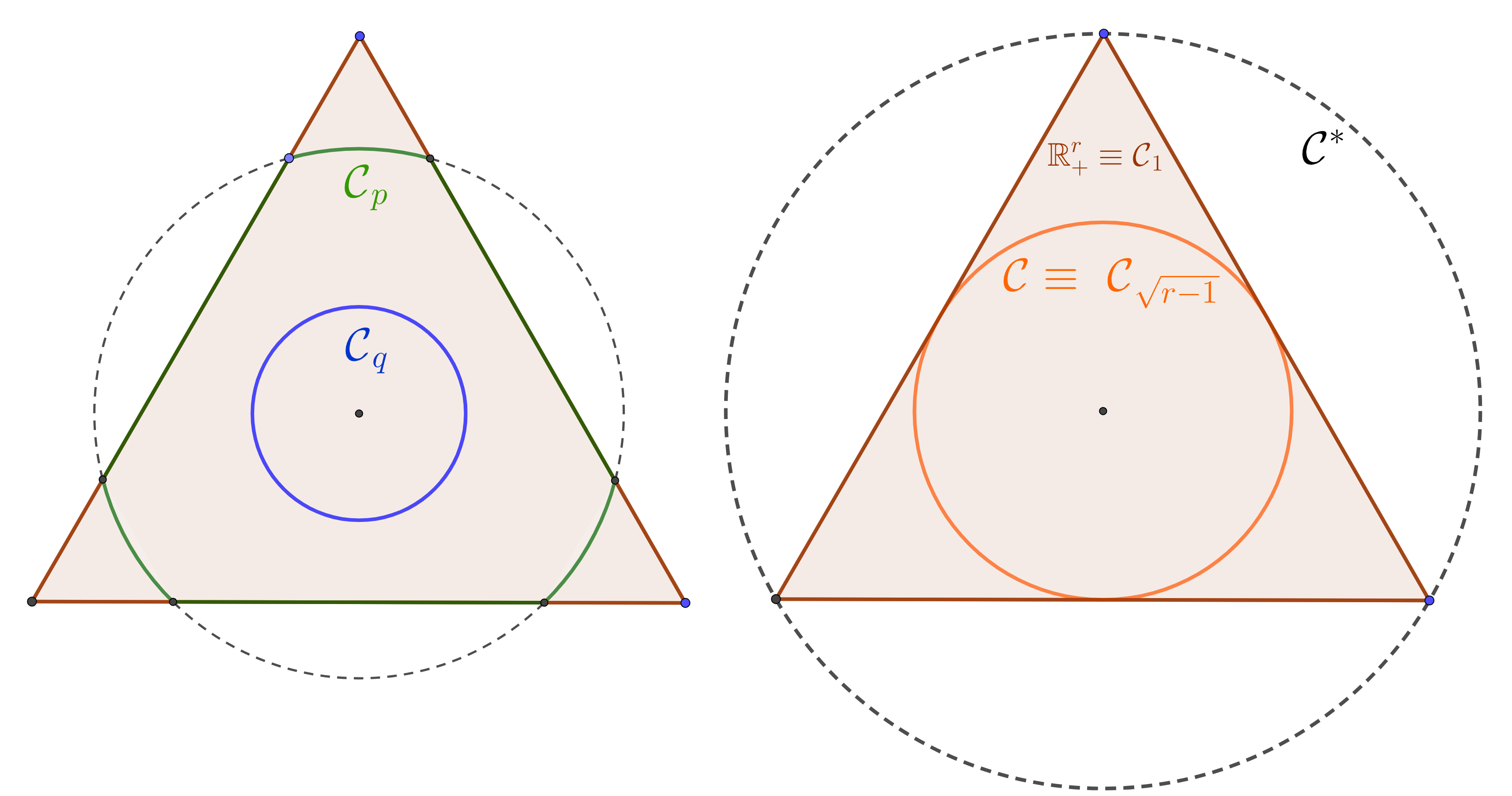}
    \caption{Visualization of $p$-SSC in the case $r=3$ on the plane $\{x\in \f R^r \ | \ e^\top x = 1\}$. 
 On the left, the cones $\mathcal C_p$ with $1< p<\sqrt{r-1}$ and $\mathcal C_q$ with $\sqrt{r-1}< q<\sqrt r$. On the right, the cone $\mathcal C\equiv \mathcal C_{\sqrt {r-1}}$ and the nonnegative orthant $\f R^r_+\equiv \mathcal C_1$.}
    \label{fig:pSSC}
\end{figure}

Intuitively, $p$-SSC of $U$ requires $\cone(U^\top)$ to contain a larger cone than the one required by SSC1 when $p < \sqrt{r-1}$. 
For $p = 1$, $p$-SSC is equivalent to separability because $\mathcal{C}_1 = \R_+^r$. 
For any $p < \sqrt{r-1}$, $p$-SSC implies the SSC, because 
$\mathcal C_q \cu \mathcal C_p \text{ for any } q\ge p$, and 
$\mathcal C = \mathcal C_{\sqrt{r-1}}$, that is, $p$-SSC with $p=\sqrt{r-1}$ is equivalent to SSC1; this is why we need $p$ smaller than $\sqrt{r-1}$ to have SSC2 and hence SSC.  

We can now ask the following question: 
\begin{openquestion}[SSCness of Kronecker product of $p$-SSC matrices]\label{conj:KronSSCq2}
Let $U_i \in \R^{n_i\times r_i}$
satisfy the $p_i$-SSC for $i =1,2$. 
Does $U_1 \otimes U_2 \in \R^{n_1n_2 \times r_1r_2}$ satisfy the SSC?  
\end{openquestion}


Before providing an answer to this question, let us discuss further the $p$-SSC condition. It is linked to the so-called \textit{uniform pixel purity level} $\gamma$ defined in \cite{lin2015identifiability} as follows: 
\[
\gamma := \sup \Big\{ s\le 1 \ \Big| \    B_s \cap \Delta^r \cu \conv\big(U^\top \big) \Big\} \quad \text{ where } \quad B_s= \{x\in \f R^r \ | \ \|x\|\le s \}, 
\]
where $U$ is assumed to be a row-stochastic matrix (w.l.o.g.). 
Since $\cone(B_s\cap \Delta^r) = \mathcal C_{1/s}$, it is immediate to see that  a  row-stochastic matrix $U$ satisfies  $p$-SSC if and only if its uniform pixel purity level is at least $\gamma \ge 1/p$. 

Moreover, 
\begin{itemize}
    \item In \cite{expandedSSC}, it is proved that $U$ satisfies the SSC according to definition \ref{def:SSCalternate} if and only if it satisfies $p$-SSC for some $p^2< r-1$ when $r>2$.
    
    \item  As a consequence, min-vol NMF~\eqref{eq:optminvol} is identifiable if $H$ satisfies $p$-SSC for some $p^2 < r-1$ when $r>2$. 
    
    \item When $r=2$, the separability condition, SSC and $1$-SSC coincide. 
\end{itemize}


\subsection{Kronecker Product of Expanded SSC matrices}

The following theorem provides an answer to Question~\ref{conj:KronSSCq2}. It will  also allow us to answer Question~\ref{conj:KronSSCq1}. 
\begin{theorem}
\label{theo:Kronecker_product_of_pSSC}
Let $U_i \in \R^{n_i\times r_i}$
satisfy the SSC and $p_i$-SSC
for $i = 1,2$. 
Then $U_1 \otimes U_2 \in \R^{n_1n_2 \times r_1r_2}$ satisfies the SSC when
\[
1\le \sqrt{\frac{r_1-p_1^2}{p_1^2(r_1-1)}} + \sqrt{\frac{r_2-p_2^2}{p_2^2(r_2-1)}}.
\]
\end{theorem}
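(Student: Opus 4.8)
The plan is to reduce the statement to a containment of convex cones and then to an extremal computation. Writing $\mathcal{C}_p^{(r)} = \{x \in \R_+^r \mid e^\top x \ge p\|x\|\}$, the $p_i$-SSC hypothesis says exactly that $\mathcal{C}_{p_i}^{(r_i)} \subseteq \cone(U_i^\top)$, while the SSC hypothesis guarantees each $U_i$ (hence $U_1\otimes U_2$) has full column rank. Since the columns of $(U_1 \otimes U_2)^\top = U_1^\top \otimes U_2^\top$ are the Kronecker products $u \otimes v$ of the rows of $U_1$ and $U_2$, and since $a = \sum_i \lambda_i u_i$, $b = \sum_j \mu_j v_j$ with $\lambda,\mu \ge 0$ give $a \otimes b = \sum_{i,j} \lambda_i \mu_j\, u_i \otimes v_j$, I would first establish
\[
\cone\big((U_1 \otimes U_2)^\top\big) \;\supseteq\; \mathcal{K} := \cone\big\{ a \otimes b \;\mid\; a \in \mathcal{C}_{p_1}^{(r_1)},\; b \in \mathcal{C}_{p_2}^{(r_2)} \big\}.
\]
It then suffices to prove that $\mathcal{K}$ contains the ice-cream cone $\mathcal{C}_{\sqrt{r_1 r_2 - 1}}^{(r_1 r_2)}$ required by SSC1; to secure SSC2 as well, I would aim for the slightly stronger $\mathcal{C}_q^{(r_1 r_2)} \subseteq \mathcal{K}$ with $q < \sqrt{r_1 r_2 - 1}$ and invoke the equivalence of~\cite{expandedSSC} between full SSC and $q$-SSC with $q^2 < r_1 r_2 - 1$.

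Second, I would reshape $\R^{r_1 r_2} \cong \R^{r_1 \times r_2}$, under which $a \otimes b \mapsto a b^\top$ and $e_{r_1 r_2} \mapsto J := e_{r_1} e_{r_2}^\top$. Two identities drive the analysis: $e^\top(a \otimes b) = (e^\top a)(e^\top b)$ and $\|a \otimes b\| = \|a\|\,\|b\|$, so the axis angles multiply, $\cos\theta(a\otimes b, e) = \cos\theta(a, e_{r_1})\,\cos\theta(b, e_{r_2})$. Writing $N = r_1 r_2$, the target containment $\mathcal{C}_{\sqrt{N-1}} \subseteq \mathcal{K}$ is equivalent, by duality, to the requirement that every $Y \in \mathcal{K}^{*}$ — that is, every matrix with $a^\top Y b \ge 0$ for all $a \in \mathcal{C}_{p_1}^{(r_1)}$, $b \in \mathcal{C}_{p_2}^{(r_2)}$ — satisfies $\langle J, Y\rangle \ge \|Y\|_F$. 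Equivalently, I must lower bound the half-angle of the widest circular cone around $J$ that fits inside $\mathcal{K}$ and compare it with the SSC half-angle $\beta$ (where $\tan\beta = 1/\sqrt{N-1}$).

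Third, I would locate the worst-case boundary direction of $\mathcal{K}$ and carry out the calculation. Writing each factor cone as the intersection of a circular cone of half-angle $\alpha_i$ (with $\tan\alpha_i = \sqrt{r_i - p_i^2}/p_i$) with the nonnegative orthant, the crucial point — and the reason the SSC of the factors alone fails once $\min(r_1,r_2) \ge 3$ and $\max(r_1,r_2) \ge 4$ — is that the orthant clips the extremal directions: in the pure coordinate-reducing direction $e_{r_i} - e_1$ the attainable radial-to-axial ratio is only $1/\sqrt{r_i - 1} = \tan\beta_i$ irrespective of $p_i$, whereas the free perpendicular directions reach the full $\tan\alpha_i$. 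The binding directions should be the corner directions $e_N - e_{(i,j)}$ where $\mathcal{C}_{\sqrt{N-1}}$ touches the orthant; decomposing such a direction as a conic combination of two Kronecker products — one exploiting factor $1$'s freedom, one factor $2$'s — is feasible precisely when the two clipped radii add up to cover the required displacement, which is exactly
\[
\frac{\tan\alpha_1}{\sqrt{r_1-1}} + \frac{\tan\alpha_2}{\sqrt{r_2-1}} \;=\; \sqrt{\frac{r_1 - p_1^2}{p_1^2(r_1-1)}} + \sqrt{\frac{r_2 - p_2^2}{p_2^2(r_2-1)}} \;\ge\; 1 .
\]

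The main obstacle will be this last extremal step: proving that the corner directions are genuinely the narrowest part of $\mathcal{K}$ — so that the two-term construction does not merely reach those directions but certifies containment of the entire SSC cone — and handling the orthant clipping rigorously rather than heuristically. A clean way to discharge it is to exhibit an explicit dual certificate $Y \in \mathcal{K}^{*}$ attaining equality $\langle J, Y\rangle = \|Y\|_F$ at the threshold $f_1 + f_2 = 1$, which would simultaneously show the condition is sharp. As a consistency check I would specialize to the symmetric case $r_1 = r_2 = r$, $p_1 = p_2 = p$, where the inequality collapses to $p^2 \le 4r/(r+3)$, and verify that this coincides with ordinary SSC ($p^2 < r-1$) exactly at $r = 3$, matching the regime in which the counterexample to Question~\ref{conj:KronSSCq1} first appears.
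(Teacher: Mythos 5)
Your high-level reduction is sound, and it is essentially the same one the paper uses: reshape $v \mapsto V = \mat(v)$, note that SSC1 for $U_1 \otimes U_2$ reads ``$U_1 V U_2^\top \ge 0 \implies e^\top V e \ge \|V\|_F$'', and weaken the hypothesis on $V$ to membership in the dual of the cone generated by Kronecker products $a \otimes b$ with $a \in \mathcal{C}_{p_1}$, $b \in \mathcal{C}_{p_2}$ (your $\mathcal{K}^*$). Your consistency checks are also correct (in the symmetric case the condition becomes $p^2 \le 4r/(r+3)$, which meets $p^2 \le r-1$ exactly at $r=3$). The genuine gap is that the proposal never proves the one statement that constitutes the theorem: that \emph{every} $Y \in \mathcal{K}^*$ satisfies $e^\top Y e \ge \|Y\|_F$ under the stated inequality. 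Containment of the ice-cream cone must be certified on \emph{all} of its continuum of boundary rays, not only on the $r_1 r_2$ tangent directions $e - e_{(i,j)}$ (whose conic hull is a strictly smaller polyhedral cone when $r_1r_2 \ge 3$); you acknowledge this, but your proposed fix --- exhibiting a single dual certificate $Y \in \mathcal{K}^*$ with $\langle J, Y\rangle = \|Y\|_F$ at the threshold --- is logically backwards: one matrix attaining equality establishes \emph{sharpness} of the constant, not the universal inequality. The paper's proof is exactly the missing uniform argument: it tests $V$ against the vectors $\alpha_2 e + e_j$ and $e - e_j$ for every $j$, where $\alpha_i$ is the largest scalar with $\alpha_i e + e_1 \in \mathcal{C}_{p_i}$, so that $(\alpha_i r_i + 1)^{-2} = (r_i - p_i^2)/(p_i^2(r_i-1))$; summing the resulting quadratic inequalities over $j$, and symmetrizing in $U_1 \leftrightarrow U_2$, bounds $x := \|V\|_F^2/(e^\top Ve)^2$ by two affine functions of $y := \|Ve\|^2/(e^\top Ve)^2$ and $z := \|e^\top V\|^2/(e^\top Ve)^2$, and a short max--min optimization over $0 \le y,z \le 1$ shows the bound equals $1$ precisely when the two square roots sum to at least $1$. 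Your ``clipped radii add up'' heuristic identifies the right threshold but is not a substitute for this computation.

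The second gap is SSC2. Your plan --- prove $\mathcal{C}_q \subseteq \mathcal{K}$ for some $q^2 < r_1r_2 - 1$ and invoke the equivalence of \cite{expandedSSC} --- cannot go through on the boundary of the hypothesis: when the two square roots sum to exactly $1$, the containment argument has no slack (your own sharpness certificate would witness this), so no cone strictly larger than $\mathcal{C}_{\sqrt{r_1r_2-1}}$ is obtained, and the equivalence of \cite{expandedSSC} cannot be triggered without already knowing SSC2. The paper instead proves SSC2 by an equality-case analysis of its SSC1 argument: if $U_1VU_2^\top \ge 0$ and $e^\top Ve = \|V\|_F$, all inequalities become equalities; the concave function $g(z) = e^\top Vz - \|Vz\|$ then vanishes identically on a facet of $\cone(U_2^\top)$ whose relative interior contains $e - e_j$, analytic continuation forces the quadratic $z^\top V^\top(ee^\top - I)Vz$ to vanish on each hyperplane $\{z_j = 0\}$, and (outside the separable case $r_1 = r_2 = 2$) $V$ must be rank one, $V = pq^\top$, with $p \in \textbf{bd}(\mathcal{C}^*) \cap \cone^*(U_1^\top)$ and $q \in \textbf{bd}(\mathcal{C}^*) \cap \cone^*(U_2^\top)$, so that SSC2 of the factors forces $p$ and $q$ to be nonnegative multiples of canonical basis vectors. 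Your proposal contains no argument for this equality case, so as written it establishes neither SSC1 nor SSC2.
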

\begin{proof}
First of all, notice that since $U_i$ satisfies the SSC, $p_i^2\le r_i-1$, and if $r_i>2$ then $p_i^2 < r_i-1$. 

The condition SSC1 on  $U_1\otimes U_2$  is equivalent to the condition
\[
(U_1\otimes U_2) v\ge 0 \implies e^\top v\ge \|v\|, 
\]
and by recasting $v$ to a matrix $V\in\f R^{r_1\times r_2}$, that is, $V = \mat(v)$, it can be written as
\[
U_1 VU_2^\top  \ge 0 \implies e^\top V e \ge \|V\|_F.
\]
Let us suppose from now on that $U_1 VU_2^\top \ge 0$. 
Since $U_1$ satisfies $p_1$-SSC with $p_1^2 \leq r_1-1$, 
$\mathcal C \cu  \mathcal C_{p_1} \cu \cone(U_1^\top)$ and $\cone^*(U_1^\top)\cu \mathcal C^* = \mathcal C_1$.
Since  $U_2$ satisfies $p_2$-SSC,  
\begin{align}\label{eq:U1U2SSC}
	\nonumber U_1 VU_2^\top \ge 0 \implies&
    VU_2^\top x\in \cone^*(U_1^\top) \ \ \forall x\ge 0
    \implies 
     Vu_2\in \cone^*(U_1^\top)  \ \ \forall u_2\in \cone(U_2^\top) \\
     \implies &
    e^\top Vu_2\ge \|Vu_2\| \ \ \forall u_2\in \cone(U_2^\top) \implies 
    e^\top Vc\ge \|Vc\| \ \ \forall c\in \mathcal C_{p_2}. 
\end{align}
Take now $\alpha_i\ge 0$ as the largest constant such that $\alpha_i e + e_1\in \mathcal C_{p_i}$. 
Since   $\alpha_i e + e_1\ge 0$, we have $e^\top (\alpha_i e + e_1) = p_i \| \alpha_i e + e_1\|$, and by squaring each term,
\begin{align}\label{eq:alpha_r_+1}
  &  (r_i\alpha_i + 1)^2 = p_i^2(\alpha_i^2r_i + 2\alpha_i + 1) = 
     \frac{p_i^2}{r_i}(\alpha_ir_i + 1)^2 + p_i^2 -  \frac{p_i^2}{r_i}
     \implies  (\alpha_ir_i + 1)^2  
     = p_i^2\frac{r_i-1}{r_i-p_i^2}.
\end{align}
Moreover, notice that for any $e_j$,  $e^\top ( e - e_j)  = r_i-1 \ge   p_i \sqrt{r_i-1} = p_i \|  e - e_j\|$ since by hypothesis $p_i\le \sqrt{r_i-1}$. As a consequence, $e-e_j\in \mathcal C_{p_i}$ and $\alpha_ie+e_j\in \mathcal C_{p_i}$. From \eqref{eq:U1U2SSC}, we find that 
\begin{equation}\label{eq:extremal_points_Cp}
U_1 VU_2^\top \ge 0 \implies     e^\top V(\alpha_2e+e_j)\ge \|V(\alpha_2e+e_j)\|, \quad e^\top V(e-e_j)\ge \|V(e-e_j)\| \quad \forall  j, 
\end{equation}
and thus
\begin{align*}
    (\alpha_2^2+\alpha_2)\|Ve\|^2 + (\alpha_2+1)\left\|Ve_j\right\|^2 &=  
    \left\|V(\alpha_2e+e_j)\right\|^2 + \alpha_2\left\|V(e-e_j)\right\|^2\\
    &\le \left( e^\top V(\alpha_2e+e_j)\right)^2
    + \alpha_2
    \left( e^\top V(e-e_j)\right)^2\\
    & = (\alpha_2^2+\alpha_2)\left( e^\top Ve\right) ^2  + (\alpha_2+1)\left(e^\top Ve_j\right)^2  \quad \forall j, 
\end{align*}
or, written in a simpler way,  
\begin{equation*}
   \alpha_2\|Ve\|^2 + \left\|Ve_j\right\|^2 \le  \alpha_2\left( e^\top Ve\right) ^2  + \left(e^\top Ve_j\right)^2\quad \forall j.
\end{equation*}
By summing the above relations over all indices $j$, we get a bound on the Frobenius norm of $V$: 
\begin{align}\label{eq:bound_on_frob_1}
\| V\|_F^2 & = \sum_j \|Ve_j\|^2
\le  \alpha_2r_2\left( e^\top Ve\right) ^2  + \|e^\top V\|^2 - \alpha_2r_2\|Ve\|^2. 
\end{align}
By applying the same reasoning by swapping $U_1$ and $U_2$, we obtain the symmetric relation
\begin{align}\label{eq:bound_on_frob_2}
\| V\|_F^2 &
\le  \alpha_1r_1\left( e^\top Ve\right) ^2  + \| Ve\|^2 - \alpha_1r_1\|e^\top V\|^2.
\end{align}
Since $e\in \mathcal C_{p_2}$, from \eqref{eq:U1U2SSC} we get that $e^\top Ve\ge \|Ve\|$ and by exchanging $U_1,U_2$ we also get the symmetric relation $e^\top Ve\ge \|e^\top V\|$. We can thus rewrite the conditions \eqref{eq:bound_on_frob_1} and \eqref{eq:bound_on_frob_2} with a change of variables as
\[
\begin{cases}
x = \| V\|_F^2/ \left( e^\top Ve\right) ^2 \\
y = \| Ve\|^2/ \left( e^\top Ve\right) ^2 \\
z = \| e^\top V\|^2/ \left( e^\top Ve\right) ^2 
\end{cases}
\implies 
\begin{cases}
x 
\le  \alpha_2r_2  + z - \alpha_2r_2y = f_2(y,z) \\
x
\le  \alpha_1r_1  + y - \alpha_1r_1z = f_1(y,z)\\
x,y,z \ge 0, \quad y,z\le 1, 
\end{cases}
\]
and, in particular,
\begin{equation}
    \label{eq:x_max_fi}
x\le \max_{0\le y,z\le 1}\min\{f_2(y,z) , f_1(y,z)\}.
\end{equation}
Notice that if $f_2(y,z) < f_1(y,z)$, then to raise the value of the minimum, one can increment $z$ to $\ol z$ until either $f_2(y,\ol z) = f_1(y,\ol z)$ or $\ol z = 1$, and in the second case one can decrease $y$ to $\ol y$ until again $f_2(\ol y,1) = f_1(\ol y,1)$ or $\ol y = 0$. Since $f_1(0,1) = 0 < 1+\alpha_2r_2 = f_2(0,1)$, we conclude that, by this procedure, we will always reach a point $(\ol y,\ol z)$ such that $f_2(\ol y,\ol z) = f_1(\ol y,\ol z)$.
The same reasoning can be repeated when $f_2(y,z) > f_1(y,z)$ to conclude that we have to check only $f_1(y,z) = f_2(y,z)$. Since
\begin{align*}
    f_1(y,z) = f_2(y,z) & \iff (1+\alpha_1r_1)z = (1+\alpha_2r_2)y + \alpha_1r_1 - \alpha_2r_2\\
    \implies f_1(y,z) = f_2(y,z) & = 
 1 + \frac{\alpha_1\alpha_2r_1r_2-1}{1+\alpha_1r_1}(1-y) = 
1 + \frac{\alpha_1\alpha_2r_1r_2-1}{1+\alpha_2r_2}(1-z), 
\end{align*}
we have 
\begin{align*}
  \max_{0\le y,z\le 1}\min\{f_2(y,z) , f_1(y,z)\} &=  \max_{\scriptsize\begin{array}{c}
      0\le y,z\le 1  \\
      f_1(y,z) = f_2(y,z)
  \end{array}} f_1(y,z) \le  \max_{ 0\le y\le 1  } 1 + \frac{\alpha_1\alpha_2r_1r_2-1}{1+\alpha_1r_1}(1-y) \\& = 
  \max\left\{ 1,  1 + \frac{\alpha_1\alpha_2r_1r_2-1}{1+\alpha_1r_1}  \right\}
 = 
  \max\left\{ 1,  \alpha_1r_1\frac{\alpha_2r_2+1}{\alpha_1r_1+1}  \right\}.
\end{align*}
Putting together \eqref{eq:x_max_fi}, \eqref{eq:alpha_r_+1} and the hypothesis, we get
\begin{align*}
    x &\le \max_{0\le y,z\le 1}\min\{f_2(y,z) , f_1(y,z)\}
    \le \max\left\{ 1,  \alpha_1r_1\frac{\alpha_2r_2+1}{\alpha_1r_1+1}  \right\} \\
    &= \max\left\{ 1,  1 + \frac{\alpha_1r_1(\alpha_2r_2+1) -(\alpha_1r_1+1) }{\alpha_1r_1+1}  \right\} \\
    &= \max\left\{ 1,  1 + (\alpha_2r_2+1)[
    1 - (\alpha_2r_2+1)^{-1} - (\alpha_1r_1+1)^{-1}
    ]\right\} \\
    &=
    \max\left\{ 1,  
    1 + \sqrt{p_2^2\frac{r_2-1}{r_2-p_2^2}}\left[
    1 - \sqrt{\frac{r_2-p_2^2}{p_2^2(r_2-1)}} - \sqrt{\frac{r_1-p_1^2}{p_1^2(r_1-1)}}
    \right]\right\} = 1.
\end{align*}
Recalling now the variable change for $x$, we conclude that  $ \|V \|_F^2  \le (e^\top Ve)^2$. By \eqref{eq:U1U2SSC},  we have that  $e^\top Ve\ge \|Ve\|\ge 0$, so finally $e^\top Ve\ge \|V\|_F$, that is,   $U_1\otimes U_2$ satisfies SSC1.

To prove that $U_1\otimes U_2$ satisfies the SSC2, let $v\in \cone^*(U_1^\top \otimes U_2^\top)\cap \textbf{bd}(\mathcal C^*)$ be a nonzero vector. Recasting $v$ as the matrix $V\in \f R^{r_1\times r_2} = \mat(v)$, 
we can rewrite the condition as $U_1VU_2^\top \ge 0$ and $e^\top Ve = \|V\|_F$. Since we already proved that $e^\top Ve\ge \|V\|_F$, we can retrace all the formulas of the proof for SSC1 and substitute  the inequality signs with equality ones. In particular, \eqref{eq:extremal_points_Cp} becomes 
\begin{equation}\label{eq:q_i_are_extremal2}
    \left\|V\left(e-e_j\right)\right\|  =  e^\top V(e-e_j) \quad \text{ for all } j. 
\end{equation}
Let now us analyze the concave function $g(z) := e^\top V z - \|Vz\|$. We know that $g(e-e_j) = 0$ for all $j$ and that $g(u_2)\ge 0$ for all $u_2\in \cone(U_2^\top)$ due to \eqref{eq:U1U2SSC}. 

Since $\cone(U_2^\top)$ is an SSC  nonnegative cone, then $\mathcal C\cu \cone(U_2^\top)\cu \f R^{r_2}_+$. 
The point $e-e_j$ belongs to the boundary of all of the three cones, and in particular it lies on the relative interior part of the  facet $F_j:= \{x\in \f R_+^{r_2}\ | \ x_j = 0\}$ of  $\f R^{r_2}_+$. 
Since $\cone(U_2^\top)$ is polyhedral, and $F_j$ is tangent to $\mathcal C$ in the point $e-e_j$, then necessarily $\cone(U_2^\top)$ must possess a facet $G_j\cu F_j$ such that $e-e_j$ lies on the interior part of $G_j$. 
Therefore, $g(z)$ is a concave nonnegative function on $G_j$ that has a minimum in the interior point $e-e_j$, so $g(z)|_{G_j}\equiv 0$.  
It means that $(e^\top Vz)^2 = \|Vz\|^2  $  or equivalently $ z^\top V^\top ( ee^\top  - I) V z= 0$ for all $z\in G_j$. 
This is a quadratic function, that is in particular analytic, taking value zero in an open set $G_j\cu \{x_j=0\} $, thus implying that  $ z^\top V^\top ( ee^\top  - I) V z= 0$ for all $z$ such that $z_j=0$ by analytic continuation. 
Moreover, for a quadratic function to be zero on $r_2$ distinct hyperplanes, necessarily 
$ z^\top V^\top ( ee^\top  - I) V z\equiv 0$ implies $V^\top V = V^\top ee^\top V$ or $r_2=2$. Doing the same reasoning by inverting $U_1$ and $U_2$ brings us to the conclusion that 
$ y^\top V ( ee^\top  - I) V^\top y= 0$ for all $y$ such that $y_j=0$,  
and, in particular, 
either $V V^\top = V ee^\top V^\top$ or $r_1=2$. 

If  $r_1=r_2=2$, then both $U_1$ and $U_2$ are separable and thus $U_1VU_2^\top \ge 0\implies V\ge 0$ and $e^\top Ve = \|V\|_F\implies V = \lambda e_ie_j^\top$ for some indices $i,j$ and $\lambda \ge 0$. 

Otherwise the matrix $V$ must have rank-$1$ and can be written as $V= pq^\top$, and, since $V\ne 0$, both $p,q$ are nonzero. 
Since   $ z^\top V^\top ( ee^\top  - I) V z= 0$ for all $z$ that have at least a zero entry, if $q_j\ne 0$ and $z = e_j$, 
we get 
\[
 (z^\top q)^2 ((e^\top p)^2 - \|p\|^2) = 0 \implies |e^\top p|  =  \|p\|, 
 \]
and, if needed by changing the sign of both $p,q$, we can conclude that $e^\top p  = \|p\|$ and $p\in \textbf{bd}(\mathcal C^*)$. 
Since $e \in \cone(U_1^\top)$, then by \eqref{eq:U1U2SSC} for any $u_2\in \cone(U_2^\top)$,  we have
 \[
e^\top Vu_2 \ge  \|Vu_2\| \implies e^\top p  \cdot q^\top u_2\ge  \|p\| \cdot | q^\top u_2|  \implies q^\top u_2  \ge | q^\top u_2| \ge 0, 
\] 
 and therefore $q\in \cone^*(U_2^\top)$, and, in particular, $q^\top e\ge 0$. 
Notice that from the symmetric reasoning, we still have that  $ y^\top V ( ee^\top  - I) V^\top y= 0$ for all $y$ that have at least a zero entry, so $|e^\top q| = \|q\|$, and from above we conclude that $e^\top q = \|q\|$ and $q\in \textbf{bd}(\mathcal C^*)\cap \cone^*(U_2^\top)$.  Analogously, we also find that $p\in \textbf{bd}(\mathcal C^*)\cap \cone^*(U_1^\top)$.

  Finally, since $U_1$ and $U_2$ satisfy the SSC2, then both $p$ and $q$ are positive multiples of canonical basis vectors. The matrix $V=pq^\top$ is thus a zero matrix with the exception of a single positive entry, and the original vector $v$ will thus also be a positive multiple of some $e_i$, proving that $U_1 \otimes U_2$ satisfies the SSC2. 
\end{proof}

Theorem \ref{theo:Kronecker_product_of_pSSC} is important because it gives precise bounds on the expansion factors $p_1,p_2$ for the SSC conditions on $U_1$ and $U_2$ to achieve the SSC condition on their product $U_1 \otimes U_2$. An immediate corollary is that if one of the two is separable and the other satisfies the SSC, then the product also satisfies the SSC.

\begin{corollary}\label{cor:sscseparable}
Let $U_1 \in \R^{n_1\times r_1}$ satisfy the SSC and let $U_2 \in \R^{n_2\times r_2}$ satisfy $p_2$-SSC for any 
   \[
p_2^2 \le   \frac{r_2}{1 + (r_2-1)f(r_1)}, \quad f(r_1) = \left(1-\frac{1}{r_1-1}\right)^2.
\]
Then $U_1 \otimes U_2 \in \R^{n_1 n_2 \times r_1 r_2}$ satisfies the SSC. \\
In particular, if $U_1 $ satisfies the SSC, and $U_2 $ is separable, then $U_1 \otimes U_2 $ satisfies the SSC. 

\end{corollary}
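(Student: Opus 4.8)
The plan is to obtain this corollary as a direct specialization of Theorem~\ref{theo:Kronecker_product_of_pSSC}, exploiting the fact that the SSC of $U_1$ is nothing but the $p_1$-SSC with the worst-case parameter $p_1=\sqrt{r_1-1}$. Indeed, SSC1 requires $\mathcal{C}=\mathcal{C}_{\sqrt{r_1-1}}\subseteq\cone(U_1^\top)$, which is exactly $\sqrt{r_1-1}$-SSC; since this is all that the SSC of $U_1$ guarantees in general, I would set $p_1^2=r_1-1$ in the theorem. This is the safe (most conservative) choice: the first summand $g(p):=\sqrt{(r-p^2)/(p^2(r-1))}$ in the theorem's threshold is monotone decreasing in $p$, so taking the largest admissible $p_1$ produces the smallest $g_1$ and hence the strongest requirement on $p_2$, valid uniformly over all $U_1$ satisfying the SSC.

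First I would evaluate the first summand at $p_1^2=r_1-1$:
\[
\sqrt{\frac{r_1-p_1^2}{p_1^2(r_1-1)}}=\sqrt{\frac{r_1-(r_1-1)}{(r_1-1)^2}}=\frac{1}{r_1-1}.
\]
With this value the hypothesis $1\le\sqrt{\frac{r_1-p_1^2}{p_1^2(r_1-1)}}+\sqrt{\frac{r_2-p_2^2}{p_2^2(r_2-1)}}$ of Theorem~\ref{theo:Kronecker_product_of_pSSC} collapses to
\[
\sqrt{\frac{r_2-p_2^2}{p_2^2(r_2-1)}}\ \ge\ 1-\frac{1}{r_1-1}=\sqrt{f(r_1)},
\]
where the last identity uses $f(r_1)=\big(1-\tfrac{1}{r_1-1}\big)^2$ together with $1-\tfrac{1}{r_1-1}\ge 0$ for $r_1\ge 2$. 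Squaring (both sides nonnegative) and solving the resulting linear inequality for $p_2^2$ gives exactly $p_2^2\le\frac{r_2}{1+(r_2-1)f(r_1)}$, the stated hypothesis. This rearrangement is the heart of the argument; since $g$ is decreasing, the inequality $g(p_2)\ge\sqrt{f(r_1)}$ then holds for every admissible $p_2$ below the threshold, so the theorem applies and $U_1\otimes U_2$ satisfies the SSC.

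Before invoking the theorem I would verify that both matrices meet its hypotheses. The factor $U_1$ satisfies the SSC by assumption and $\sqrt{r_1-1}$-SSC as just noted. For $U_2$ I would confirm it satisfies \emph{both} the SSC and the $p_2$-SSC: the latter is assumed, and the former follows whenever $p_2^2<r_2-1$ by the characterization recalled before Theorem~\ref{theo:Kronecker_product_of_pSSC} (the case $r_2=2$ being immediate since there SSC, separability and $1$-SSC coincide). For the ``in particular'' clause, separability of $U_2$ is the same as $1$-SSC and already implies the SSC, so it only remains to check that $p_2=1$ is admissible, i.e.\ $1\le\frac{r_2}{1+(r_2-1)f(r_1)}$, equivalently $f(r_1)\le 1$; since $0\le 1-\tfrac{1}{r_1-1}<1$ for every integer $r_1\ge 2$ we have $f(r_1)<1$, so $p_2=1$ always satisfies the bound and the separable case follows at once. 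The step requiring the most care is precisely this verification that the two inputs genuinely satisfy the full SSC (not merely $p$-SSC) and that using the worst-case $p_1=\sqrt{r_1-1}$ is legitimate; everything else is the monotonicity of $g$ and the elementary identity $1-\tfrac{1}{r_1-1}=\sqrt{f(r_1)}$.
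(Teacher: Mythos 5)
Your proof is correct and takes essentially the same route as the paper's: both specialize Theorem~\ref{theo:Kronecker_product_of_pSSC} at the worst-case parameter $p_1^2 = r_1-1$ (legitimate since SSC1 is exactly $\sqrt{r_1-1}$-SSC and the summand is decreasing in $p_1$), note that the first summand then equals $\frac{1}{r_1-1}$, and check that the stated bound on $p_2^2$ is precisely equivalent to the second summand being at least $\sqrt{f(r_1)} = 1-\frac{1}{r_1-1}$, so the threshold sums to $1$; the separable case $p_2=1$ is handled identically via $f(r_1)<1$. Your additional verification that $U_2$ satisfies the full SSC (which the hypotheses of Theorem~\ref{theo:Kronecker_product_of_pSSC} formally require on top of $p_2$-SSC) is a point the paper's proof passes over silently, so it is a welcome refinement rather than a deviation.
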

\begin{proof}
   Since $U_1$ satisfies the SSC, then it is $p_1$-SSC for some $p_1^2\le r_1-1$.  
   If we now analyze the condition on $p_1$ and $p_2$ given by Theorem \ref{theo:Kronecker_product_of_pSSC}, we find that
   \begin{align*}
        \sqrt{\frac{r_1-p_1^2}{p_1^2(r_1-1)}} + \sqrt{\frac{r_2-p_2^2}{p_2^2(r_2-1)}} &\ge 
        \frac {1}{r_1 -1} + \sqrt{\frac{r_2[1 + (r_2-1)f(r_1)] - r_2}{r_2(r_2-1)}}
        =
        \frac {1}{r_1 -1} + \sqrt{f(r_1)} = 1
   \end{align*}
   and thus $U_1 \otimes U_2 $ satisfies the SSC. Moreover, since $0\le f(r_1)< 1$, then the upper bound on $p_2$ in the hypothesis is at least $1$. In particular, if $p_2=1$ then $U_2$ is separable and  $U_1 \otimes U_2 $ satisfies the SSC. 
\end{proof}

We can now turn to Question \ref{conj:KronSSCq1} and give a positive answer when the dimension $r_i$ are low enough. 

\begin{corollary}
\label{cor:sscssc_for_ri_less_than_3}Let $(r_1,r_2)$ with $r_1=2$ or $r_2=2$ or $r_1=r_2=3$.   For any $U_1\in \f R^{n_1\times r_1}$ and $U_2\in \f R^{n_2\times r_2}$ satisfying the SSC, their Kronecker product $U_1\otimes U_2$ satisfies the SSC. 
\end{corollary}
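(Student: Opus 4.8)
The plan is to obtain this as a direct consequence of Theorem~\ref{theo:Kronecker_product_of_pSSC}, combined with the characterization recalled just before this subsection (from~\cite{expandedSSC}): a matrix $U\in\R^{n\times r}$ satisfies the SSC if and only if it satisfies $p$-SSC for some $p$ with $p^2 < r-1$ when $r>2$, whereas for $r=2$ separability, the SSC, and $1$-SSC all coincide, forcing $p=1$. Writing each $U_i$ as a $p_i$-SSC matrix, it then suffices to check that the sufficient condition
\[
\sqrt{\frac{r_1-p_1^2}{p_1^2(r_1-1)}} + \sqrt{\frac{r_2-p_2^2}{p_2^2(r_2-1)}} \ge 1
\]
holds in each of the three listed regimes. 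Setting $g(r,s) := \sqrt{(r-s)/(s(r-1))}$ with $s=p^2$, the whole verification reduces to elementary monotonicity of $g$ in $s$.

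First I would dispose of the cases $r_1=2$ or $r_2=2$. If $r_1=2$, the SSC forces $p_1=1$, so the first summand equals $g(2,1)=\sqrt{(2-1)/(1\cdot 1)}=1$; since $U_2$ satisfying the SSC guarantees $p_2^2\le r_2-1$, the second summand is real and nonnegative, and the left-hand side is already $\ge 1$. The case $r_2=2$ is symmetric. (Equivalently, one may invoke the ``in particular'' clause of Corollary~\ref{cor:sscseparable}, since an SSC matrix with $r=2$ is separable.)

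For the remaining case $r_1=r_2=3$, I would use that the SSC of each $U_i$ with $r_i=3>2$ gives the \emph{strict} bound $p_i^2 < r_i-1 = 2$. On $0<s<2$ the function $g(3,s)=\sqrt{(3-s)/(2s)}$ is strictly decreasing, with limiting value $g(3,2)=\sqrt{1/4}=\tfrac12$; hence $g(3,p_i^2) > \tfrac12$ for each $i$. Summing, $g(3,p_1^2)+g(3,p_2^2) > \tfrac12+\tfrac12 = 1$, so the hypothesis of Theorem~\ref{theo:Kronecker_product_of_pSSC} is met and $U_1\otimes U_2$ satisfies the SSC.

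The argument is essentially bookkeeping, so there is no real obstacle; the only point requiring care is the strictness in the case $r_i=3$. The bound $p_i^2<r_i-1$ must be strict (as provided by the SSC characterization for $r>2$) to place $g(3,p_i^2)$ strictly above $\tfrac12$. Note that this corollary is sharp: outside these $(r_1,r_2)$ regimes the positive answer to Question~\ref{conj:KronSSCq1} fails, as established by Theorem~\ref{cor:counterexample_SSC_kron}.
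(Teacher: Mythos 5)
Your proof is correct and takes essentially the same route as the paper: both reduce the statement to Theorem~\ref{theo:Kronecker_product_of_pSSC}, handle $r_i=2$ via separability (Corollary~\ref{cor:sscseparable}) or the equivalent direct computation, and verify the numerical condition for $r_1=r_2=3$. One minor correction to your commentary: the strictness $p_i^2<r_i-1$ is \emph{not} needed in the case $r_1=r_2=3$, since the hypothesis of Theorem~\ref{theo:Kronecker_product_of_pSSC} is the non-strict inequality $1\le\cdots$; the paper simply uses $p_i^2\le 2$ (i.e., SSC1 itself, which is $\sqrt{r_i-1}$-SSC), giving each summand $\ge \tfrac{1}{2}$ and the sum $\ge 1$, which suffices.
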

\begin{proof}
    Let $r_1=2$. Since $U_1$ satisfies SSC, then $U_1$ is also separable, and the results follows from Corollary \ref{cor:sscseparable}. The same argument can be repeated for $r_2=2$. 

    The only case left is $r_1=r_2=3$. Since $U_1,U_2$ satisfy SSC, then $p_1^2,p_2^2\le 2$ and the condition in Theorem \ref{theo:Kronecker_product_of_pSSC} reads as
    \begin{align*}
        \sqrt{\frac{r_1-p_1^2}{p_1^2(r_1-1)}} + \sqrt{\frac{r_2-p_2^2}{p_2^2(r_2-1)}} & = 
        \sqrt{\frac{3-p_1^2}{2p_1^2}} + \sqrt{\frac{3-p_2^2}{2p_2^2}}
        \ge 
          \sqrt{\frac{1}{4}} + \sqrt{\frac{1}{4}} = 1,
   \end{align*}
   thus proving that $U_1\otimes U_2$ satisfies the SSC. 
\end{proof}

Sadly, the same result does not hold for all other choices of dimensions $(r_1,r_2)$, as shown in the next section. 

{
}
 
\subsection{Negative answer to Question~\ref{conj:KronSSCq1} for $r_1,r_2 \geq 3$ and $\max\{r_1,r_2\}\geq 4$}	 \label{sec:negative_answer_sscKronconj}

Notice that the definitions for SSC and $p$-SSC only talk about cones, thus we can always normalize all the nonzero rows of $U$ into stochastic vectors. 
For this reason, from now on we focus only on row stochastic matrices. 

The condition SSC1 on  $U_1\otimes U_2$  is equivalent to the condition
\[
(U_1\otimes U_2) v\ge 0 \implies e^\top v\ge \|v\|, 
\]
and by recasting $v$ to a matrix $V\in\f R^{r_1\times r_2}$, that is, $V = \mat(v)$, 
it can be written as
\[
U_1 VU_2^\top  \ge 0 \implies e^\top V e \ge \|V\|_F.
\]
As a consequence, to have a negative answer to Question~\ref{conj:KronSSCq1}, it is enough to look for matrices  $V\in\f R^{r_1\times r_2}$ for which $U_1 VU_2^\top  \ge 0$  and $e^\top V e < \|V\|_F$ when both $U_1$ and $U_2$ satisfy the SSC. 
\begin{lemma}
\label{theo:negative_answer_to_conj} Let $U_i\in \f R_+^{n_i\times r_i}$ where $\cone(U_i^\top)\cu \mathcal C_{p_i}$ for $i=1,2$, $r_1\le r_2$, and
\begin{equation} \label{eq:c1c2v1}
\left(\frac{1}{p_1^2} - \frac 1{r_1}\right)\left(\frac{1}{p_2^2} - \frac 1{r_2}\right) < \frac 1{r_2r_1} \frac {r_1-1}{r_1r_2-1},
\end{equation}
then the product $U_1\otimes U_2$ does not satisfy SSC1. 
\end{lemma}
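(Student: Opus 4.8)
The plan is to disprove SSC1 by exhibiting an explicit witness, i.e.\ a matrix $V\in\mathbb R^{r_1\times r_2}$ with $U_1VU_2^\top\ge 0$ but $e^\top Ve<\|V\|_F$; by the reformulation recalled just above the statement, $v=\vect(V)$ then certifies that $U_1\otimes U_2$ violates SSC1. Since $\cone(U_i^\top)\subseteq\mathcal C_{p_i}\subseteq\mathcal L_{p_i}:=\{x:e^\top x\ge p_i\|x\|\}$, it suffices to build $V$ with $u^\top Vw\ge 0$ for all $u\in\mathcal L_{p_1}$, $w\in\mathcal L_{p_2}$, which automatically forces nonnegativity of every entry $(U_1)_{i,:}V(U_2)_{j,:}^\top$ of $U_1VU_2^\top$.

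First I would pass to orthonormal coordinates adapted to the axis $e$. Put $f_0=e/\sqrt{r_1}$ and extend to an orthonormal basis $f_0,g_1,\dots,g_{r_1-1}$ of $\mathbb R^{r_1}$ with $g_j\perp e$; likewise $h_0=e/\sqrt{r_2}$ and $k_1,\dots,k_{r_2-1}$ in $\mathbb R^{r_2}$. Writing $u=af_0+u_\perp$ with $a=\langle u,f_0\rangle$, membership $u\in\mathcal L_{p_1}$ is equivalent to $a\ge 0$ and $\|u_\perp\|\le\kappa_1 a$, where $\kappa_i:=\sqrt{r_i/p_i^2-1}$ is well-defined because $\mathcal C_{p_i}\neq\varnothing$ forces $p_i^2\le r_i$; analogously for $w=bh_0+w_\perp\in\mathcal L_{p_2}$ with slope $\kappa_2$. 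I would then take, for parameters $\lambda,\mu\ge 0$,
\[
V=\frac{\lambda}{\sqrt{r_1r_2}}\,ee^\top+\mu\sum_{j=1}^{r_1-1}g_jk_j^\top .
\]
This is exactly where the hypothesis $r_1\le r_2$ is used: it guarantees there are at least $r_1-1$ vectors $k_j$, so the map $k_j\mapsto g_j$ is a norm–non-increasing partial isometry filling all $r_1-1$ perpendicular directions of the target space.

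Next I would verify the two required properties by direct computation. Using $e^\top u=\sqrt{r_1}\,a$ and $e^\top w=\sqrt{r_2}\,b$, one gets $u^\top Vw=\lambda ab+\mu\sum_{j=1}^{r_1-1}(g_j^\top u)(k_j^\top w)$, and Cauchy--Schwarz together with $\|u_\perp\|\le\kappa_1 a$, $\|w_\perp\|\le\kappa_2 b$ (the projection of $w_\perp$ onto $\mathrm{span}(k_1,\dots,k_{r_1-1})$ not increasing its norm) yields $u^\top Vw\ge(\lambda-\mu\kappa_1\kappa_2)ab\ge 0$ as soon as $\lambda\ge\mu\kappa_1\kappa_2$. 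Because the rank-one terms are Frobenius-orthogonal and $g_j\perp e$, the cross terms vanish, giving $e^\top Ve=\sqrt{r_1r_2}\,\lambda$ and $\|V\|_F^2=\lambda^2+(r_1-1)\mu^2$. Choosing $\mu=1$ and $\lambda=\kappa_1\kappa_2$ (the smallest value preserving nonnegativity, hence the one that most favours a large $\|V\|_F$ relative to $e^\top Ve$), the desired inequality $\|V\|_F^2>(e^\top Ve)^2$ becomes $(r_1-1)>(r_1r_2-1)\kappa_1^2\kappa_2^2$, i.e.\ $\kappa_1^2\kappa_2^2<\frac{r_1-1}{r_1r_2-1}$; substituting $\kappa_i^2=r_i\big(\tfrac1{p_i^2}-\tfrac1{r_i}\big)$ turns this into precisely the stated hypothesis~\eqref{eq:c1c2v1}. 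Then $e^\top Ve\ge 0$ and $e^\top Ve<\|V\|_F$, so $U_1\otimes U_2$ fails SSC1.

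I expect the only delicate points to be organizational rather than substantial: keeping the two ambient dimensions $r_1\neq r_2$ straight, and justifying that the block $k_j\mapsto g_j$ does not increase norms (which is exactly what $r_1\le r_2$ provides). Once the explicit $V$ above is written down, the Cauchy--Schwarz bound, the Frobenius-norm identities, and the final algebraic reduction to \eqref{eq:c1c2v1} are all routine.
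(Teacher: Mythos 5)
Your proof is correct and is essentially the paper's own argument: your witness $V=\frac{\lambda}{\sqrt{r_1r_2}}ee^\top+\mu\sum_j g_jk_j^\top$ with $\mu=1$, $\lambda=\kappa_1\kappa_2=\sqrt{c_1c_2\,r_1r_2}$ is exactly the paper's $V=\lambda E/\sqrt{r_1r_2}+B$ with $B=Q_1Q_2^\top$, and the final reduction to \eqref{eq:c1c2v1} is the same computation. The only (cosmetic) difference is that you verify $U_1VU_2^\top\ge 0$ via Cauchy--Schwarz in coordinates adapted to $e$ over the whole ice-cream cones, whereas the paper normalizes the rows to be stochastic and bounds each entry using $\|B\|=1$ and $\|u_i-e/r_i\|\le\sqrt{c_i}$ — equivalent estimates.
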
 
\begin{proof}

Call $c_i = \frac{1}{p_i^2} - \frac 1{r_i}$ and notice that we can always consider $U_i$ row stochastic. If $u_i$ is any row of $U_i$, then
\begin{align*}
    \cone(U_i^\top)\cu \mathcal C_{p_i} &\implies 1 = (e^\top u_i)^2\ge p_i^2\|u_i\|^2 = p_i^2(\|u_i-e/r_i\|^2 + \|e/r_i\|^2)\\
    &\implies \|u_i-e/r_i\|^2\le \frac{1}{p_1^2} - \frac 1{r_i} = c_i.
\end{align*}
The hypotheses~\eqref{eq:c1c2v1} can thus be rewritten as
\begin{equation} \label{eq:c1c2}
\max_{i\in [n_1]}\|U_1(i,:)^\top-e/{r_1}\|^2\le c_1,\qquad  
\max_{j\in [n_2]}\|U_2(j,:)^\top-e/{r_2}\|^2\le c_2, 
\qquad 
c_1c_2 < \frac 1{r_2r_1} \frac {r_1-1}{r_1r_2-1}.
\end{equation}

Let now $V = \lambda E/\sqrt{r_1r_2} + B\in \f R^{r_1\times r_2}$ where $E$ is the matrix of all ones, $\lambda\ge \sqrt {c_1c_2r_1r_2}$ and $B$ is any real matrix satisfying
\[
Be = 0,\quad e^\top B = 0, \quad \|B\|= 1, \quad \|B\|_F^2=r_1-1.
\]
This can be realized as $B = Q_1Q_2^\top$, where $Q_1\in \f R^{r_1\times (r_1-1)}$, $Q_2\in \f R^{r_2\times (r_1-1)}$ are matrices with orthonormal columns such that $e^\top Q_1 =0$, $e^\top Q_2 =0$. We can thus compute
\begin{align*}
    U_1VU_2^\top &= \frac \lambda{\sqrt{r_1r_2}} U_1EU_2^\top + U_1BU_2^\top
    =  \frac \lambda{\sqrt{r_1r_2}} E  + \left(U_1-\frac E {r_1}\right)B\left(U_2^\top-\frac E {r_2}\right)
     \\ &\ge  \frac \lambda{\sqrt{r_1r_2}} E  - 
     \left( \max_{i\in [n_1]}\|U_1(i,:)^\top-e/{r_1}\|\max_{j\in [n_2]}\|U_2(j,:)^\top-e/{r_2}\|\right)\|B\| E
    \\ &= \left(\frac \lambda{\sqrt{r_1r_2}} - \sqrt{c_1c_2}\right)E\ge 0,\\
    (e^\top V e)^2 &= \lambda^2 r_1r_2,\\
    \|V\|_F^2 & = \left\| \frac \lambda{\sqrt{r_1r_2}} E \right\|_F^2 + \|B\|_F^2 = \lambda^2 + r_1-1.
\end{align*}
Hence $ U_1VU_2^\top\ge 0$, and $e^\top V e <\|V\|_F$ holds if and only if  $ \lambda^2  < \frac {r_1-1}{r_1r_2-1}$. 
Since $\lambda\ge \sqrt {c_1c_2r_1r_2}$, then for 
\begin{equation*}
    c_1c_2 < \frac 1{r_1r_2} \frac {r_1-1}{r_1r_2-1}
\end{equation*}
we can find a $\lambda$ such that $ U_1VU_2^\top\ge 0$ and $e^\top V e <\|V\|_F$, that is,  the matrix $U_1\otimes U_2$ does not satisfy  SSC1. 
\end{proof}

Lemma~\ref{theo:negative_answer_to_conj} can be used to prove the following result. 
\begin{theorem}
  \label{cor:counterexample_SSC_kron} For $\min\{r_1, r_2\} \geq 3$ and $\max\{r_1,r_2\}\ge 4$, there exist $U_1$ and $U_2$ satisfying SSC while  
   $U_1\otimes U_2$ does not satisfy SSC1.  
\end{theorem}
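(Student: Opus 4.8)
The plan is to invoke Lemma~\ref{theo:negative_answer_to_conj} with expansion parameters $p_1,p_2$ chosen just below the extremal values $\sqrt{r_1-1},\sqrt{r_2-1}$. Assume w.l.o.g.\ $r_1\le r_2$. The lemma produces a violation of SSC1 for $U_1\otimes U_2$ as soon as we can exhibit matrices $U_i\ge 0$ satisfying the SSC whose row cones are tightly concentrated around the center, namely $\cone(U_i^\top)\subseteq\mathcal C_{p_i}$, with $p_i$ close enough to $\sqrt{r_i-1}$ that the lemma's numerical hypothesis holds. So the proof splits into a numerical estimate (verifying the hypothesis) and a geometric construction (producing the $U_i$).

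For the numerical estimate, I would substitute the extremal value $p_i^2=r_i-1$ into the hypothesis of Lemma~\ref{theo:negative_answer_to_conj}. Since $\tfrac1{p_i^2}-\tfrac1{r_i}=\tfrac1{r_i(r_i-1)}$ there, the condition
\[
\Bigl(\tfrac1{p_1^2}-\tfrac1{r_1}\Bigr)\Bigl(\tfrac1{p_2^2}-\tfrac1{r_2}\Bigr)<\frac1{r_1r_2}\,\frac{r_1-1}{r_1r_2-1}
\]
collapses to the purely algebraic inequality $r_1r_2-1<(r_1-1)^2(r_2-1)$. For $r_1=3$ this is $r_2>3$, which holds because $\max\{r_1,r_2\}\ge4$; for $r_1\ge4$ one expands $(r_1-1)^2(r_2-1)-(r_1r_2-1)=r_2(r_1^2-3r_1+1)-(r_1^2-2r_1)$ and checks positivity using $r_1^2-3r_1+1>0$ and $r_2\ge r_1$. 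Thus the strict inequality holds at $(p_1,p_2)=(\sqrt{r_1-1},\sqrt{r_2-1})$, and since the left-hand side is continuous in $(p_1,p_2)$, it persists for all $p_i$ in a left-neighborhood $(\sqrt{r_i-1}-\delta_i,\sqrt{r_i-1})$.

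The heart of the argument is the construction of the $U_i$. Fix $p_i<p_i'<\sqrt{r_i-1}$ with $p_i$ in the neighborhood above. On the simplex slice $\Delta=\{x\ge0\mid e^\top x=1\}$ the cones $\mathcal C_{p_i'}\subseteq\mathcal C_{p_i}$ cut out concentric balls of radii $\sqrt{1/p_i'^2-1/r_i}<\sqrt{1/p_i^2-1/r_i}$, so their round boundaries are separated by a positive gap. I would take
\[
K_i=\R^{r_i}_+\cap\bigcap_{k}\{x\mid a_k^\top x\ge0\},
\]
where the $a_k$ are finitely many supporting hyperplanes of the round part of $\mathcal C_{p_i'}$; keeping the orthant constraints explicit automatically handles the places where $\mathcal C_{p_i'}$ and $\mathcal C_{p_i}$ share the facets of $\R^{r_i}_+$. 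By compactness of $\Delta$ and separation of each nonnegative point of $\Delta\setminus\mathcal C_{p_i}$ from $\mathcal C_{p_i'}$, finitely many such hyperplanes force $\mathcal C_{p_i'}\subseteq K_i\subseteq\mathcal C_{p_i}$. Letting the rows of $U_i$ be stochastic representatives of the extreme rays of the pointed, full-dimensional polyhedral cone $K_i$ gives $\cone(U_i^\top)=K_i$ with $U_i\ge0$ (as $K_i\subseteq\R^{r_i}_+$). Because $K_i\supseteq\mathcal C_{p_i'}$ with $p_i'^2<r_i-1$, the matrix $U_i$ satisfies $p_i'$-SSC and hence the full SSC, by the equivalence of~\cite{expandedSSC} recalled above (applicable since $r_i\ge3>2$); and $\cone(U_i^\top)\subseteq\mathcal C_{p_i}$ is exactly the hypothesis of the lemma.

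Combining the two parts, the matrices $U_1,U_2$ satisfy the SSC, obey $\cone(U_i^\top)\subseteq\mathcal C_{p_i}$, and their parameters meet the numerical condition, so Lemma~\ref{theo:negative_answer_to_conj} yields that $U_1\otimes U_2$ violates SSC1. The main obstacle is precisely the geometric construction: one must squeeze a polyhedral cone into the thin shell $\mathcal C_{p_i'}\subseteq K_i\subseteq\mathcal C_{p_i}$ while still enclosing a round cone strictly larger than $\mathcal C$ (which is what upgrades SSC1 to the full SSC via~\cite{expandedSSC}) and while respecting nonnegativity where the shell pinches against the orthant facets; once the cone is described as $\R^{r_i}_+$ intersected with tangent halfspaces, the compactness and separation argument makes this routine, and the remainder is the elementary inequality $r_1r_2-1<(r_1-1)^2(r_2-1)$ together with a continuity argument.
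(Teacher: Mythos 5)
Your proposal is correct, and its overall skeleton is the same as the paper's: reduce to Lemma~\ref{theo:negative_answer_to_conj}, observe that at the extremal parameters the numerical hypothesis collapses to $r_1r_2-1<(r_1-1)^2(r_2-1)$ (which holds exactly in the claimed range, and fails at $(3,3)$, consistent with Corollary~\ref{cor:sscssc_for_ri_less_than_3}), and then construct SSC matrices whose row cones are squeezed inside $\mathcal C_{p_i}$. Where you genuinely diverge is in the key geometric step. The paper imports the polyhedral approximation of the Lorentz cone from \cite{ben2001polyhedral} (a cone $\mathcal G_\epsilon^r$ with $\mathcal L_\delta^r\subseteq \mathcal G_\epsilon^r\subseteq \mathcal L_\epsilon^r$, transported by the map $QD$ and intersected with the orthant), whereas you build the sandwiched polyhedral cone $\mathcal C_{p_i'}\subseteq K_i\subseteq \mathcal C_{p_i}$ directly: finitely many tangent hyperplanes of the inner round cone suffice by a compactness/separation argument on the simplex slice, the positive gap between the two radii $\sqrt{1/p_i'^2-1/r_i}<\sqrt{1/p_i^2-1/r_i}$ being what makes every point of the (closed) excess region strictly separable from $R_{p_i'}$. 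Your route is more elementary and self-contained, avoiding the external citation; it also replaces the paper's explicit parameter choice ($c_ir_i(r_i-1)=\lambda$ with $\lambda$ a fourth root) by a continuity argument, which is equally valid. What the paper's route buys in exchange is quantitative control: the Ben-Tal--Nemirovski construction bounds the number of facets, hence the number of rows $n_i$ of the counterexample matrices (see~\eqref{eq:valuesofn1n2counterex}), while your compactness argument gives no estimate on how many extreme rays $K_i$ has. Both arguments conclude the full SSC of $U_i$ the same way, via the equivalence of SSC with $p$-SSC for some $p^2<r_i-1$ from \cite{expandedSSC}, so that step is shared.
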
 
\begin{proof}Suppose w.l.o.g.\ that $r_1\le r_2$. 
 Let us look for $U_1, U_2, c_1, c_2$   satisfying \eqref{eq:c1c2} with $U_1, U_2$ SSC, so that 
   $U_1\otimes U_2$  automatically does not satisfy  SSC1 by Lemma~\ref{theo:negative_answer_to_conj}. 

If $U_1$ satisfies SSC1, then there exists a column $u_1$ of $U_1^\top$ that is also an extreme ray for $\cone(U_1^\top)$ that does not lie in $\mathcal C$,  and thus 
\begin{equation*}\label{eq:bound_c1_distance_from_center}
    c_1\ge \left\| u_1 - \frac e{r_1}\right\|^2 = \|u_1\|^2 - \frac 1{r_1} > \frac 1{r_1-1}- \frac 1{r_1} = \frac 1{r_1(r_1-1)}.
\end{equation*}
Analogously,   there exists a column $u_2$ of $U_2^\top$ that is a extreme ray of $\cone(U_2^\top)$ and respects $c_2\ge \|u_2-e/{r_2}\|^2> 1/(r_2(r_2-1))$.  
Therefore a necessary condition for the existence of $c_1, c_2$ satisfying \eqref{eq:c1c2} is 
\begin{equation}\label{eq:r1r2_counterexample_condition}
 \frac 1{r_2(r_2-1)}\frac 1{r_1(r_1-1)}
< c_1c_2 <\frac 1{r_2r_1} \frac {r_1-1}{r_1r_2-1}
\iff 
r_1r_2-1
< (r_1-1)^2(r_2-1).
\end{equation}
It is easy to see that for $r_1\ge 3$ and $r_2\ge 4$,  the last relation of  \eqref{eq:r1r2_counterexample_condition} is satisfied. If we impose 
$c_ir_i(r_i-1)=\lambda$ for $i=1,2$, then $c_1,c_2$ satisfy  \eqref{eq:r1r2_counterexample_condition} whenever
\[
1
< \lambda^2 <
\frac{\frac 1{r_2r_1} \frac {r_1-1}{r_1r_2-1}}
{ \frac 1{r_2(r_2-1)}\frac 1{r_1(r_1-1)}} = 
\frac {(r_2-1)(r_1-1)^2}{r_1r_2-1}, 
\]
which holds for example for  
\[
\lambda = \sqrt[4]{ 
\frac {(r_2-1)(r_1-1)^2}{r_1r_2-1}}.
\] 

To conclude, we need to build $U_i$ satisfying the SSC such that any row $u_i$, and in particular any vertex of $\cone(U_i^\top)$, satisfies $\|u_i-e/{r_i}\|^2 \le  c_i$.  In~\cite{ben2001polyhedral},  it is proved that for any $\epsilon > \delta> 0$ there exists a polyhedral cone $\mathcal G_\epsilon^r$ such that
\[
 \mathcal L^r_\delta \cu \mathcal G_\epsilon^r\cu \mathcal L^r_\epsilon,
\qquad \mathcal L^r_\gamma := \{(x,t)\in \f R^r\ |\ (1+\gamma)t\ge \|x\| \}.
\]
If $Q\in \f R^{r\times r}$ is any orthogonal matrix mapping $e_1$ into $e/\sqrt r$, and $D\in \f R^{r\times r}$ a diagonal matrix with all diagonal elements equal to $\frac 1{\sqrt{r-1}}$ except for the first equal to $1$, then it can be easily proved that $QD\mathcal L^r_{0} = \mathcal C$, and, more generally, that  
\[
y\in QD\mathcal L^r_{\gamma}, \quad e^\top y = 1 
\;  \implies \; \|y-e/{r}\|^2\le \frac{(1+\gamma)^2 }
{r(r-1)}
\;\implies \; 
QD\mathcal L^r_{\gamma}\cap \f R_+^r = \mathcal C_{\sqrt{\frac{r(r-1)}{(1+\gamma)^2+r-1}}}
.
\]
If now we set $\epsilon$ such that $(1+\epsilon)^2 = \lambda$ and $\cone(U_i^\top)= QD\mathcal G_\epsilon^{r_i}\cap \f R^{r_i}_+$, we conclude that 
\[
\mathcal L^{r_i}_\delta \cu \mathcal G_\epsilon^{r_i}\cu \mathcal L^{r_i}_\epsilon
\implies 
\mathcal C\subsetneq\mathcal C_{\sqrt{\frac{{r_i}({r_i}-1)}{(1+\delta)^2+{r_i}-1}}} \cu \cone(U_i^\top) \cu 
QD\mathcal L^{r_i}_{\epsilon}. 
\]
In particular, any row $u_i$ of $U_i$ satisfies $\|u_i-e/{r_i}\|^2\le \frac{\lambda }{r_i(r_i-1)} = c_i$, and $U_i$ is SSC since it is $p_i$-SSC with $p_i^2=\frac{{r_i}({r_i}-1)}{(1+\delta)^2+{r_i}-1}<r_i-1$. 

\end{proof}

The last result does not present an explicit construction for $U_1$ and $U_2$ that negates Question~\ref{conj:KronSSCq1}, but it proves its existence.
The proof relies on the construction of \cite[Proposition 3.1]{ben2001polyhedral} that also gives us a lower bound on the number of rows for an SSC matrix $U_i\in \f R^{n_i\times r_i}$. Recall that by Theorem \ref{theo:negative_answer_to_conj}, we need that $\mathcal C = \mathcal C_{\sqrt{r_i-1}}\cu \cone(U_i^\top)\cu \mathcal C_{p_i}$.
If $\sqrt{r_i-1} - p_i$ is quite small then $\mathcal C_{p_i}$ and $\mathcal C_{\sqrt{r_i-1}}$ are very close and every point on the boundary of $\mathcal C_{p_i}$ must be close enough to an extreme ray of $\cone(U_i^\top)$. Once we have an upper bound $\delta$ on this distance, we find that the union of the balls with center a vertex of $\cone(U_i^\top)$ and radius $\delta$ must cover the surface of $\mathcal C_{p_i}$. As a consequence, we can compute a lower bound on the number of the balls (and thus the vertices) needed for the covering. Following the proof of Theorem~\ref{cor:counterexample_SSC_kron}, one obtains
\begin{equation} \label{eq:valuesofn1n2counterex}
n_i = \left(\frac 1\epsilon\right)^{\Omega(r_i)} = \left(\frac 1{\lambda -1}\right)^{\Omega(r_i)}
 = \left(\sqrt[4]{ 
\frac {(r_2-1)(r_1-1)^2}{r_1r_2-1}} -1\right)^{-\Omega(r_i)}.
\end{equation}


Theorem \ref{theo:negative_answer_to_conj} thus ensures that whenever  $ \min\{r_1, r_2\}\ge 3$ and $\max\{r_1,r_2\}\ge 4$,  we can always generate $U_1$ and $U_2$ that satisfy the SSC, 
and whose Kronecker product does not satisfy the SSC1.

 {

}

\subsection{Numerical experiment}

As shown in Theorem~\ref{cor:counterexample_SSC_kron}, for $\min(r_1,r_2) \geq 3$ and $\max(r_1,r_2) \geq 4$, the Kronecker product of two sufficiently scattered matrices with $r_1$ and $r_2$ columns does not necessarily satisfy the SSC. 
However, this is a worse case scenario where $n_1$ and $n_2$ need to be very large in the construction of our counter examples, see~\eqref{eq:valuesofn1n2counterex}, and we have observed experimentally that this Kronecker product often satisfies the SSC. 
In fact, when we started to study this question, we were trying to find counter examples numerically, generating randomly matrices satisfying the SSC, but we were never able to generate a counter example with such a random generation process. 
In particular, we randomly generated $U_1$ and $U_2$ of dimension $20 \times 4$, 
each of which have two non-zeros per row, as done in  \cite[Section 5.1]{Gillis_2024}. 
Using the algorithm to check the SSC criterion \cite[Algorithm~1]{Gillis_2024}, in 13 out of the 20 cases, both $U_1$ and $U_2$ satisfy the SSC. 
In each of these 13 cases, their Kronecker product, $U_1 \otimes U_2 \in \R^{400 \times 16}$, also satisfied the SSC.

\subsection{Future Directions}

Since the Kronecker product of randomly generated  sufficiently scattered  matrices appear to often satisfy the SSC, it would be interesting to explain this behavior theoretically (e.g., using some well-defined generation process for $U_1$ and $U_2$, and quantifying the probability for their Kronecker product to satisfy the SSC). However, this is a hard question because checking the SSC is NP-hard in general~\cite{huang2013non}.  

For tensors of order larger than 3, we might need the product of more than two factors to satisfy the SSC; see Theorems~\ref{thm:unfoldsorderd} and~\ref{thm:orderdslices1}. 
Theorem~\ref{theo:Kronecker_product_of_pSSC} cannot be used in this context: it only guarantees the Kronecker product to be SSC, not $p$-SSC, which would be needed to use Theorem~\ref{theo:Kronecker_product_of_pSSC} recursively. 
Ideally, we would need to show that 
the Kronecker product between two matrices, one satisfying $p_1$-SSC and the other satisfying $p_2$-SSC, satisfy $p$-SSC for some $p$ depending on $p_1$ and $p_2$ (note that, for $p_1=p_2=1$, $p=1$; this is the separable case.)  This is a topic for further research.

\section{Conclusion} 

In this paper, we provided new, stronger, identifiability results for nTDs, $\mathcal T = (U_1,\dots,U_d).\mathcal G$ where $U_i \geq 0$ for $i\in [d]$. 
They relied on two main conditions: 
\begin{enumerate}

    \item The factor matrices, $U_i$ for $i \in [d]$, need to satisfy some sparsity conditions, namely the separability condition (Definition~\ref{def:separablematricesfirst}), the sufficiently scattered condition (the SSC, Definition~\ref{def:SSCalternate}), or their product must satisfy the SSC (see Section~\ref{subsec:KronSSCconj}). 
    
    \item The core tensor, $\mathcal G$, needs to have some slices or unfoldings of maximum rank, and its dimensions need to satisfy some conditions (e.g., it must be square for order-2 nTDs). 
    
\end{enumerate} 
Under such conditions, our identifiability results relied on minimizing some measure of the volume of the core tensor, while combining    
two main ingredients: the identifiability of min-vol NMF  (Theorem~\ref{thm:idenminvol} from~\cite{fu2018identifiability}),  
and of min-vol order-2 nTDs (which is new, see Theorem~\ref{thm:nmfmain}). For order-$3$ tensors, these results are summarized in Table~\ref{tab:summary3}, and they generalize to any order (Section~\ref{app:orderd}).   \\

In Part II of this paper~\cite{sahaPartII2025}, we will discuss how to compute these identifiable solutions, and compare the different variants, both in terms of computational cost and practical effectiveness to recover the hidden factors. We will compare our algorithms to the state of the art on synthetic and real data sets.

\appendix

\section{Identifiability of NTD under separability assumptions} \label{app:separab}

Since separability implies the SSC, our results apply to the more constrained case when the $U_i$'s are separable. In this section, we explicitly state two such results for completeness. 


\subsection{Generalization of Proposition 1 from \cite{agterberg2024estimating}}

One can consider a generalization of Theorem~\ref{thm:agterberg2024}, and give an identifiability result for the following model: 
\begin{equation}\label{eq:agterberggen}
    \mathcal T = (U_1,\dots,U_d).\mathcal{G} 
    \quad \text{ with } U_i \in \mathbb R_+^{n_i \times r_i}  \text{ separable  for } i \in [d], \text{ and } \mathcal{G} \in R^{r_1 \times \cdots \times r_d}. 
\end{equation}  
In the following theorem, we show that the identifiability result from \cite{agterberg2024estimating} can be generalized to any order-$d$ tensor. 

\begin{theorem}[Generalization of Theorem \ref{thm:agterberg2024}]\label{thm:agterberg2024gen}
Let $\mathcal T\in~\R^{n_1 \times \dots \times n_d}$ be an order-$d$ tensor that admits a decomposition of the form (\ref{eq:agterberggen}). Moreover if for all $k \in [d]$, the unfolding of the tensor $\mathcal T$ along the $k$th mode, denoted by $\mathcal T_{(k)}$, has rank $r_k$, then for any other decomposition of the form $\mathcal T = (U^*_1,\dots,U^*_d).\mathcal{G}^*$ where  $U^*_i $ are separable matrices, there exist permutation matrices $\Pi_i$ such that $U_i^* = U_i \Pi_i$ and $\mathcal{G}^* = (\Pi_1^\top,\dots,\Pi_d^\top).\mathcal{G}$. 
\end{theorem}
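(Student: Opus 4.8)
The plan is to reduce the order-$d$ problem, one mode at a time, to the identifiability of separable NMF (Theorem~\ref{thm:idenseparableNMF}), mirroring the order-$3$ argument behind Theorem~\ref{thm:agterberg2024}. First I would fix a mode $k \in [d]$ and unfold along it. Applying Property~\ref{lem:12flatenningchangeofbasisorderd} with $\mathcal{I} = \{k\}$ (so that $\mathcal{I}^- = [d]\setminus\{k\}$) gives
\[
\mathcal{T}_{(k)} = \Big(\bigotimes_{j \neq k} U_j\Big)\mathcal{G}_{(k)}\, U_k^\top,
\]
which I read as a factorization $\mathcal{T}_{(k)} = W_k H_k^\top$ with separable factor $H_k = U_k$ and $W_k = (\bigotimes_{j\neq k} U_j)\,\mathcal{G}_{(k)}$. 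Since $U_k$ is separable it has full column rank $r_k$, and by hypothesis $\rank(\mathcal{T}_{(k)}) = r_k$; hence this is a separable NMF of size $r_k = \rank(\mathcal{T}_{(k)})$. The factor $W_k$ need not be nonnegative (because $\mathcal{G}$ is arbitrary), but by Remark~\ref{rem:nonneg} this causes no difficulty.

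Next I would feed the second decomposition into the same mould: unfolding $\mathcal{T} = (U_1^*,\dots,U_d^*).\mathcal{G}^*$ along mode $k$ produces $\mathcal{T}_{(k)} = W_k^* (U_k^*)^\top$ with $U_k^*$ separable and $W_k^* = (\bigotimes_{j\neq k} U_j^*)\,\mathcal{G}_{(k)}^*$, another separable NMF of size $r_k$. Theorem~\ref{thm:idenseparableNMF} then furnishes a permutation matrix $\Pi_k$ and a diagonal matrix $D_k$ with $U_k^* = U_k D_k \Pi_k$. Running this over all $k \in [d]$ yields the factor relations simultaneously.

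It remains to pin down the core tensor. Since every $U_i$ is separable, hence full column rank, one has $U_i^\dagger U_i = I_{r_i}$, so from $\mathcal{T} = (U_1,\dots,U_d).\mathcal{G}$ we get $\mathcal{G} = (U_1^\dagger,\dots,U_d^\dagger).\mathcal{T}$. Substituting $\mathcal{T} = (U_1^*,\dots,U_d^*).\mathcal{G}^*$ and using $U_i^\dagger U_i^* = U_i^\dagger U_i D_i \Pi_i = D_i \Pi_i$ gives $\mathcal{G} = (D_1\Pi_1,\dots,D_d\Pi_d).\mathcal{G}^*$, equivalently $\mathcal{G}^* = (\Pi_1^\top D_1^{-1},\dots,\Pi_d^\top D_d^{-1}).\mathcal{G}$. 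This is essential uniqueness up to permutation and scaling; the diagonal factors $D_i$ are the unavoidable scaling ambiguity inherited from separable NMF, and they collapse to the identity once the normalization $U_i^\top e = (U_i^*)^\top e = e$ is imposed, recovering the stated form $U_i^* = U_i \Pi_i$ and $\mathcal{G}^* = (\Pi_1^\top,\dots,\Pi_d^\top).\mathcal{G}$.

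I expect the only genuinely delicate point to be verifying that each single-mode unfolding is a bona fide separable NMF whose size equals its rank, so that Theorem~\ref{thm:idenseparableNMF} applies verbatim: this rests on the hypothesis $\rank(\mathcal{T}_{(k)}) = r_k$ together with the full column rank of the separable factors, and on the fact (Remark~\ref{rem:nonneg}) that the non-separable factor $W_k$ may be sign-indefinite. Everything else is the routine bookkeeping of permutations and pseudoinverses already present in the order-$3$ proofs.
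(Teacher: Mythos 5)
Your proof follows essentially the same route as the paper's: unfold along each mode via Property~\ref{lem:12flatenningchangeofbasisorderd}, apply separable NMF identifiability (Theorem~\ref{thm:idenseparableNMF}) to each single-mode unfolding, and recover the core tensor through pseudo-inverses of the full-column-rank separable factors. If anything, your write-up is slightly more careful on two points the paper glosses over: you explicitly invoke Remark~\ref{rem:nonneg} to justify applying separable NMF identifiability when the left factor $(\bigotimes_{j\neq k}U_j)\mathcal{G}_{(k)}$ may have negative entries, and you make explicit that the diagonal scalings $D_i$ disappear only under a normalization such as $U_i^\top e = e$, whereas the paper dismisses them with the terser (implicitly normalized) claim that $U_k \in [0,1]^{n_k \times r_k}$ forces $D_k = I$.
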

\begin{proof}
Recall from Property \ref{lem:12flatenningchangeofbasisorderd}, the unfolding of a tensor $\mathcal T$ along the mode $k$, denoted by $\mathcal T_{(k)}$ has a decomposition of the form
\begin{equation}
    \mathcal{T}_{(k)} 
    = 
    \Big( \bigotimes_{j \neq k} U_j \Big) \mathcal{G}_{(k)} U_k^\top.
\end{equation}
Using Theorem \ref{thm:idenseparableNMF}, we get that for any other decomposition of $\mathcal{T}_{(k)}$ of the form $\mathcal{T}_{(k)} = W_k^* (U_k^*)^\top$ such that $U_k^*$ is a separable matrix, there exists a permutation matrix $\Pi_k$ and diagonal matrix $D_k$ such that $W_k^* = (\bigotimes_{j \neq k} U_j) \mathcal{G}_{(k)}D_k \Pi_k$ and $U_k^* = U_k D_k^{-1} \Pi_k$ for all $k \in [d]$. Moreover, since $U_k \in [0,1]^{n_k \times r_k}$, $D_k$ is also the identity matrix of size $r_k$.

Since $U_k^*$ and $U_k^{\#}$ are separable for all $k \in [d]$, they have full column rank. Then $(U_k^*)^{\dagger} U_k^* = (U_k^{\#})^{\dagger} U_k^{\#} = I_{r_k}$ and from this, we can conclude that  $$\mathcal{G}^* = ((U_1^*)^{\dagger},\dots,(U_d^*)^{\dagger}).\mathcal{T} =  (\Pi_1^\top,\dots,\Pi_d^\top).((U_1^{\#})^{\dagger},\dots,(U_d^{\#})^{\dagger}).\mathcal{T} =  (\Pi_1^\top,\dots,\Pi_d^\top).\mathcal{G}^{\#}.$$
\end{proof}

\subsection{Unfoldings along one mode in the separable case}\label{sec:seporderd} 

\begin{assumptions}\label{ass:orderdunfoldsep}
Let $\mathcal{T}$ be an order-$d$ tensor such that it has a TD of the form $\mathcal{T} = (U^{\#}_1,\dots,U^{\#}_d).\mathcal{G}$ where
    \begin{itemize}
        \item $U^{\#}_i \in \R_+^{n_i \times r_i}$ is separable for all $i \in [d]$. 
        
        \item There exists $\mathcal{I} \subset [d]$ such that $\rank(\mathcal{G}_{\mathcal{I}})  = \prod_{i \in \mathcal{I}} r_i = \prod_{j \in \mathcal{I}^{-}} r_j$.
    \end{itemize}
\end{assumptions}

Following the proof of Theorem~\ref{thm:unfoldsorderd}  and using the fact that if $U_1,U_2$ are separable matrices, then $U_1 \otimes U_2$ is separable (this is a direct consequence of Corollary~\ref{cor:sscseparable}), one can show that if the given tensor satisfies 
Assumption~\ref{ass:orderdunfoldsep}, then 
Procedure~$d$.0 returns an \textit{essentially} unique nTD. Note however that using separability allows one to design more effective algorithms (running in polynomial time with provable guarantees, even in the presence of noise); see Part II of this paper~\cite{sahaPartII2025}.

\small 

\bibliographystyle{spmpsci} 
\bibliography{references}

\end{document}